\pgfplotsset{compat=1.18}
\newtheorem{theorem}{Theorem}
\newtheorem{remark}[theorem]{Remark}
\newtheorem{assumption}{Assumption}
\newtheorem{definition}{Definition}%
\DeclareMathOperator{\curl}{curl}
\DeclareMathOperator{\rank}{rank}
\newcommand{\FEM}{\scalebox{.7}{\rm FE}}
\newcommand{\ODE}{\scalebox{.7}{\rm ODE}}
\newcommand{\POD}{\scalebox{.7}{\rm POD}}
\newcommand{\DEIM}{\scalebox{.7}{\rm DEIM}}
\newcommand{\sPOD}{\scalebox{.5}{\rm POD}}
\newcommand{\sDEIM}{\scalebox{.5}{\rm DEIM}}
\DeclareMathAlphabet{\mathpzc}{OT1}{pzc}{m}{it}
\newcommand{\cEl}{\scalebox{1.3}{$\mathpzc{E}$}}
\newcommand{\cBl}{\scalebox{1.3}{$\mathpzc{B}$}}
\newcommand{\cCl}{\scalebox{1.3}{$\mathpzc{C}$}}
\newcommand{\cAl}{\scalebox{1.3}{$\mathpzc{A}$}}
\begin{document}

\title[Regularization and passivity-preserving model reduction of quasilinear MQS problems]{Regularization and passivity-preserving model reduction of quasilinear magneto-quasistatic coupled problems}

\author[1]{\fnm{Johanna} \sur{Kerler-Back}}

\author[2]{\fnm{Timo} \sur{Reis}}

\author*[3]{\fnm{Tatjana} \sur{Stykel}}\email{tatjana.stykel@uni-a.de}

\affil[1]{
\orgdiv{Institut f\"ur Mathematik}, 
\orgname{Universit\"at Augsburg}, 
\orgaddress{\street{Universit\"atsstra{\ss}e~12a}, 
\postcode{86159}, 
\city{Augsburg}, 
\country{Germany}}}

\affil[2]{
\orgdiv{Institut f\"ur Mathematik}, 
\orgname{Technische Universit\"at Ilmenau}, \\
\orgaddress{\street{Weimarer Stra{\ss}e~25}, 
\postcode{98693}, 
\city{Ilmenau}, 
\country{Germany}}}

\affil*[3]{
\orgdiv{Institut f\"ur Mathematik \& Centre for Advanced Analytics and Predictive Sciences (CAAPS)}, \orgname{Universit\"at Augsburg}, 
\orgaddress{\street{Universit\"atsstra{\ss}e~12a}, 
\postcode{86159}, 
\city{Augsburg}, 
\country{Germany}}}

\abstract{
We consider the quasilinear magneto-quasistatic field equations that arise in the simulation of low-frequency electromagnetic devices coupled to electrical circuits. Spatial discretization of these equations on 3D~domains using the finite element method results in a singular system of differential-algebraic equations (DAEs). First, we analyze the structural properties of this system and present a novel regularization approach based on projecting out the singular state components. Next, we explore the passivity of the variational magneto-quasistatic problem and its discretization by defining suitable storage functions. For model reduction of the magneto-quasistatic system, we employ the proper orthogonal decomposition (POD) technique combined with the discrete empirical interpolation method (DEIM), to facilitate efficient evaluation of the system's nonlinearities. Our model reduction approach involves the transformation of the regularized DAE into a system of ordinary differential equations, leveraging a special block structure inherent in the problem, followed by applying standard model reduction techniques to the transformed system.
We prove that the POD-reduced model preserves passivity, and for the POD-DEIM-reduced model, we propose to enforce passivity by perturbing the output in a way that accounts for DEIM errors. Numerical experiments illustrate the effectiveness of the presented model reduction methods and the passivity enforcement technique.
}

\keywords{
    magneto-quasistatic systems,
	differential-algebraic equations,
	passivity,
	regularization,
	model order reduction,
	proper orthogonal decomposition,
	discrete empirical interpolation,
	passivity enforcement  
 }

\pacs[MSC Classification]{
12H20, 
15A22, 
34A09, 
37L05, 
78A30, 
93A15, 
93C10  
}

\maketitle

\section{Introduction} 

Maxwell's equations describe the dynamic behavior of electromagnetic
systems by relating the electric and magnetic fields in a~medium. Together with material-dependent constitutive relations for field intensities and flux densities, they form the basis for all electromagnetic phenomena \cite{Boss98,Grif17,Jack99}.

Assuming that the contribution of the displacement currents is negligible compared to the conductive currents, the magnetic field can be described by magneto-quasistatic (MQS) equations, which can be considered as an~approximation to Maxwell's equations. Such an~assumption is justified, for example, for electromagnetic devices operating at low frequencies. Due to the presence of electrically conducting and non-conducting spatial subdomains, the MQS equations become of mixed parabolic-elliptic type. Moreover, if the relationship between the magnetic field and the flux intensities is nonlinear, and the inductive coupling of the electromagnetic components to an~external electrical circuit is included, the MQS system amounts to quasilinear partial integro-differential-algebraic equations whose dynamics is restricted to a~manifold described by algebraic and integral equations. 

A~comprehensive analysis of the MQS equations, ranging from existence, uniqueness and regularity of solutions to the passivity and stability properties, has been presented in \cite{ChiRS23, ReiS23}. The structural properties of coupled field/circuit problems have been studied in \cite{BartBS11, CGdGS20}. For the numerical simulation of such problems, spatial discretization methods such as the finite integration technique (FIT) or the finite element method (FEM) combined with appropriate time integration schemes are commonly used in practice; see, e.g., \cite{AloV10,Boss01,CleW01,Schoeps11} and references therein.

Discretizing the MQS systems on 3D domains leads to differential-algebraic equations (DAEs), which pose significant analytical and computational challenges
due to inherent singularity and high dimensionality.  
For distributed MQS systems and their spatial discretizations, several regularization approaches have been presented in the literature. Most of them are based on different gauging techniques such as tree-cotree gauging \cite{ManC95},
multilevel gauging \cite{Hip00},
grad-div gauging \cite{AloV10,Boss01,CleW02}, 
and ghost field gauging \cite{SchMM02}. In this paper, we take a~different approach
and present, as a first contribution, a~novel regularization strategy which is based on the computation of a~condensed form for the quasilinear coupled MQS system by using a~linear coordinate transformation. This transformation allows us not only to identify underdetermined state components and redundant equations, but also to transform the DAE system into one governed by an~ordinary differential equation (ODE) while preserving passivity. 

In order to reduce the computational effort in transient calculations, reduced-order modelling can be employed. Over the last three decades, many different model reduction techniques have been developed and successfully applied to various types of physical and engineering problems, see \cite{MOR21} for an extensive collection of algorithms and applications.
Model reduction of linear MQS systems using balanced truncation has been considered in \cite{KerBS17,KerS22}, while model reduction methods for nonlinear MQS problems based on the proper orthogonal decomposition (POD) technique combined with the discrete empirical interpolation method (DEIM) have been presented in \cite{KerBS17,MonPHC17,SatCI16}. Most existing works on model reduction demonstrate the approximation properties of the resulting reduced-order models and report simulation time savings, but they often overlook the preservation of the underlying physical properties. To the best of our knowledge, passivity preservation for MQS systems has only been addressed for linear problems \cite{KerS22}. Our second contribution is passivity analysis for spatially discretized and POD-(DEIM-)reduced quasilinear MQS systems.  
We show that the POD method preserves the underlying symmetric structure of the MQS system and, as a~consequence, ensures the preservation of passivity.  
Unfortunately, when the nonlinearity is further approximated by DEIM, its symmetric structure is lost and the preservation of passivity can no longer be guaranteed. 
To overcome this difficulty, we develop a~passivity enforcement method which is based on the derivation of a~computable error bound on the DEIM state error and perturbation of the output components. We will mainly focus on the more general 3D case and only comment on the 2D case where differences arise.

The paper is organized as follows. In Section~\ref{sec:model}, we introduce a quasilinear coupled MQS model, collect the assumptions on a~spatial domain and material parameters, and review some results from \cite{ChiRS23, ReiS23} on unique solvability and passivity properties of this model. A~FEM discretization of the MQS system, a~projection-based regularization and passivity of the FEM model are discussed in Section~\ref{sec:fem}. Section~\ref{sec:pod} deals with POD model reduction of the regularized MQS model. We prove that this method preserves passivity in the reduced model. In Section~\ref{sec:deim}, we develop a~perturbation-based approach to enforce passivity in the POD-DEIM-reduced model. Some results of numerical experiments are given in Section~\ref{sec:num}. Finally, Section~\ref{sec:concl} contains concluding remarks.

{\bf Notations.} Throughout this paper, all spaces are real. A~set of all nonnegative real numbers is denoted by $\mathbb{R}_{\geq 0}$, and $\mathbb{R}^{n\times m}$ stands for the set of real $n\times m$ matrices. The image and the kernel of a~matrix~$A$ are denoted by $\mbox{im}(A)$ and $\mbox{ker}(A)$, respectively. For elements of $\mathbb{R}^n$ and $\mathbb{R}^{m\times n}$, $\|\cdot\|$ stands for the Euclidean vector norm and the spectral matrix norm defined by the largest singular value, respectively. We denote by $\nabla \varphi$, $\nabla\times \varphi$, and $\nabla\cdot\varphi$ the weak gradient, curl, and divergence, respectively. For a~bounded Lipschitz domain $\varOmega\subset\mathbb{R}^3$, let $L^2(\varOmega;\mathbb{R}^3)$ denote the Lebesgue space of square integrable functions with values in $\mathbb{R}^3$. This space is equipped with the standard inner product $\langle\cdot,\cdot\rangle_{L^2(\varOmega;\mathbb{R}^3)}$. 
In addition, $H^1_0(\varOmega)$ is the space of square integrable functions with weak gradient and vanishing boundary trace, and $H_0(\mathrm{curl},\varOmega)$ is the space of square integrable functions with weak curl and vanishing tangential component of the boundary trace. A~dual space of a~Hilbert space $\mathcal{X}$ is denoted by~$\mathcal{X}'$, and \mbox{$\langle\cdot,\cdot\rangle:\mathcal{X}\times \mathcal{X}'\to\mathbb{R}$} stands for a~canonical dual pairing.

\section{Model problem}
\label{sec:model}

Let $\varOmega\subset\mathbb{R}^3$ be a~bounded simply connected domain with a~Lipschitz boundary $\partial\varOmega$ and let $T>0$. We consider the quasilinear MQS system  
\begin{subequations}\label{eq:MQS}
	\begin{align}
		{\tfrac{\partial}{\partial t}}\left(\sigma\bm{A}\right) + \nabla \times \left(\nu(\cdot,\|\nabla \times \bm{A}\|)
		\nabla \times \bm{A}\right)  = & \; \chi\,\bm{i} & \text{ in } &\varOmega\times (0,T], \label{eq:MQS1} \\
		\tfrac{{\rm d}}{{\rm d} t} \int_\varOmega \chi^T\bm{A} \, {\rm d}\xi + R\, \bm{i} = &\;\bm{v}\label{eq:MQScoupl} & \text{ on } &(0,T], \\
		\bm{A}\times \bm{n}_o  = &\; 0 & \mbox{in }& \partial \varOmega\times (0,T],
		\label{eq:MQSbc}\\[2mm]
		\sigma\bm{A}(\cdot,0) = &\; \sigma\bm{A}_0 &\text{ in }&\varOmega,
		\label{eq:MQSic1}\\
		\int_\varOmega \chi^T\bm{A}(\cdot,0) \, {\rm d}\xi=&\, \int_\varOmega \chi^T\bm{A}_0\,
		{\rm d}\xi,&& \label{eq:MQSic2}
	\end{align}
\end{subequations}
where $\bm{A}:\overline{\varOmega}\times [0,T]\to\mathbb{R}^3$ is the magnetic vector potential, 
$\nu:\varOmega\times\mathbb{R}_{\geq 0}\to\mathbb{R}_{\geq 0}$ is the magnetic reluctivity, 
$\sigma:\varOmega\to\mathbb{R}_{\geq 0}$ is the electric conductivity, 
\mbox{$\chi:\varOmega\to\mathbb{R}^{3\times m}$} the~winding density function  describing the geometry of $m$ windings,  
$\bm{v}:[0,T]\to\mathbb{R}^m$ is the voltage, and 
\mbox{$\bm{i}:[0,T]\to\mathbb{R}^m$} is the electrical current through the electromagnetic conductive contacts. Equation \eqref{eq:MQS1} describing the dynamics of the magnetic vector potential $\bm{A}$ results from Maxwell's equations by neglecting the displacement currents and exploiting the constitutive relations. Further, equation~\eqref{eq:MQScoupl} with the resistance matrix $R\in\mathbb{R}^{m\times m}$ follows from Faraday's law of induction. It describes the coupling of electromagnetic devices to an~external circuit. 
The boundary condition \eqref{eq:MQSbc} with the outer unit normal vector 
\mbox{$\bm{n}_o:\partial \varOmega\to\mathbb{R}^3$} implies that the magnetic flux through the boundary $\partial\Omega$ vanishes. 
Finally, equations \eqref{eq:MQSic1} and \eqref{eq:MQSic2} with given \mbox{$\bm{A}_0:\varOmega\to\mathbb{R}^3$} provide the initial conditions for the magnetic vector potential. 
The coupling of electromagnetic devices to an~external electric network is realized here as 
a~stranded conductor model with $m$ ports, where stranded conductors behave as current-driven circuit elements, see \cite{SchoepsDGW13} for details. The coupling interface is described by the coupling equation \eqref{eq:MQScoupl} enhanced with the output equation
\begin{equation}
y=\bm{i}.
\label{eq:output}
\end{equation}
The resulting system \eqref{eq:MQS}, \eqref{eq:output} becomes the control system with the input $\bm{v}$, the state~$(\bm{A},\bm{i})$ and the output $\bm{i}$.

\begin{remark}
In the 2D case, the magnetic vector potential and the winding function have the form 
$\bm{A}=[0,0,\bm{A}_3]^T$ and $\chi=[0,0,\chi_3^T]^T$, respectively. In this case, equations 
\eqref{eq:MQS1} and \eqref{eq:MQSbc} are reduced to 
\[
\arraycolsep=2pt
\begin{array}{rclrl}
\displaystyle{\tfrac{\partial}{\partial t}(\sigma\bm{A}_3) - \nabla \cdot \left(\nu(\cdot,\|\nabla \bm{A}_3\|) 
\nabla \bm{A}_3\right)} & = & \chi_3\,\bm{i} & \quad \mbox{in}& \enskip\varOmega\times (0,T], \\[1mm]
\bm{A}_3 & = & 0 & \quad \mbox{on}& \partial \varOmega\times (0,T]. \\[1mm]
\end{array} 
\]
The coupling equation \eqref{eq:MQScoupl} and the initial conditions \eqref{eq:MQSic1}, \eqref{eq:MQSic2} can be simplified accor\-dingly.
\end{remark}

Next, we collect assumptions on the spatial domain, material parameters and winding function.

\begin{assumption}[Spatial domain, material parameters, winding function] 
\label{ass:assumptions}\
{\rm
\begin{enumerate}[{\rm a)}]
\item \label{ass:domain}
    Let $\varOmega\subset\mathbb{R}^3$ be a~simply connected bounded Lipschitz domain which is decomposed into the {\em conducting} and {\em non-conducting subdomains} $\varOmega_{C}$ and $\varOmega_{I}$, respectively, such that $\overline{\varOmega}_{C}\subset \varOmega$ and \mbox{$\varOmega_I=\varOmega\setminus \overline{\varOmega}_{C}$}. Furthermore, $\varOmega_{C}$ is connected, and $\varOmega_{I}$ has finitely many connected internal subdomains $\varOmega_{I,1},\ldots,\varOmega_{I,q}$ with single boundary components $\mathit{\Gamma}_1,\ldots,\mathit{\Gamma}_q$, respectively, and an~external subdomain $\varOmega_{I,{\rm ext}}$ which has two boundary components $\mathit{\Gamma}_{{\rm ext}}=\overline{\varOmega}_{I,{\rm ext}}\cap \overline{\varOmega}_{C}$ and $\partial\varOmega$. 

\item \label{ass:conductivity}
    The {\em electric conductivity} is given by $\sigma(\xi)=\sigma_C \mathbbm{1}_{\varOmega_C}(\xi)$, where  \mbox{$\mathbbm{1}_{\varOmega_C}:\varOmega\to\{0,1\}$} is an~indicator function of the subdomain $\varOmega_C$ and $\sigma_C>0$. 

\item \label{ass:reluctivity}
    The {\em magnetic reluctivity} is given by $\nu(\xi,\zeta)= \nu_C(\zeta)\mathbbm{1}_{\varOmega_C}(\xi) + \nu_I\mathbbm{1}_{\varOmega_I}(\xi)$, where $\nu_I >0$ and $\nu_C:\mathbb{R}_{\geq 0}\to\mathbb{R}_{\geq 0}$ satisfies the following conditions:
\begin{enumerate}[{\rm (i)}]
    \item $\nu_C$ is measurable; 
    \item $\zeta\mapsto\nu_C(\zeta)\zeta$ is strongly monotone with a~monotonicity constant $m_{\nu_C}>0$;
    \item $\zeta\mapsto\nu_C(\zeta)\zeta$ is Lipschitz continuous with a~Lipschitz constant $L_{\nu_C}>0$.
    \end{enumerate}

\item \label{ass:resistance}
    The {\em resistance matrix} $R\in\mathbb{R}^{m\times m}$ is symmetric and positive definite. 

\item\label{ass:winding}
    The columns of the {\em winding function} $\chi\!=\![\chi_1,\ldots,\chi_m]$ have the following properties:
\begin{enumerate}[{\rm (i)}]
 \item $\chi_j\in L^2(\varOmega;\mathbb{R}^3)$ with $\nabla\cdot\chi_j=0$ for $j=1,\ldots, m$;
\item $\mathrm{supp}(\chi_i)\cap\mathrm{supp}(\chi_j)=\emptyset$ for $i,j=1,\ldots,m$ and $i\neq j$.
\end{enumerate}
\end{enumerate}
}
\end{assumption}

\begin{remark}\label{rem:reluct_wind}\
\begin{enumerate}[{\rm a)}]
\item \label{rem:reluct}
Obviously, $\nu$ inherits the properties of $\nu_C$, i.e.,
for all \mbox{$\xi\in\varOmega$}, \mbox{$\zeta\mapsto\nu(\xi,\zeta)\zeta$} is strongly monotone with the monotonicity constant \mbox{$m_\nu=\min(m_{\nu_C},\nu_I)$} and 
Lipschitz continuous with the Lipschitz constant \mbox{$L_\nu=\max(L_{\nu_C},\nu_I)$}.  

\item \label{rem:wind}
The divergence-free condition for $\chi_1,\ldots,\chi_m$ implies that there exists a~matrix-valued function 
\mbox{$\gamma=[\gamma_1,\ldots,\gamma_m]:\varOmega\to\mathbb{R}^{3\times m}$} with 
components in $H^1(\varOmega)$ such that $\chi_j=\nabla\times \gamma_j$ for $j=1,\ldots,m$, 
see \textup{\cite[Thm.~3.4]{GiraRavi86}}.
\end{enumerate}
\end{remark}

\subsection{Weak formulation} \label{sec:weaksol}

We now present a~weak formulation for the MQS problem \eqref{eq:MQS} and briefly review some results from \cite{ChiRS23} on existence, uniqueness and regularity of a weak solution of this problem. 

We start with introducing appropriate function spaces. Let $\varOmega\subset\mathbb{R}^3$ with the subdomains $\varOmega_C,\varOmega_I\subset\varOmega$ be as in Assumption~\ref{ass:assumptions}~\ref{ass:domain}).
Let $X(\varOmega,\varOmega_C)$ denote the space of square integrable functions which are $L^2$-orthogonal to all gradient fields of functions from $H^1_0(\varOmega)$ being constant on each interface component $\mathit{\Gamma}_1,\ldots,\mathit{\Gamma}_q$ and $\mathit{\Gamma}_{\rm ext}$. Note that $X(\varOmega,\varOmega_C)$ is a~Hilbert space equipped with the inner product in $L^2(\varOmega;\mathbb{R}^3)$. We also define the space $X_0(\mathrm{curl},\varOmega,\varOmega_C)=H_0(\mathrm{curl},\varOmega)\cap X(\varOmega,\varOmega_C)$ which is a~Hilbert space equipped with the inner product in $H_0(\mathrm{curl},\varOmega)$. 

Multiplying equations \eqref{eq:MQS1} and \eqref{eq:MQSic1} with a~test function 
$\varphi\in X_0(\mbox{curl},\varOmega, \varOmega_C)$ 
and integrating them over the domain $\varOmega$, we obtain by using the integration by parts formula
\cite[Thm.~2.11]{GiraRavi86} the variational initial value problem
\begin{equation}
\arraycolsep=2pt
\begin{array}{rcl}
\displaystyle{\tfrac{{\rm d}}{{\rm d}t}\!\int_\varOmega \!\sigma \bm{A}(t)\cdot \varphi\, {\rm d}\xi+\!
	\int_\varOmega \!\nu(\cdot,\|\nabla\!\times\!\bm{A}(t)\|)(\nabla\!\times\!\bm{A}(t))\cdot(\nabla\!\times\! \varphi)\, {\rm d}\xi} &\! =\! &\! 
\displaystyle{\int_\varOmega \chi\, \bm{i}(t)\cdot \varphi\, {\rm d}\xi,} \! \! \!\!\! \!   \\
\displaystyle{\tfrac{{\rm d}}{{\rm d}t}\int_\varOmega \chi^T\bm{A}(t)\, {\rm d}\xi+R\,\bm{i}(t) }& = & \bm{v}(t), \\
\displaystyle{\int_\varOmega \sigma\bm{A}(0)\cdot\varphi\, {\rm d}\xi }& = &
\displaystyle{\int_\varOmega \sigma\bm{A}_0\cdot\varphi\, {\rm d}\xi},\! \! \!\!\! \!\\[3mm]
\displaystyle{\int_\varOmega \chi^T\bm{A}(0)\, {\rm d}\xi }& = &
\displaystyle{\int_\varOmega \chi^T\bm{A}_0\, {\rm d}\xi},
	\end{array}
\label{eq:weak}
\end{equation}
which holds almost everywhere on $[0,T]$ with $T>0$. It follows from  \cite[Thm.~9]{ChiRS23} that under Assumption~\ref{ass:assumptions},
for all $\bm{v}\in L^2(\varOmega;\mathbb{R}^m)$ and 
\mbox{$\bm{A}_0\in X_0(\mbox{curl},\varOmega,\varOmega_C)$}, the coupled MQS system \eqref{eq:MQS} admits a~unique weak solution $(\bm{A},\bm{i})$ on $[0, T]$ in the sense that
\begin{enumerate}[a)]
\item $\sigma\bm{A}\in C([0,T];X(\varOmega,\varOmega_C)) \cap H^1([0,T]; X(\varOmega,\varOmega_C))$,
\item $\int_\varOmega \chi^T\bm{A}(t)\, {\rm d}\xi \in C([0,T];\mathbb{R}^m)\cap H^1([0,T];\mathbb{R}^m)$,
\item $\bm{A}\in L^2([0,T];X_0(\curl,\varOmega,\varOmega_C))$ and $\bm{i}\in L^2([0,T];\mathbb{R}^m)$,
\item equations \eqref{eq:weak} are fulfilled for all $\varphi\in X_0(\mbox{curl},\varOmega,\varOmega_C)$ and almost all $t\in[0,T]$.
\end{enumerate}

\subsection{Passivity}
\label{ssec:passivity}

Passivity is an~important systems-theoretic property of dynamical systems which addresses its energetic behavior. Passive systems are particularly useful in interconnected control design and network synthesis \cite{AndeV73, Will72}. Such systems have the property that they do not generate energy on their own. Passivity of the coupled MQS system~\eqref{eq:MQS},~\eqref{eq:output} has been studied extensively in \cite{ReiS23}. Our passivity analysis of model reduction methods relies heavily on the results presented there which will be reviewed below.

Let us consider first a~general (possibly infinite-dimensional) DAE control system
\begin{subequations}\label{eq:inf_sys}
\begin{align}
    \tfrac{\rm d}{{\rm d}t} \cEl\, x &= \cAl(x) + \cBl\, u, \qquad \cEl\, x(0)=\cEl\, x_0, \label{eq:inf_sys1}\\
     y&=\cCl\, x,\label{eq:inf_sys2}
\end{align}
\end{subequations}
where $\cAl:\mathcal{X}_1\to \mathcal{Z}$ is a~nonlinear continuous operator, and 
$\cEl:\mathcal{X}\to \mathcal{X}$,  
$\cBl:\mathcal{U}\to \mathcal{Z}$ and 
$\cCl:\mathcal{X}\to\mathcal{U}'$ are linear bounded operators 
acting on the Hilbert spaces $\mathcal{Z}$, $\mathcal{U}$, $\mathcal{X}$ and $\mathcal{X}_1$ with continuous embedding $\mathcal{X}_1\subset \mathcal{X}$. The input $u\in L^2([0,T];\mathcal{U})$ is called {\em admissible with the initial condition} $\cEl x(0)=\cEl x_0$ if the initial value problem~\eqref{eq:inf_sys} has a~solution $x:[0,T]\to\mathcal{X}$ such that the state equation in \eqref{eq:inf_sys1} holds in the sense of weak derivatives (in particular, $x$ is continuous as a~function to $\mathcal{X}$, and locally integrable as a~function to $\mathcal{X}_1$) and the output satisfies $y\in L^2([0,T];\mathcal{U}')$. 

\begin{definition}[Passivity]
	{\em A~function $\mathcal{S}: \mathcal{X}\to\mathbb{R}_{\ge0}\cup\{\infty\}$ is called a~{\em sto\-rage function for passivity} of the DAE system \eqref{eq:inf_sys}, if
		for all $T>0$, $x_0\in\mathcal{X}$ with \mbox{$\mathcal{S}(x_0)<\infty$} and all inputs $u\in L^2([0,T]; \mathcal{U}')$ admissible with the initial condition
		$\cEl x(0)=\cEl x_0$, the following conditions are fulfilled:
		\begin{enumerate}[\rm a)]
			\item $t\mapsto \mathcal{S}(x(t))$ is continuous as a function from $[0,T]$ to $\mathbb{R}_{\ge0}\cup\{\infty\}$;
			\item for all $0\leq t_0\leq t_1 \leq T$, the output $y \in L^2([0,T];\mathcal{U})$ fulfills the {\em dissipation inequa\-li\-ty}
			\begin{equation}
			\mathcal{S}(x(t_1))-\mathcal{S}(x(t_0))\leq \int_{t_0}^{t_1} \big\langle u(\tau), y(\tau)\big\rangle \,{\rm d}\tau.
			\label{eq:pass}
			\end{equation}
		\end{enumerate}
The DAE system \eqref{eq:inf_sys} is called {\em passive}, if there exists a storage function for passivity.
}
\end{definition}

The dissipation inequality \eqref{eq:pass} means that the stored energy $\mathcal{S}(x(t_1))$ at any time $T\geq t_1>t_0$ does not exceed the sum of the stored energy $\mathcal{S}(x(t_0))$ at time $t_0\geq 0$ and the total energy $\int_{t_0}^{t_1} \big\langle u(\tau), y(\tau)\big\rangle\,{\rm d}\tau$. Note that the introduced notion of passivity involves 
the state-space representation of the control system. It is also possible to define passivity as an~input-output property.

\begin{definition}
{\em A~DAE control system \eqref{eq:inf_sys} with $\cEl x_0=0$ is called {\em input-output-passive} ({\em io-passive}) if for all $0\leq t\leq T$ and 
all inputs \mbox{$u\in L^2([0,T];\mathcal{U})$} admissible with the initial condition $\cEl\,x(0)=0$, the output $y\in L^2([0,T];\mathcal{U}')$ satisfies
\[
\int_{0}^{t} \big\langle u(\tau),y(\tau)\big\rangle \,{\rm d}\tau \geq 0.
\]
}
\end{definition}

\begin{remark}\label{rem:iopass}
Note that if $\mathcal{S}(0)=0$, then passivity of \eqref{eq:inf_sys} immediately implies \mbox{io-pas}sivity \textup{\cite[Rem.~3.2~b)]{ReiS23}}. The reverse statement can be established under an~additional assumption of reachability \textup{\cite{Brue2010,HillM80}}.
\end{remark}

We now show that the coupled MQS system \eqref{eq:MQS} together with the output equation~\eqref{eq:output} fits into the framework \eqref{eq:inf_sys}. 
To this end, we define the operators
\begin{equation*}
\arraycolsep=2pt
\begin{array}{rrcl}
\qquad\qquad \cEl:&X(\varOmega,\varOmega_C)\times \mathbb{R}^m\,&\to& X(\varOmega,\varOmega_C) \times \mathbb{R}^m,\qquad\qquad\qquad\\
&(\bm{A},\bm{i})\,&\mapsto&(\sigma\bm{A}, \int_{\varOmega} \chi^T\bm{A}\,{\rm d}\xi),\\[3mm]
\cAl:& X_0(\mathrm{curl},\varOmega,\varOmega_C)\times \mathbb{R}^m\,&\to& X_0(\curl,\varOmega,\varOmega_C)'\times \mathbb{R}^m,\\
&(\bm{A},\bm{i})\,&\mapsto&(-\cAl_{11}(\bm{A})+ \chi\,\bm{i},-R\,\bm{i}),\\[3mm]
\cBl:& \mathbb{R}^m\,&\to& X_0(\curl,\varOmega,\varOmega_C)'\times \mathbb{R}^m,\\
&\bm{v}\,&\mapsto&(0,\bm{v}), \\[3mm]
\cCl :& X(\varOmega,\varOmega_C)\times \mathbb{R}^m\,&\to&\mathbb{R}^m,\\
& (\bm{A},\bm{i})\,&\mapsto & \bm{i},
\end{array}
\end{equation*}
with a nonlinear operator
\begin{equation}\label{eq:A11}
\arraycolsep=2pt
\begin{array}{rrcl}
\cAl_{11}:& X_0(\mathrm{curl},\varOmega,\varOmega_C)&\to&\,X_0(\mathrm{curl},\varOmega,\varOmega_C)',\\
&\bm{A}&\mapsto&\displaystyle{\,\left(\varphi\mapsto
\int_\varOmega \nu(\cdot,\|\nabla\times \bm{A}\|)(\nabla\times \bm{A})\cdot(\nabla\times \varphi)\, {\rm d}\xi\right)}.
\end{array}
\end{equation}
Then the MQS system \eqref{eq:MQS},~\eqref{eq:output} can be written as the abstract DAE control system \eqref{eq:inf_sys} with the input $u=\bm{v}$, the state $x=(\bm{A},\bm{i})$, 
the output $y=\bm{i}$, and the initial condition $x_0=(\bm{A}_0, 0)$. 
It follows from \cite[Thm.~9]{ChiRS23} that for 
\mbox{$\bm{A}_0\in X_0(\mathrm{curl},\varOmega,\varOmega_C)$}, any input $\bm{v}\in L^2([0,T]; \mathbb{R}^m)$ is admissible with 
the initial condition  
$\cEl\,(\bm{A}(0), \bm{i}(0))=\cEl\,(\bm{A}_0, 0)$. Then following \cite{ReiS23}, we define 
a~storage function $\mathcal{S}: X_0(\mathrm{curl},\varOmega,\varOmega_C)\times \mathbb{R}^m\to\mathbb{R}_{\geq 0}$ for the MQS system~\eqref{eq:MQS},~\eqref{eq:output} as the magnetic energy
\[
\mathcal{S}(\bm{A},\bm{i})=\int_{\varOmega} \vartheta\bigl(\xi,\|(\nabla\times \bm{A})(\xi)\|^2\bigr)\,{\rm d}\xi 
\]
with the magnetic energy density
\begin{equation}
\vartheta(\xi,\|(\nabla\times \bm{A})(\xi)\|^2)=
\int_0^{\|(\nabla\times \bm{A})(\xi)\|} \nu(\xi,\zeta)\zeta\,{\rm d}\zeta.
\label{eq:gamma}
\end{equation}
Due to \cite[Thm.~3.3]{ReiS23}, we obtain that for all $\bm{v}\in L^2([0,T]; \mathbb{R}^m)$ and all \mbox{$0\leq t_0\leq t_1\leq T$},
the solution $(\bm{A},\bm{i})$ of \eqref{eq:MQS},~\eqref{eq:output} satisfies the energy balance equation
\[
 \mathcal{S}(\bm{A}(t_1),\bm{i}(t_1))-\mathcal{S}(\bm{A}(t_0),\bm{i}(t_0)) = -\int_{t_0}^{t_1} d(\bm{A}(\tau),\bm{i}(\tau)) \,{\rm d}\tau+
\int_{t_0}^{t_1} \bm{v}^T\!(\tau)\, \bm{i}(\tau)\,{\rm d}\tau
\]
with the dissipation function 
\[
d(\bm{A}(\tau),\bm{i}(\tau)) = \big\|\tfrac{\rm d}{{\rm d}\tau} \sqrt{\sigma}\bm{A}(\tau))\big\|^2_{L^2(\varOmega;\mathbb{R}^3)}+\bm{i}^T\!(\tau)\,R\,\bm{i}(\tau)
\] 
which determines the power dissipation. Assumption~\ref{ass:assumptions}~\ref{ass:resistance}) implies that \mbox{$d(\bm{A}(\tau),\bm{i}(\tau))\!\geq\! 0$} for all $\tau\in[0,T]$. Then the dissipation inequality \eqref{eq:pass} is fulfilled and thus the MQS system \eqref{eq:MQS},~\eqref{eq:output} is passive. 
Due to $\mathcal{S}(0)=0$, it is also io-passive. 

\section{Properties of the FEM model}
\label{sec:fem}

In this section, we briefly discuss the spatial discretization of the MQS system \eqref{eq:MQS} by using the FEM and present a~new regularization approach for the resulting FEM model. We also study the structural properties and passivity of this model.

\subsection{FEM discretization}

For the FEM discretization on the 3D domain $\varOmega$, we employ the $H(\mbox{curl},\varOmega)$-con\-for\-ming N\'ed\'elec elements of first type \cite{Ned80}, which are also known as edge elements or Whitney elements of first type, see \cite[Sect.~5]{Boss98}. Let~$\mathcal{T}_h(\varOmega)$ 
be a~regular simplicial triangulation of the domain $\varOmega$, and let $n_n$, $n_e$ and~$n_f$ be the number of nodes, edges and faces in $\mathcal{T}_h(\varOmega)$, respectively. Furthermore, let 
\mbox{$\varphi_1^e,\ldots,\varphi_{n_e}^e\in H_0(\curl,\varOmega)\subset X_0(\curl,\varOmega,\varOmega_C)$} be the edge basis functions which are continuous inside the elements and 
tangentially continuous at the element interfaces.
Approximating the magnetic vector potential~$\bm{A}$ and the initial magnetic vector potential $\bm{A}_0$  by the linear combinations
\[
\bm{A}(\xi,t)\approx \sum_{k=1}^{n_e} \alpha_k^e(t)\,\varphi_k^e(\xi), 
\qquad \bm{A}_0(\xi)\approx \sum_{k=1}^{n_e} \alpha_{k,0}^e\,\varphi_k^e(\xi),
\]
respectively, we obtain from the weak formulation \eqref{eq:weak} by Galerkin projection 
a~quasilinear finite-dimensional DAE system 
\begin{equation}\label{eq:nlDAE}
\arraycolsep=2pt
\begin{array}{rcll}
 \tfrac{\rm d}{{\rm d}t} M_\sigma a & = & -K(a)a+X\bm{i}, & \qquad M_\sigma a(0)=M_\sigma a_0, \\[2mm]
 \tfrac{\rm d}{{\rm d}t} X^T a & = & \hspace*{14mm} -R\,\bm{i} + \bm{v}, & \qquad X^T a(0)=X^Ta_0.
 \end{array}
\end{equation}
Here, $a=[\alpha_1^e,\ldots,\alpha_{n_e}^e]^T$ and $a_0=[\alpha_{1,0}^e,\ldots,\alpha_{n_e,0}^e]^T$ are the semidiscretized magnetic vector potential and initial vector, respectively,
$M_\sigma\in\mathbb{R}^{n_e\times n_e}$ is a~conductivity matrix, 
\mbox{$K(a)\in\mathbb{R}^{n_e\times n_e}$} is a~curl-curl matrix, and
$X\in\mathbb{R}^{n_e\times m}$ is a~coupling matrix with entries
\begin{align}
(M_\sigma)_{kl} & = \int_{\varOmega} \sigma(\xi)\, \varphi_l^e(\xi)\cdot\varphi_k^e(\xi)\, d\xi, \nonumber\\[1mm]
(K(a))_{kl} & = \int_{\varOmega} 
\nu\bigl(\xi,\|\nabla\times \sum_{j=1}^{n_e} \alpha_j^e\varphi_j^e(\xi)\|\bigr)
\big(\nabla\times \varphi_l^e(\xi)\big)\cdot\big(\nabla\times \varphi_k^e(\xi)\big)\, d\xi, \label{eq:K}\\[1mm]
(X)_{kj} & = \int_{\varOmega} \chi_{j}(\xi)\cdot\varphi_k^e(\xi)\, d\xi, \nonumber
\end{align}
respectively, for $k,l=1,\ldots, n_e$ and $j=1,\ldots, m$. 
Reordering the basis functions $\varphi_k^e$ according to the conducting and non-conducting subdomains, we obtain the partitions 
\mbox{$a=[a_1^T, \; a_2^T]^T$}, $a_0=[a_{1,0}^T,\; a_{2,0}^T]^T$, and 
\begin{equation}\label{eq:blockmatr}
M_\sigma = \begin{bmatrix} M_{11} & 0\\ 0 & 0 \end{bmatrix}, \qquad
K(a) = \begin{bmatrix} K_{11}(a_1) & K_{12}\\ K_{21}      & K_{22} \end{bmatrix}, \qquad
X=\begin{bmatrix} X_1 \\ X_2 \end{bmatrix},
\end{equation}
where $a_1,a_{1,0}\in\mathbb{R}^{n_1}$ and $a_2, a_{2,0}\in\mathbb{R}^{n_2}$ with $n_e=n_1+n_2$, and $M_{11}\in\mathbb{R}^{n_1\times n_1}$ is symmetric and positive definite. The latter implies that $M_\sigma$ is symmetric and positive semidefinite. Since the magnetic reluctivity is nonlinear only on the subdomain $\varOmega_{C}$, only the block \mbox{$K_{11}(a_1)\in\mathbb{R}^{n_1\times n_1}$} depends nonlinearly on $a_1$, while other blocks $K_{12}^{}$, $K_{21}^{}=K_{12}^T$ and $K_{22}^{}$ are constant. 
Furthermore, due to Assumption~\ref{ass:assumptions}~\ref{ass:winding})(iii) the matrix $X_2\in\mathbb{R}^{n_2\times m}$ has full column rank. Note that $X_1=0$ if 
$\mathrm{supp}(\chi)\subset \varOmega_I$ meaning that the currents are injected through the contacts in the non-conducting subdomain $\varOmega_I$.

Let $\varphi_1^f,\ldots,\varphi_{n_f}^f$ be the face basis functions and let $C_d\in\mathbb{R}^{n_f\times n_e}$
be a~discrete curl matrix with entries
\[
(C_d)_{kj}=\left\{ \begin{array}{rl} 1, & \mbox{if edge $j$ belongs to face $k$ and has the same orientation}, \\ 
-1, & \mbox{if edge $j$ belongs to face $k$ and has the opposite orientation}, \\ 
0, & \mbox{if edge $j$ does not belong to face $k$}.\end{array}\right.
\]
Then similarly to the linear case \cite{KerS22}, the curl-curl matrix $K(a)$ and the coupling matrix $X$ 
can be written in the factored form 
\begin{equation}
K(a)=C_d^TM_\nu(C_d\, a) C_d, \qquad X=C_d^T\Upsilon,
\label{eq:CMnuC}
\end{equation}
where the entries of the reluctivity matrix $M_\nu(C_d\,a)$ and the matrix $\Upsilon$ are given by
\begin{align*}
(M_\nu(C_d\,a))_{kl} & = \int_{\varOmega} \nu\bigl(\xi,\|\sum_{j=1}^{n_f} \alpha_j^f\varphi_j^f(\xi)\|\bigr)
\varphi_l^f(\xi)\cdot \varphi_k^f(\xi)\, {\rm d}\xi, \quad k,l=1,\ldots, n_f, \\
\Upsilon_{kj} & =\int_\varOmega \gamma_j(\xi)\cdot\varphi_k^f(\xi)\,{\rm d}\xi, \quad\qquad\qquad k=1,\ldots,n_f,\; j=1,\ldots,m.
\end{align*}
Here, $C_d\,a=[\alpha_1^f,\ldots,\alpha_{n_f}^f]^T$, and $\gamma=[\gamma_1,\ldots,\gamma_m]$ is defined in Remark~\ref{rem:reluct_wind}~\ref{rem:wind}). 
Note that $M_\nu(C_d\,a)$ is symmetric and positive definite for all $a\in\mathbb{R}^{n_e}$, and, hence, $K(a)$ is symmetric and positive semidefinite.
 
\begin{remark}
 In the 2D case, the MQS model \eqref{eq:MQS} can be discretized 
using Lagrange nodal elements. As a~result, one gets the FEM model of the same structure as in \eqref{eq:nlDAE}, where, additionally, the matrix $K(a)$ is positive definite for all $a$. 
\end{remark}

Another approach to the spatial discretization of the MQS model~\eqref{eq:MQS} is the 
FIT, e.g., \textup{\cite{CleW01}}. Since the resulting FIT model has the same block structure as the FEM model \eqref{eq:nlDAE}, \eqref{eq:blockmatr}, see \textup{\cite{Schoeps11}}, our subsequent results can be applied to the FIT model in a~similar manner.

\subsection{Regularization of the FEM model}

The FEM model \eqref{eq:nlDAE} with the output equation \eqref{eq:output} can shortly be written as a~DAE control system
\begin{equation}
\arraycolsep=2pt
\begin{array}{rcl}
\tfrac{\rm d}{{\rm d}t}\mathcal{E}_{\FEM}\, x_{\FEM}& = & \mathcal{A}_{\FEM}(x_{\FEM})\, x_{\FEM}+\mathcal{B}_{\FEM}\, u, \quad \mathcal{E}_{\FEM}\, x_{\FEM}(0)=\mathcal{E}_{\FEM}\, x_{\FEM,0}, \\
y_{\FEM} & = & \mathcal{B}_{\FEM}^T\, x_{\FEM},
\end{array}
\label{eq:FEM-DAE}
\end{equation}
with the state $x_{\FEM}=[a^T, \bm{i}^T]^T$, the input $u=\bm{v}$, the output $y_{\FEM}=\bm{i}$, the initial vector \mbox{$x_{\FEM,0}=[a_0^T,\,0]^T$}, and the system matrices 
\begin{equation}
\mathcal{E}_{\FEM} = \begin{bmatrix} M_\sigma & 0\; \\ X^T & 0\;\end{bmatrix}, \qquad
\mathcal{A}_{\FEM}(x_{\FEM}) = \begin{bmatrix} -K(a) & X\; \\ 0 & -R\;\end{bmatrix}, \qquad
\mathcal{B}_{\FEM} = \begin{bmatrix} \;0\; \\ \;I\;\end{bmatrix}.
\label{eq:FEM-DAE-matr}
\end{equation}
In the 2D case, the matrix $K(a)$ is positive definite, and, hence, this system is re\-gular and is of tractability index one \cite{KerBS17}. In the 3D case, the DAE system \eqref{eq:FEM-DAE},~\eqref{eq:FEM-DAE-matr} is, in general, singular since the matrices $\mathcal{E}_{\FEM}$ and $\mathcal{A}_{\FEM}(x_{\FEM})$ might
have a~nontri\-vial common kernel. In terms of the original system in weak formulation \eqref{eq:weak}, this kernel corresponds to the space of all divergence-free elements of $H_0(\mathrm{curl},\varOmega)$ that vanish on~$\varOmega_C$. Note that the latter is a~subspace of $X(\varOmega,\varOmega_C)^\bot$, which is, by definition, the space of all gradient fields of functions from $H^1_0(\varOmega)$ being constant on each interface component $\mathit{\Gamma}_1,\ldots,\mathit{\Gamma}_q$ and $\mathit{\Gamma}_{\rm ext}$.
To overcome the difficulty caused by singularity, system~\eqref{eq:FEM-DAE},~\eqref{eq:FEM-DAE-matr} can be regularized similarly to the infinite-dimensional case using gauging \cite{Hip00,ManC95} or grad-div regularization \cite{Boss01,CleSDGB11,CleW02}. An~alternative approach is to eliminate the over- and underdetermined part using the special structure of the system matrices $\mathcal{E}_{\FEM}$ and $\mathcal{A}_{\FEM}(x_{\FEM})$. Such a~regularization approach has been already applied in \cite{KerS22} to the linear MQS system with constant $\nu_C$. Here, we extend it to the quasilinear system~\eqref{eq:FEM-DAE},~\eqref{eq:FEM-DAE-matr}. 

First, we resolve the second equation in \eqref{eq:nlDAE} for $\bm{i}$ and insert it into the first one. This leads to the DAE system 
\begin{equation}\label{eq:MQSDAE3Dnocoupling}
\tfrac{\rm d}{{\rm d}t}\mathcal{E}a=-K(a)\,a + \mathcal{B}\,u
\end{equation}
with the matrices
\begin{align*}
\mathcal{E}&=\begin{bmatrix}
M_{11}+X_1^{} R^{-1} X_1^T & X_1^{} R^{-1} X_2^T\\[1mm]
X_2^{} R^{-1} X_1^T & X_2^{} R^{-1} X_2^T  
\end{bmatrix}
=\begin{bmatrix}
I &\; C_1^T \Upsilon\\[1mm]
0 &\; C_2^T \Upsilon
\end{bmatrix}
\begin{bmatrix}
M_{11} &0\\[1mm]0& R^{-1}
\end{bmatrix}
\begin{bmatrix}
I &\; 0\\[1mm] \Upsilon^T C_1&\; \Upsilon^T C_2
\end{bmatrix}, \nonumber\\
K(a)&=\begin{bmatrix}
K_{11}(a_1) & K_{12}\\[1mm]
K_{21}      & K_{22}
\end{bmatrix} =\begin{bmatrix} C_1^T\\[1mm] C_2^T \end{bmatrix} M_\nu(C_d\,a)
\begin{bmatrix} C_1 &\; C_2 \end{bmatrix}, \\
\mathcal{B}&=\begin{bmatrix} X_1\\[1mm] X_2  \end{bmatrix} R^{-1} =
\begin{bmatrix} C_1^T \Upsilon \\[1mm] C_2^T \Upsilon \end{bmatrix} R^{-1}, \nonumber
\end{align*}
where $C_d=\begin{bmatrix} C_1 &\; C_2\end{bmatrix}$ is partitioned according to $a=\begin{bmatrix} a_1^T &\; a_2^T\end{bmatrix} ^T$. The output $y_{\FEM}$ takes then the form 
\[
y_{\FEM} = \bm{i} = -R^{-1}\tfrac{\rm d}{{\rm d}t}\begin{bmatrix} X_1^T &\; X _2^T \end{bmatrix} a+ R^{-1} \bm{v} =-\tfrac{\rm d}{{\rm d}t}\mathcal{B}^T a+ R^{-1} u.
\]
It should be noted that if $\mbox{ker}(C_2)$ is nontrivial, then the matrices $\mathcal{E}$ and $K(a)$ have 
a~nontrivial common kernel. It can be determined analogously to the linear case \cite[Thm.~1]{KerS22}
as 
\[
\ker(\mathcal{E})\cap\ker(K(a)) = \mbox{im} \begin{bmatrix} 0 \\ Y_{C_2}\end{bmatrix},
\]
where the columns of $Y_{C_2}\in\mathbb{R}^{n_2\times k_2}$ form a~basis of $\ker(C_2)$. 

In order to find out the over- and underdetermined components of \eqref{eq:MQSDAE3Dnocoupling}, 
we consider a nonsingular matrix 
\begin{equation*}
V = \begin{bmatrix}
I_{n_1} &\enskip 0             &\enskip 0\\
0       &\enskip \hat{Y}_{C_2} &\enskip Y_{C_2}
\end{bmatrix},
\end{equation*}
where the columns of $\hat{Y}_{C_2}\in \mathbb{R}^{n_2\times (n_2-k_2)}$ form a basis of $\mbox{im}(C_2^T)$. Multiplying system~\eqref{eq:MQSDAE3Dnocoupling} from the left with $V^T$ and introducing a~new state vector
\begin{equation}\label{eq:Tinva}
\begin{bmatrix} a_1\\ a_{21}\\ a_{22} \end{bmatrix}= V^{-1} a,
\end{equation}
we obtain an~equivalent DAE system with the transformed system matrices 
\begin{align*}
V^T \mathcal{E} V &=\begin{bmatrix}
M_{11}+ X_1^{} R^{-1}X_1^T &\; X_1^{} R^{-1} X_2^T\hat{Y}_{C_2}^{} &\; 0\\[1mm]
\hat{Y}_{C_2}^T X_2^{} R^{-1} X_1^T &\;  \hat{Y}_{C_2}^T X_2^{} R^{-1} X_2^T \hat{Y}_{C_2}^{} &\; 0\\[1mm]
0&\; 0&\; 0
\end{bmatrix},\\
V^T K(a) V &=\begin{bmatrix}
K_{11}(a_1) &\; K_{12}\hat{Y}_{C_2}^{} &\; 0\\[1mm]
\hat{Y}_{C_2}^T K_{21} &\; \hat{Y}_{C_2}^T K_{22}\hat{Y}_{C_2}^{} &\; 0\\[1mm]
0&\; 0&\; 0
\end{bmatrix},\qquad\quad
V^T \mathcal{B} = \begin{bmatrix}
X_1 \\[1mm]
\hat{Y}_{C_2}^T X_2 \\[1mm] 0
\end{bmatrix}R^{-1}.  
\end{align*}
Further, by introducing the vector $\begin{bmatrix} a_{1,0}^T &\; a_{21,0}^T &\; a_{22,0}^T\end{bmatrix} ^T= V^{-1} a_0$,
the initial condition $\mathcal{E}_{\FEM}\,x_{\FEM}(0)=\mathcal{E}_{\FEM}\,x_0$ can equivalently be written as
\begin{equation}
\begin{bmatrix}
M_{11} & 0 & 0 \\[1mm]
X_1^T    & X_2^T\hat{Y}_{C_2} & 0 
\end{bmatrix} \begin{bmatrix} a_1(0)\\ a_{21}(0)\\ a_{22}(0) \end{bmatrix} = 
\begin{bmatrix}
M_{11} & 0 & 0 \\[1mm]
X_1^T    & X_2^T\hat{Y}_{C_2} & 0 
\end{bmatrix} \begin{bmatrix} a_{1,0}\\ a_{21,0}\\ a_{22,0} \end{bmatrix}.
\label{eq:initFEM}
\end{equation}
One can see that the components of $a_{22}$ are actually not involved in the transformed system and the initial condition. As a~consequence, they can be chosen freely. Removing the trivial equation $0=0$, we obtain a~regular DAE control system 
	\begin{subequations}\label{eq:MQS3Dreg}
	\begin{align}
	\tfrac{\rm d}{{\rm d}t}\mathcal{E}_r\,  x_r&= \mathcal{A}_r(x_r)\,x_r+\mathcal{B}_r\, u,\label{eq:MQS3Dregevul}\\
	y_r&=-\tfrac{\rm d}{{\rm d}t}\mathcal{B}_r^T x_r+R^{-1}u,\label{eq:MQS3Dregout}
\end{align}
	\end{subequations}
where $x_r=\begin{bmatrix} a_1^T,\, a_{21}^T\end{bmatrix}^T$, and 
\begin{equation}\label{eq:MQS3Dregmat}
\begin{aligned}
\mathcal{E}_r&=
\begin{bmatrix}
M_{11}+ X_1^{} R^{-1}X_1^T &\; X_1^{} R^{-1} X_2^T\hat{Y}_{C_2}^{}\\[1mm]
\hat{Y}_{C_2}^T X_2^{} R^{-1} X_1^T &\;  \hat{Y}_{C_2}^T X_2^{} R^{-1} X_2^T \hat{Y}_{C_2}^{} 
\end{bmatrix}\in \mathbb{R}^{n_r\times n_r},\\
\mathcal{A}_r(x_r) &=-
\begin{bmatrix}
K_{11}(a_1) &\; K_{12}\hat{Y}_{C_2}^{} \\[1mm]
\hat{Y}_{C_2}^T K_{21} &\; \hat{Y}_{C_2}^T K_{22}\hat{Y}_{C_2}^{} 
\end{bmatrix}\in \mathbb{R}^{n_r\times n_r},\\
\mathcal{B}_r&= 
\begin{bmatrix}
X_1 \\[1mm]
\hat{Y}_{C_2}^T X_2 
\end{bmatrix}R^{-1}\in \mathbb{R}^{n_r\times m} 
\end{aligned}
\end{equation}
with $n_r=n_1+n_2-k_2$. Note that this system has the same output as \eqref{eq:FEM-DAE}, i.e., \mbox{$y_r=y_{\FEM}$}. The regularity of the matrix pencil $\lambda\mathcal{E}_r-\mathcal{A}_r(x_r)$ (and also of the DAE system~\eqref{eq:MQS3Dregevul})
follows from the symmetry of $\mathcal{E}_r$ and $\mathcal{A}_r(x_r)$  and the fact that 
\mbox{$\ker(\mathcal{E}_r)\cap \ker(\mathcal{A}_r(x_r))=\{0\}$}. Using \eqref{eq:CMnuC}, the system matrices in \eqref{eq:MQS3Dregmat} can also be written in the short form
\begin{align}
\mathcal{E}_r &=\mathcal{F}_\sigma^{} M_{\sigma,R}^{} \mathcal{F}_\sigma^T,\qquad
\mathcal{A}_r(x_r)=-\mathcal{F}_\nu^{} M_\nu^{}(\mathcal{F}_\nu^T x_r)\mathcal{F}_\nu^T,\label{eq:EregAregfac}\\
\mathcal{B}_r &=\mathcal{F}_\nu \Upsilon R^{-1} = \mathcal{F}_\sigma M_{\sigma,R} \begin{bmatrix}0\\I_m\end{bmatrix},\label{eq:Bregfac}
\end{align}
where
\begin{align*}
\mathcal{F}_\sigma\!=\!\begin{bmatrix}
I_{n_1} &\; X_1\\0&\; \hat{Y}_{C_2}^T X_2
\end{bmatrix}
\!=\!\begin{bmatrix}
I_{n_1} & \; C_1^T \Upsilon\\
0 &\; \hat{Y}_{C_2}^T C_2^T \Upsilon
\end{bmatrix},
\quad
M_{\sigma,R}=\begin{bmatrix}
M_{11}&0\\0& R^{-1}
\end{bmatrix},\quad
\mathcal{F}_\nu=\!\begin{bmatrix} C_1^T \\ \hat{Y}_{C_2}^T C_2^T \end{bmatrix}. 
\end{align*}
This shows that $\mathcal{E}_r$ is positive semidefinite and $\mathcal{A}_r(x_r)$ is negative semi\-de\-fi\-ni\-te, since $M_{11}$, $R$ and $M_\nu(\mathcal{F}_\nu^T x_r)$ are positive definite. Furthermore, the initial condition \eqref{eq:initFEM} takes the form 
\begin{equation}
\mathcal{F}_\sigma^T x_r(0)=\mathcal{F}_\sigma^Tx_{r,0}
\label{eq:initreg}
\end{equation}
with $x_{r,0}=\begin{bmatrix} a_{1,0}^T &\; a_{21,0}^T\end{bmatrix}^T$.

In order to investigate the tractability index of the regularized DAE system \eqref{eq:MQS3Dreg}, we employ an~admissible matrix function sequences approach from \cite[Chapt.~3]{LamMT13}. 
Let $h(x_r)=\mathcal{A}_r(x_r)x_r$, $\mathcal{J}_h(x_r)$ denote the Jacobian matrix of $h$ at $x_r$, and let $\mathcal{Q}$ be a~projector onto $\ker(\mathcal{E}_r)$. By definition \cite[Def.~3.28]{LamMT13}, the DAE system \eqref{eq:MQS3Dreg} has tractability index one if the matrix $\mathcal{G}_1(x_r)=\mathcal{E}_r-\mathcal{J}_{h}(x_r)\mathcal{Q}$ is nonsingular for all $x_r$. The following theorem shows that the regularized DAE system \eqref{eq:MQS3Dreg}, \eqref{eq:MQS3Dregmat} indeed has this property.

\begin{theorem}\label{Thm:MQS3Dindex1}
	Consider the DAE \eqref{eq:MQS3Dreg}, \eqref{eq:MQS3Dregmat}, where $M_{11}$ and $M_\nu(C_d\,a)$ are symmetric and positive definite, $X_2$ has full column rank, and the columns of $\hat{Y}_{C_2}$ form a~basis of $\mbox{\rm im}(C_2^T)$. 
	This system has tractability index one. 
\end{theorem}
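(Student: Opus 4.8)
The plan is to verify the nonsingularity criterion of \cite[Def.~3.28]{LamMT13} directly, i.e.\ to show that $\mathcal{G}_1(x_r)z=0$ forces $z=0$. First I would record the kernels implied by the factorizations \eqref{eq:EregAregfac}: since $M_{\sigma,R}$ and $M_\nu(\mathcal{F}_\nu^T x_r)$ are positive definite, one has $\ker(\mathcal{E}_r)=\ker(\mathcal{F}_\sigma^T)$ and $\ker(\mathcal{A}_r(x_r))=\ker(\mathcal{F}_\nu^T)$, and $\mathcal{Q}$ being a~projector onto $\ker(\mathcal{E}_r)$ gives $\mathcal{F}_\sigma^T\mathcal{Q}=0$. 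Because $\mathcal{F}_\nu$ is constant, the Jacobian of $h(x_r)=\mathcal{A}_r(x_r)x_r=-\mathcal{F}_\nu\, g(\mathcal{F}_\nu^T x_r)$, with $g(w)=M_\nu(w)\,w$, factors by the chain rule as $\mathcal{J}_h(x_r)=-\mathcal{F}_\nu\,\mathcal{J}_g(\mathcal{F}_\nu^T x_r)\,\mathcal{F}_\nu^T$, so that $\mathcal{G}_1(x_r)=\mathcal{F}_\sigma M_{\sigma,R}\mathcal{F}_\sigma^T+\mathcal{F}_\nu\,\mathcal{J}_g(\mathcal{F}_\nu^T x_r)\,\mathcal{F}_\nu^T\mathcal{Q}$.

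The key structural ingredient I would establish is that $\mathcal{J}_g(w)$ is positive definite. Differentiating $g(w)=M_\nu(w)w$ componentwise and using that $g$ is the finite-element representation of the constitutive map $\bm{B}\mapsto\nu(\cdot,\|\bm{B}\|)\bm{B}$, strong monotonicity of $\zeta\mapsto\nu(\cdot,\zeta)\zeta$ with constant $m_\nu$ (Remark~\ref{rem:reluct_wind}~\ref{rem:reluct})) transfers to the quadratic form of the tangent: $v^T\mathcal{J}_g(w)v\ge m_\nu\,\|\sum_l v_l\varphi_l^f\|^2_{L^2(\varOmega;\mathbb{R}^3)}>0$ for every $v\ne0$, since the face basis functions are linearly independent. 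This is the step I expect to be the main obstacle, as it requires an~explicit differentiation of the reluctivity-weighted mass matrix and the passage from the pointwise monotonicity inequality to definiteness of the discrete Jacobian; everything afterwards is linear algebra.

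Finally, I would split $z=p+q$ with $q=\mathcal{Q}z\in\ker(\mathcal{E}_r)$ and $p=(I-\mathcal{Q})z$. Using $\mathcal{F}_\sigma^T\mathcal{Q}=0$, the identity $\mathcal{G}_1(x_r)z=0$ reduces to $\mathcal{F}_\sigma M_{\sigma,R}\mathcal{F}_\sigma^T p+\mathcal{F}_\nu\mathcal{J}_g(\mathcal{F}_\nu^T x_r)\mathcal{F}_\nu^T q=0$. Left-multiplying by $q^T$ annihilates the first summand (as $\mathcal{F}_\sigma^T q=0$) and leaves $(\mathcal{F}_\nu^Tq)^T\mathcal{J}_g(\mathcal{F}_\nu^T x_r)(\mathcal{F}_\nu^Tq)=0$; positive definiteness of $\mathcal{J}_g$ then yields $\mathcal{F}_\nu^T q=0$, so $q\in\ker(\mathcal{E}_r)\cap\ker(\mathcal{A}_r(x_r))=\{0\}$, the common kernel being trivial as noted after \eqref{eq:MQS3Dregmat}. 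With $q=0$ the equation collapses to $\mathcal{E}_r p=0$, i.e.\ $p\in\ker(\mathcal{E}_r)=\mathrm{im}(\mathcal{Q})$, whence $p=\mathcal{Q}p=\mathcal{Q}(I-\mathcal{Q})z=0$. Thus $z=0$, $\mathcal{G}_1(x_r)$ is nonsingular for every $x_r$, and the system has tractability index one.
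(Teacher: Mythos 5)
Your argument is correct, but it takes a genuinely different route from the paper's. The paper fixes the concrete projector $\mathcal{Q}=\begin{bmatrix}0&0\\0&YY^T\end{bmatrix}$ with $\im(Y)=\ker(X_2^T\hat{Y}_{C_2})$, observes that $\mathcal{J}_h(x_r)\mathcal{Q}$ then only involves the \emph{constant} blocks $K_{12},K_{22}$ (so $\mathcal{G}_1$ is a single constant matrix), and proves nonsingularity by explicit block eliminations, establishing along the way that $\hat{Y}_{C_2}^TX_2$ has full column rank and that $\hat{Y}_{C_2}^TK_{22}\hat{Y}_{C_2}$ is positive definite --- facts the paper reuses afterwards. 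You instead exploit the factorizations \eqref{eq:EregAregfac} with an arbitrary projector onto $\ker(\mathcal{E}_r)$ and reduce everything to two structural facts: positive definiteness of the quadratic form of $\mathcal{J}_g$ and triviality of $\ker(\mathcal{E}_r)\cap\ker(\mathcal{A}_r(x_r))$; your splitting $z=p+q$ and the annihilation by $q^T$ are clean and the conclusion $p=\mathcal{Q}(I-\mathcal{Q})z=0$ is right. What each buys: your version is shorter, projector-independent, and isolates why the result is true; the paper's version is more computational but entirely avoids differentiating the nonlinearity. Two caveats on your route. First, the step you correctly flag as the main obstacle needs the lift of the scalar strong monotonicity of $\zeta\mapsto\nu(\xi,\zeta)\zeta$ to the vector map $\bm{B}\mapsto\nu(\xi,\|\bm{B}\|)\bm{B}$ (this is \cite[Lem.~3]{ChiRS23}, which the paper only invokes later, in the proof of Theorem~\ref{th:L2est}), and it needs $\mathcal{J}_g$ to exist; under Assumption~\ref{ass:assumptions}~\ref{ass:reluctivity}) the constitutive map is merely Lipschitz, so the Jacobian exists only almost everywhere --- the paper's choice of $\mathcal{Q}$ sidesteps this entirely because $\mathcal{J}_h\mathcal{Q}$ never touches the nonlinear block. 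Second, you import $\ker(\mathcal{E}_r)\cap\ker(\mathcal{A}_r(x_r))=\{0\}$ from the remark after \eqref{eq:MQS3Dregmat}, where the paper asserts it without proof, whereas the paper's proof of the theorem is self-contained on this point; for completeness you should add the short verification that $z\in\ker(\mathcal{F}_\sigma^T)$ forces $z_1=0$, and then $C_2\hat{Y}_{C_2}z_2=0$ together with $\hat{Y}_{C_2}z_2\in\im(C_2^T)=\ker(C_2)^\perp$ forces $\hat{Y}_{C_2}z_2=0$, hence $z_2=0$ by the full column rank of $\hat{Y}_{C_2}$.
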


\begin{proof}
	Let the columns of $Y$ form an~orthonormal basis of $\ker(X_2^T\hat{Y}_{C_2})$. Then a~projector $\mathcal{Q}$ onto $\ker(\mathcal{E}_r)$ can be chosen as
	\begin{equation*}
	\mathcal{Q}=\begin{bmatrix} 0&\; 0\\ 0&\; YY^T\\ \end{bmatrix}.
	\end{equation*}
	In this case, the matrix
	\[
	\mathcal{G}_1(x_r) = \mathcal{E}_r -\mathcal{J}_h(x_r)\mathcal{Q}
	=\begin{bmatrix}
	M_{11}+X_1^{}R^{-1} X_1^T & X_1^{} R^{-1} X_2^T \hat{Y}_{C_2}^{}+K_{12}\hat{Y}_{C_2}^{}Y Y^T\\[1mm]
	\hat{Y}_{C_2}^T X_2^{} R^{-1} X_1^T &  \hat{Y}_{C_2}^T X_2^{} R^{-1} X_2^T\hat{Y}_{C_2}^{} + 
	\hat{Y}_{C_2}^T K_{22} \hat{Y}_{C_2}^{} Y Y^T
	\end{bmatrix}
	\]
	is independent of $x_r$. We now show that this matrix is nonsingular. Assume that there exists a~vector 
	$w=\begin{bmatrix} w_1^T &\; w_2^T\end{bmatrix}^T$ such that $\mathcal{G}_1w=0$. This equation implies  that
	\begin{align}
	(M_{11}+X_1^{}R^{-1} X_1^T)w_1+(X_1^{} R^{-1} X_2^T \hat{Y}_{C_2}^{}+K_{12}\hat{Y}_{C_2}^{}Y Y^T)w_2&=0,\label{eq:3DMtrac1}\\
	\hat{Y}_{C_2}^T X_2^{} R^{-1} X_1^T w_1+(\hat{Y}_{C_2}^T X_2^{} R^{-1} X_2^T\hat{Y}_{C_2}^{} + 
	\hat{Y}_{C_2}^T K_{22} \hat{Y}_{C_2}^{} Y Y^T)w_2&=0.\label{eq:3DMtrac2}
	\end{align}
	Multiplying equation \eqref{eq:3DMtrac2} from the left with $Y^T$ and using $Y^T\hat{Y}_{C_2}^T X_2=0$, we obtain that
	$Y^T \hat{Y}_{C_2}^T K_{22} \hat{Y}_{C_2}^{} Y Y^Tw_2=0$. 
	Since $M_\nu(C_d\,a)$ is symmetric, positive definite and~$\hat{Y}_{C_2}$ is a~basis of $\mbox{im}(C_2^T)$,  the matrix $\hat{Y}_{C_2}^T K_{22} \hat{Y}_{C_2}^{}=\hat{Y}_{C_2}^TC_2^T M_\nu(C_d\,a) C_2^{} \hat{Y}_{C_2}^{}$ is symmetric, positive definite,
	and, hence, $YY^Tw_2=0$. Using the fact that $Y$ has full column rank, we obtain that $Y^Tw_2=0$.
	
	Next, we show that $\hat{Y}_{C_2}^T X_2$ has full column rank, and, hence $X_2^T \hat{Y}_{C_2}^{}\hat{Y}_{C_2}^T X_2$ is nonsingular. Indeed, let $\hat{Y}_{C_2}^T X_2v=0$ for a~vector $v\in\mathbb{R}^m$. 
	Then $X_2v\in \ker(\hat{Y}_{C_2}^T)$. On the other hand, $X_2v=C_2^T\Upsilon v\in \mbox{im}(C_2^T)=\mbox{im}(\hat{Y}_{C_2})$.
	Therefore, $X_2v=0$. Since $X_2$ has full column rank, we get $v=0$.
	
	Multiplying equation \eqref{eq:3DMtrac2} from the left with $X_1(X_2^T\hat{Y}_{C_2}^{}\hat{Y}_{C_2}^T X_2)^{-1} X_2^T\hat{Y}_{C_2}^{}$ and using \mbox{$Y^Tw_2=0$}, we have
	\begin{equation}\label{eq:3DMtrac2'}
	X_1^{} R^{-1} X_1^T w_1+ X_1^{} R^{-1} X_2^T\hat{Y}_{C_2}^{}w_2=0.
	\end{equation}
	Subtracting equation \eqref{eq:3DMtrac2'} from \eqref{eq:3DMtrac1} and using again $Y^Tw_2=0$, we obtain that
	$M_{11}w_1=0$ or, equivalently, $w_1=0$. Furthermore,
	multiplying \eqref{eq:3DMtrac2} from left with~$w_2^T$ and using $Y^Tw_2=0$ and $w_1=0$, we have $w_2^T \hat{Y}_{C_2}^T X_2^{} R^{-1} X_2^T\hat{Y}_{C_2}^{}w_2=0$. Since $R$ is symmetric, positive definite,
	$w_2\in\ker(X_2^T\hat{Y}_{C_2}^{})=\mbox{im}(Y)$.  This means that~$w_2$ belongs to the image of $Y$ and also to the kernel of $Y^T$. Therefore, $w_2=0$. 
	Thus, $\mathcal{G}_1$ is nonsingular, and, hence, the DAE system \eqref{eq:MQS3Dreg},~\eqref{eq:MQS3Dregmat} is of tractability index~one. 
	\end{proof}

Our goal is now to transform the output equation \eqref{eq:MQS3Dregout} into the standard form 
$y_r=\mathcal{C}_r x_r$ with an~output matrix $\mathcal{C}_r\in \mathbb{R}^{m\times n_r}$. 
For this purpose, we transform the pencil $\lambda \mathcal{E}_r -\mathcal{A}_r(x_r) $ into a~condensed form which allows us to extract the algebraic constraints in \eqref{eq:MQS3Dregevul} and derive the output matrix~$\mathcal{C}_r$.

\begin{theorem} \label{thm:existTWEi}
Let the matrices $\mathcal{E}_r$ and $\mathcal{A}_r(x_r)$ be as in \eqref{eq:EregAregfac}. Then there exists a~nonsingular constant matrix $W\in\mathbb{R}^{n_r\times n_r}$ such that
\begin{equation}
W^T\mathcal{E}_r W=\begin{bmatrix}
E_{11} &\; 0 &\; 0\\ 0&\; I_{n_0} &\; 0\\0 &\; 0 &\; 0
\end{bmatrix},\qquad                                    
W^T\mathcal{A}_r(x_r)W=\begin{bmatrix}
A_{11}(x_r) &\; 0 &\; 0\\0 &\; 0 &\; 0\\ 0 &\; 0 &\; I_{n_\infty}
\end{bmatrix},
\label{eq:MQSWeier}
\end{equation}
where $E_{11}\in \mathbb{R}^{n_s\times n_s}$ and $-A_{11}(x_r)\in\mathbb{R}^{n_s\times n_s}$ are both symmetric and positive definite, and
$n_s+n_0+n_\infty=n_r$.
\end{theorem}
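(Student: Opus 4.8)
The plan is to exploit a structural dichotomy hidden in the factorizations \eqref{eq:EregAregfac}: although $\mathcal{A}_r(x_r)$ genuinely depends on the state, its \emph{kernel} does not. Since $M_{\sigma,R}$ and $M_\nu(\mathcal{F}_\nu^T x_r)$ are positive definite for every $x_r$, one has $\ker(\mathcal{E}_r)=\ker(\mathcal{F}_\sigma^T)$ and $\ker(\mathcal{A}_r(x_r))=\ker(\mathcal{F}_\nu^T)$, both independent of $x_r$, while the index-one analysis already supplies $\ker(\mathcal{E}_r)\cap\ker(\mathcal{A}_r(x_r))=\{0\}$. I would therefore construct $W$ as a product $W=T_1T_2T_3$ of three \emph{constant} congruence factors, each built only from these fixed subspaces, so that the single block $A_{11}(x_r)$ absorbs all the remaining state dependence.

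The step on which everything hinges---and the main obstacle---is to prove that the blocks of $\mathcal{A}_r(x_r)$ touching $\ker(\mathcal{E}_r)$ are themselves independent of $x_r$. Here I would use that the nonlinear reluctivity $\nu_C$ is supported on $\varOmega_C$, whereas every $w\in\ker(\mathcal{E}_r)=\ker(\mathcal{F}_\sigma^T)$ has the form $w=\begin{bmatrix}0^T & \tilde w^T\end{bmatrix}^T$ and hence represents a discrete field whose curl $\mathcal{F}_\nu^T w$ is carried by the degrees of freedom of the non-conducting region $\varOmega_I$, where $\nu\equiv\nu_I$ is constant. Consequently, for any $v$ and any $w\in\ker(\mathcal{E}_r)$,
\[
v^T\mathcal{A}_r(x_r)w=-(\mathcal{F}_\nu^T v)^T M_\nu(\mathcal{F}_\nu^T x_r)(\mathcal{F}_\nu^T w)
\]
collapses to a fixed $\nu_I$-weighted bilinear form, independent of $x_r$. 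In particular the restriction of $\mathcal{A}_r$ to $\ker(\mathcal{E}_r)$ is a \emph{constant} matrix, and it is negative definite because $\ker(\mathcal{E}_r)\cap\ker(\mathcal{A}_r)=\{0\}$ forces $w^T\mathcal{A}_r w<0$ for every nonzero $w\in\ker(\mathcal{E}_r)$.

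With this in hand the construction is routine. First I would choose a constant nonsingular $T_1$ whose last $n_\infty:=\dim\ker(\mathcal{E}_r)$ columns span $\ker(\mathcal{E}_r)$ and which reduces the constant, positive semidefinite $\mathcal{E}_r$ to $\mathrm{diag}(I_{n_r-n_\infty},0)$; by the previous paragraph
\[
T_1^T\mathcal{A}_r(x_r)T_1=\begin{bmatrix}\hat A_{11}(x_r) & \hat A_{12}\\ \hat A_{12}^T & \hat A_{22}\end{bmatrix},
\]
with $\hat A_{12},\hat A_{22}$ constant and $\hat A_{22}\prec 0$. The constant block-unitriangular factor $T_2=\bigl[\begin{smallmatrix} I & 0\\ -\hat A_{22}^{-1}\hat A_{12}^T & I\end{smallmatrix}\bigr]$ leaves $T_1^T\mathcal{E}_rT_1$ untouched and uncouples the two blocks, producing the Schur complement $\tilde A_{11}(x_r)=\hat A_{11}(x_r)-\hat A_{12}\hat A_{22}^{-1}\hat A_{12}^T\preceq 0$ in the leading block and $\hat A_{22}$ in the trailing one. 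Finally, since $\ker(\tilde A_{11}(x_r))$ is the image of the $x_r$-independent space $\ker(\mathcal{A}_r)$ under $(T_1T_2)^{-1}$, it is a fixed subspace of dimension $n_0:=\dim\ker(\mathcal{A}_r)$; a constant orthogonal splitting $T_3$ of the leading block along this kernel and its complement sends $\tilde A_{11}(x_r)$ to $\mathrm{diag}(A_{11}(x_r),0)$ with $A_{11}(x_r)\prec 0$, while a further constant congruence on the trailing block normalizes the definite matrix $\hat A_{22}$ to $\pm I_{n_\infty}$ as in \eqref{eq:MQSWeier}. Setting $W=T_1T_2T_3$ and reading off the three diagonal blocks yields \eqref{eq:MQSWeier} with $E_{11}\succ 0$, $-A_{11}(x_r)\succ 0$, and $n_s:=n_r-n_0-n_\infty$; the count $n_s+n_0+n_\infty=n_r$ is immediate, and $n_s\ge 0$ follows from $\ker(\mathcal{E}_r)\cap\ker(\mathcal{A}_r)=\{0\}$.

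The genuinely nontrivial point remains the $x_r$-independence asserted in the second paragraph: without the localization of $\nu_C$ to $\varOmega_C$, the factors $\hat A_{12}$ and $\hat A_{22}$ would inherit the state dependence, $T_2$ and $T_3$ would cease to be constant, and the nonlinearity could not be isolated in the single block $A_{11}(x_r)$. Everything else is the standard simultaneous reduction of a positive semidefinite and a negative semidefinite symmetric matrix with trivial common kernel.
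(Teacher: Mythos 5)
Your proof is correct and rests on the same key observation as the paper's: because the nonlinearity is confined to the $K_{11}$ block, the pairing of $\mathcal{A}_r(x_r)$ with the constant subspace $\ker(\mathcal{E}_r)$ is state-independent, after which $W$ --- which you assemble as three successive constant congruences $T_1T_2T_3$ rather than writing its column blocks down at once, as the paper does via $W_1$, $Y_\nu(Y_\nu^T\mathcal{E}_rY_\nu)^{-1/2}$ and $Y_\sigma(Y_\sigma^T\mathcal{A}_rY_\sigma)^{-1/2}$ --- is the standard simultaneous reduction of two semidefinite symmetric matrices with trivial common kernel. Your hedge ``$\pm I_{n_\infty}$'' is in fact the honest reading: by Sylvester's law of inertia the trailing block of the negative semidefinite $W^T\mathcal{A}_r(x_r)W$ must be $-I_{n_\infty}$, so the $+I_{n_\infty}$ in \eqref{eq:MQSWeier} is a sign slip in the statement that both your construction and the paper's implicitly correct.
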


\begin{proof}
	Let the columns of $Y_{\sigma}\in\mathbb{R}^{n_r\times n_\infty}$ and $Y_{\nu}\in\mathbb{R}^{n_r\times n_0}$ form the bases of $\ker(\mathcal{F}_{\sigma}^T)$ and $\ker(\mathcal{F}_{\nu}^T)$, respectively, i.e.,
	$\mbox{im}(Y_{\sigma})=\ker(\mathcal{F}_{\sigma}^T)$ and $\mbox{im}(Y_{\nu})=\ker(\mathcal{F}_{\nu}^T)$.
	First, note that $Y_{\sigma}^T\mathcal{A}_r(x_r)$ is independent of $x_r$.
	This follows from the fact that the basis matrix $Y_{\sigma}$ has the form 
	$\begin{bmatrix} 0 &\; Y^T\end{bmatrix}^T$, where
	the columns of $Y$ form a~basis of $\ker(X_2^T\hat{Y}_{C_2})$. In this case, we have
	\begin{equation*}
	Y_{\sigma}^T\mathcal{A}_r(x_r)=-\begin{bmatrix} 0 &\; Y^T\end{bmatrix}\begin{bmatrix}
	K_{11}(a_1) & K_{12}\hat{Y}_{C_2} \\
    \hat{Y}_{C_2}^T K_{21}^{} & \hat{Y}_{C_2}^T K_{22}^{}\hat{Y}_{C_2}^{} 
	\end{bmatrix}=-Y^T\begin{bmatrix} \hat{Y}_{C_2}^T K_{21}^{} &\enskip \hat{Y}_{C_2}^T K_{22}^{}\hat{Y}_{C_2}^{} \end{bmatrix},
	\end{equation*}
	which is independent of $x_r$. Therefore, in the following, we just write $Y_{\sigma}^T\mathcal{A}_r$ or $\mathcal{A}_rY_{\sigma}$.
	
	The latter observation allows us to construct a constant transformation matrix~$W$ analogously to the linear case \cite[Thm.~2]{KerS22} as
	\begin{equation*}
	W=\begin{bmatrix}
	W_1  &\enskip Y_{\nu}(Y_{\nu}^T\mathcal{E}_r Y_{\nu})^{-1/2} &\enskip Y_{\sigma}(Y_{\sigma}^T\mathcal{A}_r Y_{\sigma})^{-1/2}
	\end{bmatrix},
	\end{equation*}
	where the columns of $W_1$ form a basis of $\mbox{ker}\left([\mathcal{E}_rY_\nu \enskip\; \mathcal{A}_rY_\sigma]^T\right)$. Using $\mathcal{F}_{\sigma}^TY_{\sigma}^{}=0$, $\mathcal{F}_{\nu}^TY_{\nu}^{}=0$, $W_1^T \mathcal{E}_rY_\nu=0$, and $W_1^T \mathcal{A}_rY_\sigma=0$, we obtain \eqref{eq:MQSWeier}, where
	\begin{equation*}
	E_{11}=W_1^T\mathcal{E}_r W_1,\qquad A_{11}(x_r)=W_1^T\mathcal{A}_r(x_r) W_1
	\end{equation*}
	are symmetric and $E_{11}$ and $-A_{11}(x_r)$ are positive definite. These properties immediately follow from the symmetry and positive semidefiniteness of $\mathcal{E}_r$ and $-\mathcal{A}_r(x_r)$ and the relations
	\begin{align*}
	\rank(W^T\mathcal{E}_r W)     & =\rank(\mathcal{E}_r )= n_r-n_\infty,\\
	\rank(W^T\mathcal{A}_r(x_r)W) & =\rank(\mathcal{A}_r(x_r))=n_r-n_0.
	\end{align*}
	This completes the proof. 
\end{proof}

Note that the input matrix $\mathcal{B}_r$ in \eqref{eq:Bregfac} can also be presented as 
\begin{align}
\mathcal{B}_r&=\mathcal{F}_{\sigma}M_{\sigma,R}
\begin{bmatrix}
0 \\ I_m
\end{bmatrix}=\mathcal{F}_{\sigma}M_{\sigma,R}
\begin{bmatrix}
I_{n_1} & 0\\ X_1^T& X_2^T\hat{Y}_{C_2}
\end{bmatrix}
\begin{bmatrix}
0\\
\hat{Y}_{C_2}^T X_2^{} (X_2^T\hat{Y}_{C_2}^{} \hat{Y}_{C_2}^T X_2^{})^{-1}
\end{bmatrix}\notag\\
&=\mathcal{F}_{\sigma}M_{\sigma,R}\mathcal{F}_{\sigma}^T\begin{bmatrix} 0 \\ \hat{Z}\end{bmatrix} 
= \mathcal{E}_r\begin{bmatrix} 0 \\ \hat{Z}\end{bmatrix}\label{eq:MQS3DBr2}
\end{align}
with $\hat{Z}=\hat{Y}_{C_2}^T X_2^{}(X_2^T\hat{Y}_{C_2}^{}\hat{Y}_{C_2}^T X_2^{})^{-1}$. We consider now a~pseudoinverse of $\mathcal{E}_r$ given by
\[
\mathcal{E}_r^-=W\begin{bmatrix}
E_{11}^{-1} & &\\&I_{n_0} &\\&&0
\end{bmatrix} W^T.
\]
Simple calculations show that this matrix satisfies
\begin{align}
\mathcal{E}_r^{}\mathcal{E}_r^-\mathcal{E}_r^{}& = \mathcal{E}_r^{},\qquad\quad
\mathcal{E}_r^-\mathcal{E}_r^{}\mathcal{E}_r^-=\mathcal{E}_r^-, \qquad\quad
(\mathcal{E}_r^-)^T=\mathcal{E}_r^-,\label{eq:3DEinv}\\
\mathcal{E}_r^-\mathcal{E}_r^{} & = I-\Pi_\infty, \qquad 
\mathcal{E}_r^{}\mathcal{E}_r^-=I-\Pi_\infty^T, \label{eq:3DEinvE}\\
\mathcal{E}_r^{}\mathcal{E}_r^-\mathcal{A}_r(x_r) & = \mathcal{A}_r(x_r)\mathcal{E}_r^-\mathcal{E}_r^{}=\mathcal{E}_r^{}\mathcal{E}_r^-\mathcal{A}_r(x_r)\mathcal{E}_r^-\mathcal{E}_r^{},
\label{eq:3DEEinvA}
\end{align}
where $\Pi_\infty=Y_{\sigma}(Y_{\sigma}^T\mathcal{A}_rY_{\sigma})^{-1}Y_{\sigma}^T\mathcal{A}_r$ is the projector 
onto the right deflating subspace of $\lambda \mathcal{E}_r-\mathcal{A}_r(x_r)$ corresponding to the eigenvalue at infinity.
Equations \eqref{eq:3DEinv} imply that~$\mathcal{E}_r^-$ is the symmetric reflexive inverse of $\mathcal{E}_r$.
Using \eqref{eq:MQS3Dregevul}, \eqref{eq:MQS3DBr2} and the first relation in~\eqref{eq:3DEinv}, the output in \eqref{eq:MQS3Dregout} can be written as 
\begin{align*}
y_r&=-\tfrac{\rm d}{{\rm d}t}\mathcal{B}_r^Tx_r+R^{-1}u
=-\begin{bmatrix} 0 &\; \hat{Z}^T\end{bmatrix}\tfrac{\rm d}{{\rm d}t}\mathcal{E}_r x_r+R^{-1}u\\
&=-\begin{bmatrix} 0 &\; \hat{Z}^T\end{bmatrix}\mathcal{E}_r^{}\mathcal{E}_r^-\tfrac{\rm d}{{\rm d}t}
\mathcal{E}_r^{} x_r+R^{-1}u
=-\mathcal{B}_r^T\mathcal{E}_r^-(\mathcal{A}_r(x_r)x_r+\mathcal{B}_r u)+R^{-1}u\\
&=-\mathcal{B}_r^T\mathcal{E}_r^-\mathcal{A}_r(x_r)x_r+(R^{-1}-\mathcal{B}_r^T\mathcal{E}_r^-\mathcal{B}_r^{})u.
\end{align*}
Taking into account the special block structure of the matrix $\mathcal{A}_r(x_r)$ in \eqref{eq:MQS3Dregmat} and using equations~\eqref{eq:3DEinvE} and~\eqref{eq:3DEEinvA}, we obtain that the matrix
\begin{align}
\mathcal{C}_r & := -\mathcal{B}_r^T\mathcal{E}_r^{-}\mathcal{A}_r(x_r)
=-\begin{bmatrix} 0 &\; \hat{Z}^T\end{bmatrix}\mathcal{A}_r(x_r)(I-\Pi_\infty)\notag\\
& = (X_2^T\hat{Y}_{C_2}\hat{Y}_{C_2}^T X_2)^{-1}X_2^T\hat{Y}_{C_2}\hat{Y}_{C_2}^T\begin{bmatrix}
K_{21} &\enskip K_{22}\hat{Y}_{C_2}
\end{bmatrix}(I-\Pi_{\infty})\label{def:Creg}
\end{align}
is independent of $x_r$. Moreover, it follows from the first equation in \eqref{eq:3DEinv} and \eqref{eq:MQS3DBr2} that 
\begin{align*}
\mathcal{B}_r^T\mathcal{E}_r^-\mathcal{B}_r&=\begin{bmatrix} 0 &\; \hat{Z}^T\end{bmatrix}\mathcal{E}_r\mathcal{E}_r^-\mathcal{E}_r\begin{bmatrix} 0 \\ \hat{Z}\end{bmatrix}
=\begin{bmatrix} 0 &\; \hat{Z}^T\end{bmatrix} \mathcal{F}_{\sigma}M_{\sigma,R}\mathcal{F}_{\sigma}^T\begin{bmatrix} 0 \\ \hat{Z}\end{bmatrix}\\
&=\begin{bmatrix} 0 &\; I_m\end{bmatrix}M_{\sigma,R}\begin{bmatrix} 0 \\ I_m\end{bmatrix}
=R^{-1},
\end{align*}
and, hence, the output takes the form $y_r=\mathcal{C}_rx_r$ with the output matrix $\mathcal{C}_r$ as in~\eqref{def:Creg}. 

Note that the regularized DAE system \eqref{eq:MQS3Dreg},~\eqref{eq:MQS3Dregmat} has the same block structure as that studied in \cite{KerBS17}, which was regularized using gauging. Therefore, we can utilize the transformation method presented there to transform \eqref{eq:MQS3Dreg}, \eqref{eq:MQS3Dregmat}, \eqref{eq:initreg}
into a~control system of ordinary differential equations (ODEs)
\begin{equation}
\arraycolsep=2pt
\begin{array}{rcl}
\tfrac{\rm d}{{\rm d}t} \mathcal{E}_{\ODE}\, x_{\ODE} & = & \mathcal{A}_{\ODE}(x_{\ODE})\, x_{\ODE} + \mathcal{B}_{\ODE}\, u,\quad
x_{\ODE}(0)=x_{\ODE,0},  \\
y_{\ODE} & = & \mathcal{C}_{\ODE}\, x_{\ODE},
\end{array}
\label{eq:ODE}
\end{equation}
with 
$x_{\ODE} = \begin{bmatrix} a_1^T &\enskip (Z^Ta_{21})^T \end{bmatrix}^T$, $x_{\ODE,0}=\begin{bmatrix} a_{1,0}^T &\enskip (Z^Ta_{21,0})^T\end{bmatrix}^T$, $y_{\ODE}=y_r$, and
\begin{equation*}
\arraycolsep=1.4pt
\begin{array}{rcl}
 \mathcal{E}_{\ODE}&=&\enskip\;\begin{bmatrix}   M_{11} +X_1R^{-1}X_1^T & \quad\quad X_1R^{-1}X_2^TZ_2 \\[1mm]
  Z_2^T X_2^{}R^{-1}X_1^T & \quad Z_2^T X_2^{}R^{-1}X_2^TZ_2^{} \end{bmatrix},\\[5mm]
 \mathcal{A}_{\ODE}(x_{\ODE})&\!\!=\!\!&-\! \begin{bmatrix} K_{11}(a_1)& \; K_{12} Z_2\\[1mm]
Z_2^T K_{21}& \; Z_2^T K_{22}Z_2^{} \end{bmatrix}\! +\!
\begin{bmatrix} K_{12}\\[1mm]
Z_2^T K_{22} \end{bmatrix}\!Y_2^{}\!\left(Y_2^T\!K_{22}Y_2^{}\right)^{-1}\!Y_2^T\!
\begin{bmatrix} K_{21} &\; K_{22} Z_2^{} \end{bmatrix}, \\[5mm]
\mathcal{B}_{\ODE}&\!\!=\!\!&\enskip\;\begin{bmatrix} X_1 \\[1mm]
Z_2^T X_2^{}\end{bmatrix}R^{-1},\\[5mm]
\mathcal{C}_{\ODE} & = & \;\hat{Z}^T
\hat{Y}_{C_2}^T \left(I-K_{22}Y_2^{}(Y_2^T K_{22} Y_2^{})^{-1}Y_2^T\right) 
\begin{bmatrix} K_{21} &\; K_{22} Z_2^{}\end{bmatrix}.
\end{array}
\end{equation*}
Here, $Z_2=\hat{Y}_{C_2}Z$ and $Y_2=\hat{Y}_{C_2}Y$, 
where the columns of $Y\in\mathbb{R}^{(n_2-k_2)\times (n_2-k_2-m)}$ form an~orthonormal basis of $\mbox{ker}(X_2^T\hat{Y}_{C_2}^{})$ and
\mbox{$Z\!=\!\hat{Y}_{C_2}^T X_2^{}(X_2^T\hat{Y}_{C_2}^{}\hat{Y}_{C_2}^T X_2^{})^{-1/2}\!\in\!\mathbb{R}^{(n_2-k_2)\times m}$} satisfies $Z^TZ=I$ and $Z^TY=0$. The invertibility 
of $Y_2^TK_{22}Y_2^{}=Y^T\hat{Y}_{C_2}^TK_{22}\hat{Y}_{C_2}^{}Y$ immediately follows from the fact that $\hat{Y}_{C_2}^TK_{22}\hat{Y}_{C_2}^{}$ is symmetric and positive definite which was shown in the proof of Theorem~\ref{Thm:MQS3Dindex1}. Note that $\mathcal{E}_{\ODE}$ is symmetric, positive definite, and $\mathcal{A}_{\ODE}(x_{\ODE})$ is symmetric, negative semidefinite as the Schur complement of the symmetric, negative semidefinite matrix $\mathcal{A}_r(x_r)$.

\subsection{Passivity of the FEM model}

Passivity of the FEM model \eqref{eq:FEM-DAE} and its regularization \eqref{eq:MQS3Dreg} can be established in a~similar way as for the infinite-dimensional system
\eqref{eq:MQS},~\eqref{eq:output} by considering an~appropriate storage function. 
	
\begin{theorem}\label{th:passFEM}
The semidiscretized MQS system \eqref{eq:FEM-DAE}, \eqref{eq:FEM-DAE-matr} is passive.
\end{theorem}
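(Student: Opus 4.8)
The plan is to mirror, at the discrete level, the passivity argument used for the infinite-dimensional system \eqref{eq:MQS},~\eqref{eq:output}. Since \eqref{eq:FEM-DAE},~\eqref{eq:FEM-DAE-matr} is a~finite-dimensional instance of the abstract framework \eqref{eq:inf_sys} with $\mathcal{U}=\mathcal{U}'=\mathbb{R}^m$ and the dual pairing reducing to the Euclidean inner product $\langle u,y\rangle=u^Ty$, it suffices to exhibit a~storage function $\mathcal{S}_{\FEM}\colon\mathbb{R}^{n_e}\times\mathbb{R}^m\to\mathbb{R}_{\ge0}$ satisfying the dissipation inequality \eqref{eq:pass}. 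Guided by the continuous magnetic energy \eqref{eq:gamma} and by the fact that the discrete curl of the semidiscretized potential is $\sum_{j}(C_d a)_j\varphi_j^f$, I would define the discrete magnetic energy
\[
\mathcal{S}_{\FEM}(x_{\FEM})=\int_\varOmega\vartheta\bigl(\xi,\|\textstyle\sum_{j=1}^{n_f}(C_d a)_j\,\varphi_j^f(\xi)\|^2\bigr)\,{\rm d}\xi,
\]
which depends on $x_{\FEM}=[a^T,\bm{i}^T]^T$ only through $a$, is finite everywhere, and is nonnegative because the integrand $\nu(\xi,\zeta)\zeta$ in \eqref{eq:gamma} is nonnegative.

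The key step is to differentiate $\mathcal{S}_{\FEM}$ along a~solution and to recognize the gradient structure of the curl-curl term. Writing $b=C_d a$ and $\Phi(\xi)_{kl}=\varphi_k^f(\xi)\cdot\varphi_l^f(\xi)$, so that $\|\sum_j b_j\varphi_j^f(\xi)\|^2=b^T\Phi(\xi)\,b$, and using $\partial_s\vartheta(\xi,s)=\tfrac12\nu(\xi,\sqrt{s})$ together with the definition of $M_\nu(C_d a)$, one computes (the factors $2$ and $\tfrac12$ cancelling)
\[
\nabla_a\mathcal{S}_{\FEM}=C_d^T\Bigl(\int_\varOmega\nu(\xi,\|\textstyle\sum_j b_j\varphi_j^f\|)\,\Phi(\xi)\,{\rm d}\xi\Bigr)\,b=C_d^TM_\nu(C_d a)\,C_d\,a=K(a)\,a,
\]
where the last equality is the factored form \eqref{eq:CMnuC}. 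Hence, along any solution, the chain rule gives $\tfrac{\rm d}{{\rm d}t}\mathcal{S}_{\FEM}=\dot a^TK(a)\,a$ for almost all $t$. I expect this identity to be the main obstacle: it hinges on the integrability (potential) structure encoded in \eqref{eq:gamma}, namely that $K(a)\,a$ is \emph{exactly} the gradient of $\mathcal{S}_{\FEM}$, so that no spurious terms involving the derivative of $\nu$ survive. This is precisely what the nonquadratic energy density is designed to achieve, and it is what distinguishes the quasilinear case from the simpler linear one.

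It then remains to substitute the FEM equations \eqref{eq:nlDAE}. Multiplying the first equation by $\dot a^T$ gives $\dot a^T M_\sigma\dot a=-\dot a^TK(a)\,a+\dot a^TX\bm{i}$, while the second equation yields $X^T\dot a=-R\,\bm{i}+\bm{v}$, whence $\dot a^TX\bm{i}=(-R\bm{i}+\bm{v})^T\bm{i}=-\bm{i}^TR\bm{i}+\bm{v}^T\bm{i}$. Combining these with the derivative identity produces the discrete energy balance
\[
\tfrac{\rm d}{{\rm d}t}\mathcal{S}_{\FEM}=-d_{\FEM}(x_{\FEM})+\bm{v}^T\bm{i},\qquad d_{\FEM}(x_{\FEM})=\dot a^TM_\sigma\dot a+\bm{i}^TR\,\bm{i},
\]
where $d_{\FEM}$ is nonnegative since $M_\sigma$ is positive semidefinite and $R$ is positive definite by Assumption~\ref{ass:assumptions}~\ref{ass:resistance}). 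Integrating over $[t_0,t_1]$ and dropping the nonnegative dissipation term yields $\mathcal{S}_{\FEM}(x_{\FEM}(t_1))-\mathcal{S}_{\FEM}(x_{\FEM}(t_0))\le\int_{t_0}^{t_1}\bm{v}^T\bm{i}\,{\rm d}\tau=\int_{t_0}^{t_1}u^Ty_{\FEM}\,{\rm d}\tau$, which is exactly \eqref{eq:pass}; continuity of $t\mapsto\mathcal{S}_{\FEM}(x_{\FEM}(t))$ follows from continuity of the solution and of $\vartheta$. Thus $\mathcal{S}_{\FEM}$ is a~storage function for passivity, and the system is passive.
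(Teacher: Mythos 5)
Your proposal is correct and follows essentially the same route as the paper: the same discrete magnetic energy $\int_\varOmega\vartheta(\xi,\|\nabla\times\sum_j\alpha_j^e\varphi_j^e\|^2)\,{\rm d}\xi$ (your face-element representation via $C_d a$ is the same function, since $\nabla\times\sum_j\alpha_j^e\varphi_j^e=\sum_j(C_d a)_j\varphi_j^f$ is exactly what underlies the factorization \eqref{eq:CMnuC}), the same key identity $\tfrac{\rm d}{{\rm d}t}\mathcal{S}_{\FEM}=\dot a^TK(a)a$, and the same substitution of \eqref{eq:nlDAE} to obtain the energy balance and the dissipation inequality. The only cosmetic difference is that you establish the gradient identity $\nabla_a\mathcal{S}_{\FEM}=K(a)a$ through the factored form \eqref{eq:CMnuC}, whereas the paper differentiates under the integral directly using the definition \eqref{eq:K}.
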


\begin{proof}
Similarly to the infinite-dimensional case in Section~\ref{ssec:passivity}, we define a~storage function 
\[
\mathcal{S}_{\,\FEM}(x_{\FEM}(t))=\int_{\varOmega} \vartheta\bigl(\xi,\|\nabla\times\sum_{j=1}^{n_e}\alpha_j^e(t)\varphi_j^e(\xi)\|^2\bigr)\,{\rm d}\xi,
\]
where $x_{\FEM}=[a^T, \bm{i}^T]^T$ with $a=[\alpha_1^e,\ldots,\alpha_{n_e}^e]^T$ solves \eqref{eq:FEM-DAE} or, equivalently, \eqref{eq:nlDAE}. Then using \eqref{eq:nlDAE} we obtain
\[
\arraycolsep=2pt
\begin{array}{l}
\displaystyle{\textstyle{\frac{\rm d}{{\rm d}t}}\mathcal{S}_{\,\FEM}(x_{\FEM}(t))} =  \displaystyle{
 \int_\varOmega\tfrac{\partial}{\partial\zeta}\vartheta\bigl(\xi,\|\nabla\times\sum_{j=1}^{n_e}\alpha_j^e(t)\varphi_j^e(\xi)\|^2\bigr)
\tfrac{\partial}{\partial t}\|\nabla\times\sum_{j=1}^{n_e}\alpha_j^e(t)\varphi_j^e(\xi)\|^2 \,{\rm d}\xi} \\[2mm]
\quad = \displaystyle{\!\int_\varOmega \nu\bigl(\xi,\|\nabla\!\times\!\sum_{j=1}^{n_e}\alpha_j^e(t)\varphi_j^e(\xi)\|\bigr)
\Bigl(\nabla\times\sum_{k=1}^{n_e}\tfrac{\rm d}{{\rm d}t}{\alpha}_k^e(t)\varphi_k^e(\xi)\Bigr)\cdot
\Bigl(\nabla\times\sum_{l=1}^{n_e}\alpha_l^e(t)\varphi_l^e(\xi)\Bigr){\rm d}\xi} \\[2mm]
\quad = \displaystyle{\!\sum_{k=1}^{n_e}\tfrac{\rm d}{{\rm d}t}{\alpha}_k^e(t)\sum_{l=1}^{n_e}\alpha_l^e(t)
\!\int_\varOmega
\nu\bigl(\xi,\|\nabla\!\times\!\sum_{j=1}^{n_e}\alpha_j^e(t)\varphi_j^e(\xi)\|\bigr)
\Bigl(\nabla\times\varphi_k^e(\xi)\Bigr)\cdot
\Bigl(\nabla\times\varphi_l^e(\xi)\Bigr){\rm d}\xi} \\[6mm]
\quad = \displaystyle{\bigl(\tfrac{\rm d}{{\rm d}t}{a}(t)\bigr)^TK(a(t))a(t)} = 
\displaystyle{-\bigl(\tfrac{\rm d}{{\rm d}t}{a}(t)\bigr)^T\tfrac{\rm d}{{\rm d}t}Ma(t)+\bigl(\tfrac{\rm d}{{\rm d}t}{a}(t)\bigr)^TX\bm{i}(t)}\\[2mm]
\quad = \displaystyle{-\bigl(\tfrac{\rm d}{{\rm d}t}{a}(t)\bigr)^T M\tfrac{\rm d}{{\rm d}t}{a}(t)-\bm{i}^T(t)R\,\bm{i}^{}(t)+\bm{v}^T(t)\bm{i}^{}(t)}\\[2mm]
\quad \leq \bm{v}^T(t)\, \bm{i}(t) = u^T(t)\,y_{\FEM}^{}(t).
\end{array}
\]
The inequality holds since $M$ is positive semidefinite and $R$ is positive definite. Integrating this inequality on $[t_0,t_1]\subseteq [0,T]$, we obtain
the dissipation inequality
\[
\mathcal{S}_{\,\FEM}(x_{\FEM}(t_1))-\mathcal{S}_{\,\FEM}(x_{\FEM}(t_0)) \leq \int_{t_0}^{t_1} u^T(\tau)\, y_{\FEM}^{}(\tau)\, {\rm d}\tau,
\]
which implies the passivity of \eqref{eq:FEM-DAE}, \eqref{eq:FEM-DAE-matr}. 
\end{proof}

\begin{remark}
In the 2D case, the passivity of the semidiscretized model can be proved analogously to Theorem~\textup{\ref{th:passFEM}} by considering a storage function
\[
\mathcal{S}_{\,\FEM}(x_{\FEM}(t))=\int_{\varOmega} \vartheta\bigl(\xi,\|\nabla\sum_{j=1}^{n_a}\alpha_j(t)\varphi_j(\xi)\|^2\bigr)\,{\rm d}\xi,
\]
where $a=[\alpha_1,\ldots,\alpha_{n_a}]^T$ is the discrete magnetic potential, $x_{\FEM}=[a^T,\bm{i}^T]^T$, and $\varphi_j$ are the Lagrange basis functions.
\end{remark}

Since the regularized DAE system \eqref{eq:MQS3Dreg} has the same input-output relation as~\eqref{eq:FEM-DAE}, it is passive. For completeness, we verify this result by constructing an~appropriate storage function.

\begin{theorem}\label{th:passFEMreg}
	The regularized MQS system \eqref{eq:MQS3Dreg} is passive.
\end{theorem}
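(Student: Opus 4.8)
The plan is to mirror the proof of Theorem~\ref{th:passFEM} by exhibiting an explicit storage function, namely the discrete magnetic energy expressed in the regularized state $x_r$. Using the factorization \eqref{eq:EregAregfac} together with the identity $\mathcal{F}_\nu^T x_r = C_d\,a$ (which holds because $C_2 Y_{C_2}=0$ annihilates the freely chosen component $a_{22}$), I would set
\[
\mathcal{S}_r(x_r)=\int_\varOmega \vartheta\bigl(\xi,\|\textstyle\sum_{j=1}^{n_f}(\mathcal{F}_\nu^T x_r)_j\,\varphi_j^f(\xi)\|^2\bigr)\,{\rm d}\xi .
\]
This coincides with $\mathcal{S}_{\,\FEM}$ evaluated along the corresponding solution, so $\mathcal{S}_r\ge 0$ and $\mathcal{S}_r(0)=0$, while the continuity of $t\mapsto\mathcal{S}_r(x_r(t))$ follows from the solution regularity that is available once Theorem~\ref{Thm:MQS3Dindex1} guarantees tractability index one.

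Next I would differentiate along a solution. By the chain rule and the definition \eqref{eq:gamma} of $\vartheta$, the inner derivative produces the factor $\nu(\xi,\|\cdot\|)$, and collecting coefficients through the defining formula of the reluctivity matrix $M_\nu$ gives
\[
\tfrac{\rm d}{{\rm d}t}\mathcal{S}_r(x_r)=\bigl(\tfrac{\rm d}{{\rm d}t}x_r\bigr)^T\mathcal{F}_\nu M_\nu(\mathcal{F}_\nu^T x_r)\mathcal{F}_\nu^T x_r=-\bigl(\tfrac{\rm d}{{\rm d}t}x_r\bigr)^T\mathcal{A}_r(x_r)\,x_r ,
\]
where the last equality is exactly \eqref{eq:EregAregfac}. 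Substituting the state equation \eqref{eq:MQS3Dregevul} in the form $\mathcal{A}_r(x_r)x_r=\mathcal{E}_r\tfrac{\rm d}{{\rm d}t}x_r-\mathcal{B}_r u$ and inserting the output relation \eqref{eq:MQS3Dregout}, I obtain after regrouping
\[
u^Ty_r-\tfrac{\rm d}{{\rm d}t}\mathcal{S}_r(x_r)=\bigl(\tfrac{\rm d}{{\rm d}t}x_r\bigr)^T\mathcal{E}_r\,\tfrac{\rm d}{{\rm d}t}x_r-2\,u^T\mathcal{B}_r^T\tfrac{\rm d}{{\rm d}t}x_r+u^TR^{-1}u .
\]

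The decisive step is to recognize the right-hand side as a single quadratic form in $\tfrac{\rm d}{{\rm d}t}x_r-\mathcal{E}_r^-\mathcal{B}_r u$, and this is where the singularity of $\mathcal{E}_r$ must be treated with care. I would rely on the reflexive-inverse identities \eqref{eq:3DEinv} together with the representation $\mathcal{B}_r=\mathcal{E}_r[0,\hat{Z}^T]^T$ from \eqref{eq:MQS3DBr2}, which yield $\mathcal{E}_r\mathcal{E}_r^-\mathcal{B}_r=\mathcal{B}_r$ and $\mathcal{B}_r^T\mathcal{E}_r^-\mathcal{B}_r=R^{-1}$ (the latter being precisely the computation already carried out in deriving $\mathcal{C}_r$). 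These two facts make the cross term and the input term complete the square, so that
\[
u^Ty_r-\tfrac{\rm d}{{\rm d}t}\mathcal{S}_r(x_r)=\bigl(\tfrac{\rm d}{{\rm d}t}x_r-\mathcal{E}_r^-\mathcal{B}_r u\bigr)^T\mathcal{E}_r\bigl(\tfrac{\rm d}{{\rm d}t}x_r-\mathcal{E}_r^-\mathcal{B}_r u\bigr)\ge 0,
\]
because $\mathcal{E}_r$ is positive semidefinite. Integrating over $[t_0,t_1]\subseteq[0,T]$ then gives the dissipation inequality \eqref{eq:pass}, establishing passivity of \eqref{eq:MQS3Dreg}. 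Thus the main obstacle is purely algebraic, ensuring that the semidefinite $\mathcal{E}_r$, its symmetric reflexive inverse, and the structured $\mathcal{B}_r$ interlock to form the square, whereas the differentiation of $\mathcal{S}_r$ is a direct transcription of the calculation in Theorem~\ref{th:passFEM}.
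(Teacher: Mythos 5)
Your argument is correct, and it starts from the same storage function as the paper: the discrete magnetic energy, which via the identity $\mathcal{F}_\nu^T x_r=C_d\,a$ (valid because $C_2Y_{C_2}=0$ kills the free component $a_{22}$) coincides with $\mathcal{S}_{\,\FEM}$ along the reconstructed solution. Where you genuinely diverge is in how the dissipation inequality is obtained. The paper sets $a_{22}=0$, writes $a=V_1x_r$, observes $\mathcal{S}_r(x_r(t))=\mathcal{S}_{\,\FEM}(x_{\FEM}(t))$, and simply defers to the computation in the proof of Theorem~\ref{th:passFEM}, where the dissipation appears in the FEM coordinates as $\bigl(\tfrac{\rm d}{{\rm d}t}a\bigr)^T\!M_\sigma\tfrac{\rm d}{{\rm d}t}a+\bm{i}^TR\,\bm{i}$. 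You instead stay entirely in the regularized coordinates and complete a square with the symmetric reflexive inverse $\mathcal{E}_r^-$, arriving at the explicit dissipation identity
\[
u^Ty_r-\tfrac{\rm d}{{\rm d}t}\mathcal{S}_r(x_r)=\bigl(\tfrac{\rm d}{{\rm d}t}x_r-\mathcal{E}_r^-\mathcal{B}_r u\bigr)^T\mathcal{E}_r\bigl(\tfrac{\rm d}{{\rm d}t}x_r-\mathcal{E}_r^-\mathcal{B}_r u\bigr)\geq 0.
\]
The two ingredients needed for the square, $\mathcal{E}_r^{}\mathcal{E}_r^-\mathcal{B}_r^{}=\mathcal{B}_r^{}$ and $\mathcal{B}_r^T\mathcal{E}_r^-\mathcal{B}_r^{}=R^{-1}$, are precisely the relations the paper establishes in deriving the output matrix $\mathcal{C}_r$, so you invoke nothing unproven; moreover, using $\mathcal{E}_r=\mathcal{F}_\sigma^{} M_{\sigma,R}^{}\mathcal{F}_\sigma^T$ and $\tfrac{\rm d}{{\rm d}t}X^Ta=-R\,\bm{i}+u$ one checks that your quadratic form equals $\bigl(\tfrac{\rm d}{{\rm d}t}a_1\bigr)^T\!M_{11}\tfrac{\rm d}{{\rm d}t}a_1+\bm{i}^TR\,\bm{i}$, i.e.\ exactly the dissipation function of the FEM proof. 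What your route buys is a self-contained energy balance for the regularized DAE that never pulls the solution back to the unregularized system and makes the role of the semidefinite $\mathcal{E}_r$ and its reflexive inverse transparent; what the paper's route buys is brevity. Both arguments are sound.
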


\begin{proof}
Since $a_{22}$ in \eqref{eq:Tinva} can be chosen arbitrarily and it has no influence on the output $y_r$, we take $a_{22}=0$. Then the discrete magnetic potential~$a$ can be written as $a=\begin{bmatrix}a_1^T&\; a_2^T\end{bmatrix}^T=V_1 \begin{bmatrix}a_1^T &\; a_{21}^T\end{bmatrix}^T$
with
\begin{equation*}
V_1=\begin{bmatrix} I_{n_1} &\; 0 \\ 0&\; \hat{Y}_{C_2}	\end{bmatrix}.
\end{equation*}
Introducing new basis functions
\[
[\psi_1(\xi),\ldots, \psi_{n_r}(\xi)]=[\varphi^e_1(\xi), \ldots,\varphi^e_{n_e}(\xi)]V_1,
\]
we obtain that
\begin{equation}
\sum_{i=1}^{n_{e}}\alpha_i^e(t)\varphi^e_i(\xi)=\sum_{i=1}^{n_{r}}\beta_i(t)\psi_i(\xi),
\label{eq:xr}
\end{equation}
where 
\[
\arraycolsep=2pt
\begin{array}{rcl}
\left[\,\beta_1(t),\ldots, \beta_{n_1}(t)\right]^T & = & a_1(t)=\left[\,\alpha_1^e(t),\ldots, \alpha_{n_{1}}^e(t)\right]^T, \\
\left[\,\beta_{n_{1}+1}(t),\ldots, \beta_{n_r}(t)\right]^T & = &  a_{21}(t).
\end{array}
\]
For $x_r(t)=[\beta_1(t),\ldots, \beta_{n_r}(t)]^T$ satisfying \eqref{eq:MQS3Dregevul}, we define a~storage function
\begin{equation*}
\mathcal{S}_r(x_r(t))=\int_{\varOmega} \vartheta(\xi,\|\nabla\times \sum_{i=1}^{n_r}\beta_i(t)\psi_i(\xi)\|^2)\, d\xi
\end{equation*}
with $\vartheta$ as in \eqref{eq:gamma}. Using \eqref{eq:xr}, we get $\mathcal{S}_r(x_r(t))=\mathcal{S}_{\,\FEM}(x_{FEM}(t))$ and, hence,
the passivity of the regularized DAE system \eqref{eq:MQS3Dreg} immediately follows from the proof of Theorem~\ref{th:passFEM}.
\end{proof}

\begin{remark}
	Since the transformation of the DAE control system \eqref{eq:MQS3Dreg} into the ODE control system \eqref{eq:ODE} does not change the input-output relation, the latter is also passive. 
\end{remark}

\section{Passivity-preserving POD model reduction}
\label{sec:pod}

For model reduction of the semidiscretized MQS system \eqref{eq:FEM-DAE}, \eqref{eq:FEM-DAE-matr}, we employ the POD method as proposed in~\cite{KerBS17}. Thought for general systems, this method does not necessarily preserves passivity, we will show that when applying to the structured MQS system \eqref{eq:FEM-DAE}, \eqref{eq:FEM-DAE-matr}, the preservation of passivity can be guaranteed. 

To proceed, we first transform the DAE system \eqref{eq:FEM-DAE} into the ODE system \eqref{eq:ODE} and then compute the reduced-order model by projecting
\begin{subequations}\label{eq:PODred}
\begin{align}
\tfrac{\rm d}{{\rm d}t} E\, {x_{\POD}} & = A(x_{\POD})\,x_{\POD} + B\,u, 
\quad x_{\POD}(0)=x_{\POD,0}, \label{eq:PODred1}\\
y_{\POD} & = C\,x_{\POD},\label{eq:PODred2}
\end{align}
\end{subequations}
with $x_{\POD}=\begin{bmatrix}\tilde{a}_1^T &\; \tilde{a}_{21}^T\end{bmatrix}^T$, $x_{\POD,0}=\tilde{U}^Tx_{\ODE,0}$, 
$A(x_{\POD})=\tilde{U}^T\mathcal{A}_{\ODE}(\tilde{U}x_{\POD})\,\tilde{U}$, \linebreak $E=\tilde{U}^T\mathcal{E}_{\ODE}\tilde{U}$, 
$B=\tilde{U}^T\mathcal{B}_{\ODE}$, 
$C=\mathcal{C}_{\ODE}\tilde{U}$, where the projection matrix is given by
\[
\tilde{U} = \begin{bmatrix} U_{a_1} &\; 0 \\ 0 &\; I_m\end{bmatrix}.
\]
Here, the columns of $U_{a_1}\in\mathbb{R}^{n_1 \times r}$ with $r\ll n_1$ form the POD basis of the snapshot matrix
\mbox{$X_{a_1}=[a_1(t_1), \ldots,a_1(t_q)]$}. The same reduced-order model can also be determined by first projecting the DAE system \eqref{eq:FEM-DAE} using the projection matrix
\begin{equation}
\begin{bmatrix} U_{a_1} &\; 0 &\; 0 &\; 0 \\ 0 &\; Z_2 &\; Y_2 &\; 0 \\ 0 &\; 0 &\; 0 &\; I_m\end{bmatrix}
\label{eq:proj}
\end{equation}
and then transforming the reduced DAE into the ODE form. To verify this, we substitute an~approximation
\begin{equation}
x_{\FEM}=\begin{bmatrix} a_1\\ a_2\\ \bm{i} \end{bmatrix} 
\approx \begin{bmatrix} U_{a_1} &\; 0 &\; 0 & 0 \\ 0 &\; Z_2 &\; Y_2 &\; 0 \\ 0 &\; 0 &\; 0 &\; I_m\end{bmatrix} \begin{bmatrix} \tilde{a}_1\\ \tilde{a}_{21}\\ \tilde{a}_{22}\\ \tilde{\bm{i}} \end{bmatrix}
\label{eq:a_appr}
\end{equation}
into \eqref{eq:FEM-DAE} and multiply this equation from the left with the transpose of the projection matrix in~\eqref{eq:proj}. This leads to the reduced-order DAE model
\begin{equation}
\arraycolsep=1pt
\begin{array}{rcrcl}
\tfrac{\rm d}{{\rm d}t}U_{a_1}^TM_{11}U_{a_1}^{}{\tilde{a}}_1 &\! = & \!\!   -U_{a_1}^TK_{11}(U_{a_1}\tilde{a}_1)U_{a_1}^{}\tilde{a}_1\! - U_{a_1}^TK_{12}Z_2\tilde{a}_{21}\! - U_{a_1}^TK_{12}Y_2\tilde{a}_{22} & + & U_{a_1}^TX_1\,\tilde{\bm{i}} \\
\qquad 0 \enskip &\! = & -Z_2^TK_{21}U_{a_1}\tilde{a}_1 -\, Z_2^TK_{22}Z_2^{}\tilde{a}_{21} - Z_2^TK_{22}Y_2^{}\tilde{a}_{22} & + & Z_2^TX_2\,\tilde{\bm{i}}  \\ 
\qquad 0 \enskip &\! = & -Y_2^TK_{21}U_{a_1}\tilde{a}_1\, -\, Y_2^TK_{22}Z_2^{}\tilde{a}_{21} - Y_2^TK_{22}Y_2^{}\tilde{a}_{22} &  & \\
\tfrac{\rm d}{{\rm d}t}\bigl(X_1^TU_{a_1}{\tilde{a}}_1\bigr. +\bigl. &\!X_2^T\!&\!\!\!\!\!\!\!
Z_2^{}{\tilde{a}}_{21}\bigr)=\qquad\qquad\qquad\qquad\qquad\qquad\qquad\qquad\quad  
\qquad& - & R\,\tilde{\bm{i}}  + u.
\end{array}
\label{eq:trDAE}
\end{equation}
Since $Y_2^TK_{22}Y_2^{}$ and $R$ are both nonsingular, we get from the third and fourth equations that
\begin{eqnarray}
\tilde{a}_{22} & = & -(Y_2^TK_{22}Y_2^{})^{-1} Y_2^T \bigl(K_{21}U_{a_1}\tilde{a}_1+K_{22}Z_2^{}\tilde{a}_{21}\bigr), \label{eq:a22} \\
y_{\POD} & = & \tilde{\bm{i}} = -R^{-1}\tfrac{\rm d}{{\rm d}t}\bigl(X_1^TU_{a_1}{\tilde{a}}_1+X_2^TZ_2^{}{\tilde{a}}_{21}\bigr)+R^{-1}u. \label{eq:tilde_y}
\end{eqnarray}
Substituting these vectors into the first and second equations in \eqref{eq:trDAE}, we obtain the reduced-order model \eqref{eq:PODred}. Inserting \eqref{eq:a22} into \eqref{eq:a_appr}, the discrete vector potential $a$ can be approximated as 
\begin{equation}
a=\begin{bmatrix}a_1\\a_2\end{bmatrix}\approx U \begin{bmatrix}\tilde{a}_1\\\tilde{a}_{21}\end{bmatrix}=U x_{\POD}
\label{eq:a_approx2}
\end{equation}
with the projection matrix 
\[
U  = \begin{bmatrix} U_{a_1} &\enskip 0 \\ -Y_2^{}(Y_2^TK_{22}Y_2^{})^{-1}Y_2^TK_{21}U_{a_1} &\enskip (I-Y_2^{}(Y_2^TK_{22}Y_2^{})^{-1}Y_2^TK_{22})Z_2\end{bmatrix}.
\]
Then the reduced matrices in the state equation \eqref{eq:PODred1} can be represented as 
\begin{align}
E & = U^T \mathcal{E}\, U= 
\begin{bmatrix} U_{a_1}^TM_{11}U_{a_1}^{} & 0\\ 0 & 0	\end{bmatrix}+
U^T\mathcal{B} R\, \mathcal{B}^TU,\notag\\[1mm]
A(x_{\POD}) & = -U^TK(Ux_{\POD})\,U,\qquad B = U^T\mathcal{B}. \label{eq:tildeA} 
\end{align}
Using these relations, we can show that the POD-reduced model \eqref{eq:PODred} preserves passivity.

\begin{theorem}\label{th:passPOD}
The POD-reduced model \eqref{eq:PODred} is passive.
\end{theorem}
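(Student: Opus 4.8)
The plan is to reuse the passivity argument of the FEM model (Theorem~\ref{th:passFEM}) essentially verbatim, the key point being that POD Galerkin projection preserves the symmetric block structure of the MQS DAE. Concretely, I would first observe that the reduced-order DAE \eqref{eq:trDAE}, \emph{before} the elimination of $\tilde{a}_{22}$ and $\tilde{\bm{i}}$, is exactly a structure-preserving Galerkin projection of the original FEM system \eqref{eq:nlDAE}. Writing $P=\left[\begin{smallmatrix}U_{a_1}&0&0\\0&Z_2&Y_2\end{smallmatrix}\right]$ and $\hat a=[\tilde a_1^T,\tilde a_{21}^T,\tilde a_{22}^T]^T$, the first three rows of \eqref{eq:trDAE} read $\tfrac{\rm d}{{\rm d}t}\hat M\hat a=-\hat K(\hat a)\hat a+\hat X\tilde{\bm i}$ and the fourth reads $\tfrac{\rm d}{{\rm d}t}\hat X^T\hat a=-R\tilde{\bm i}+u$, where $\hat M=P^TM_\sigma P$, $\hat K(\hat a)=P^TK(P\hat a)P$ and $\hat X=P^TX$. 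Since $M_\sigma=\left[\begin{smallmatrix}M_{11}&0\\0&0\end{smallmatrix}\right]$ and $K(\cdot)$ are symmetric positive semidefinite, so are $\hat M$ (its only nonzero block is $U_{a_1}^TM_{11}U_{a_1}$) and $\hat K(\hat a)$; moreover $X_2^TY_2=X_2^T\hat Y_{C_2}Y=0$ by definition of $Y$, so the last block of $\hat X$ vanishes and the output of the reduced model is still $y_{\POD}=\tilde{\bm i}$, paired with the same resistance matrix $R$ as in \eqref{eq:output}.

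Next I would take as reduced storage function the FEM magnetic energy evaluated at the Galerkin approximation $a\approx Ux_{\POD}=P\hat a$, namely $\mathcal{S}_{\POD}(x_{\POD})=\int_\varOmega\vartheta(\xi,\|\nabla\times\sum_j (P\hat a)_j\varphi_j^e(\xi)\|^2)\,{\rm d}\xi$ with $\vartheta$ as in \eqref{eq:gamma}. Along solutions this coincides with $\mathcal{S}_{\FEM}$ evaluated at $a=P\hat a$, so the same chain-rule computation as in the proof of Theorem~\ref{th:passFEM} gives $\tfrac{\rm d}{{\rm d}t}\mathcal{S}_{\POD}=\bigl(\tfrac{\rm d}{{\rm d}t}(P\hat a)\bigr)^TK(P\hat a)(P\hat a)=\bigl(\tfrac{\rm d}{{\rm d}t}\hat a\bigr)^T\hat K(\hat a)\hat a$, using $P^TK(P\hat a)P=\hat K(\hat a)$.

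I would then substitute the projected equations exactly as in the FEM case: the first equation yields $\hat K(\hat a)\hat a=-\tfrac{\rm d}{{\rm d}t}(\hat M\hat a)+\hat X\tilde{\bm i}$, and since $\bigl(\tfrac{\rm d}{{\rm d}t}\hat a\bigr)^T\hat X\tilde{\bm i}=\bigl(\tfrac{\rm d}{{\rm d}t}(\hat X^T\hat a)\bigr)^T\tilde{\bm i}=(-R\tilde{\bm i}+u)^T\tilde{\bm i}$ from the coupling equation, I obtain $\tfrac{\rm d}{{\rm d}t}\mathcal{S}_{\POD}=-\bigl(\tfrac{\rm d}{{\rm d}t}\hat a\bigr)^T\hat M\tfrac{\rm d}{{\rm d}t}\hat a-\tilde{\bm i}^TR\tilde{\bm i}+u^T\tilde{\bm i}$. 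As $\hat M$ is positive semidefinite, $R$ is positive definite and $y_{\POD}=\tilde{\bm i}$, this gives $\tfrac{\rm d}{{\rm d}t}\mathcal{S}_{\POD}\le u^Ty_{\POD}$. Integrating over $[t_0,t_1]\subseteq[0,T]$ delivers the dissipation inequality \eqref{eq:pass}, and since $\mathcal{S}_{\POD}(0)=0$ together with continuity of $t\mapsto\mathcal{S}_{\POD}(x_{\POD}(t))$, all requirements of the passivity definition are met.

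The step I expect to require the most care is verifying that the projection really preserves exactly the structure needed — in particular that the reduced output stays $\tilde{\bm i}$ against the unchanged $R$, and that the eliminated component $\tilde a_{22}$, fixed by \eqref{eq:a22}, enters consistently so that $Ux_{\POD}=P\hat a$ and the energy identity closes without residual terms. An alternative, more computational route would work directly with \eqref{eq:PODred} and \eqref{eq:tildeA}, avoiding the reintroduction of $\hat a$: there one must use the identity $RB^T=[\,X_1^TU_{a_1}\ \ X_2^TZ_2\,]$ (again a consequence of $X_2^TY_2=0$) to rewrite \eqref{eq:tilde_y} as $y_{\POD}=-B^T\tfrac{\rm d}{{\rm d}t}x_{\POD}+R^{-1}u$, and the splitting of $E$ in \eqref{eq:tildeA} into $U_{a_1}^TM_{11}U_{a_1}$-block plus $BRB^T$, so as to recover the two sign-definite dissipation terms $\bigl(\tfrac{\rm d}{{\rm d}t}x_{\POD}\bigr)^T\!\bigl[\begin{smallmatrix}U_{a_1}^TM_{11}U_{a_1}&0\\0&0\end{smallmatrix}\bigr]\tfrac{\rm d}{{\rm d}t}x_{\POD}$ and $y_{\POD}^TRy_{\POD}$.
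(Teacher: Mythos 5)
Your proposal is correct and follows essentially the same route as the paper: both take as storage function the FEM magnetic energy \eqref{eq:gamma} evaluated at the Galerkin approximant $Ux_{\POD}$ and repeat the energy-balance computation of Theorem~\ref{th:passFEM}, the key point in each case being that the projection preserves the symmetric positive-semidefinite structure so that the two dissipation terms (the $M_{11}$-term and the $R$-term) reappear with the right signs. The only difference is bookkeeping: you substitute the dynamics at the level of the projected DAE \eqref{eq:trDAE} before eliminating $\tilde{a}_{22}$ and $\tilde{\bm{i}}$, whereas the paper works with the eliminated ODE \eqref{eq:PODred} via the splitting of $E$ in \eqref{eq:tildeA} and the output relation \eqref{eq:tilde_y} --- precisely the ``alternative, more computational route'' you describe at the end.
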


\begin{proof}
The result can be proved analogously to Theorem~\ref{th:passFEMreg}. 
Let 
\[
\tilde{a}_1=[\tilde{\alpha}_1, \ldots, \tilde{\alpha}_r]^T, \qquad 
\tilde{a}_{21}=[\tilde{\alpha}_{r+1}, \ldots, \tilde{\alpha}_{r+m}]^T.
\]
Then using \eqref{eq:a_approx2}, we get an~approximation 
\[
\sum_{k=1}^{n_e} \alpha_k^e(t) \varphi_k^e(\xi) \approx 
\sum_{k=1}^{r+m} \tilde{\alpha}_k(t) \tilde{\psi}_k(\xi)
\]
with new basis functions
\begin{equation}
[\tilde{\psi}_1(\xi), \ldots, \tilde{\psi}_{r+m}(\xi)]=[\varphi_1^e(\xi), \ldots, \varphi_{n_e}^e(\xi)]\,U.
\label{eq:psi_basis}
\end{equation}
Define a~storage function for the reduced-order model \eqref{eq:PODred} as 
\[
\mathcal{S}_{\POD}(x_{\POD}(t)) = \int_{\varOmega} \vartheta\bigl(\xi,\|\nabla\times\sum_{j=1}^{r+m}\tilde{\alpha}_j(t)\tilde{\psi}_j(\xi)\|^2\bigr)\,{\rm d}\xi,
\]
where $x_{\POD}=[\tilde{\alpha}_1,\ldots,\tilde{\alpha}_{r+m}]^T$ and $\vartheta$ is as in \eqref{eq:gamma}. 
Using \eqref{eq:psi_basis}, \eqref{eq:tildeA} and \eqref{eq:PODred}, we compute the time derivative 
\begin{align*}
&\displaystyle{\tfrac{\rm d}{{\rm d}t}{\mathcal{S}_{\POD}}(x_{\POD}(t))} =  
\displaystyle{ \int_\varOmega\tfrac{\partial}{\partial\zeta}
\vartheta\bigl(\xi,\|\nabla\times\sum_{j=1}^{r+m}\tilde{\alpha}_j(t)\tilde{\psi}_j(\xi)\|^2\bigr)
\tfrac{\partial}{\partial t}\|\nabla\times\sum_{j=1}^{r+m}\tilde{\alpha}_j(t)\tilde{\psi}_j(\xi)\|^2 \,{\rm d}\xi} \\
&\quad = \displaystyle{\!\int_\varOmega \nu\bigl(\xi,\|\nabla\!\times\!\sum_{j=1}^{r+m}\tilde{\alpha}_j(t)\tilde{\psi}_j(\xi)\|\bigr)
\Bigl(\nabla\!\times\!\sum_{k=1}^{r+m}\tfrac{\rm d}{{\rm d}t}{\tilde{\alpha}}_k(t)\tilde{\psi}_k(\xi)\Bigr)\cdot
\Bigl(\nabla\!\times\!\sum_{l=1}^{r+m}\tilde{\alpha}_l(t)\tilde{\psi}_l(\xi)\Bigr){\rm d}\xi} \\
&\quad = \displaystyle{\!\sum_{k=1}^{r+m}\!\tfrac{\rm d}{{\rm d}t}{\tilde{\alpha}}_k(t)\!\sum_{l=1}^{r+m}\!\tilde{\alpha}_l(t)\!
\int_\varOmega \nu\bigl(\xi,\|\nabla\!\times\!\sum_{j=1}^{r+m}\tilde{\alpha}_j(t)\tilde{\psi}_j(\xi)\|\bigr)
\Bigl(\nabla\!\times\!\tilde{\psi}_k(\xi)\Bigr)\cdot
\Bigl(\nabla\!\times\!\tilde{\psi}_l(\xi)\Bigr){\rm d}\xi} \\[3mm]
&\quad = \bigl(\tfrac{\rm d}{{\rm d}t}{x_{\POD}}(t)\bigr)^T U^T K(Ux_{\POD}(t)) Ux_{\POD}(t) \\[3mm]
&\quad = -\bigl(\tfrac{\rm d}{{\rm d}t}{x_{\POD}}(t)\bigr)^T A(x_{\POD}(t)) x_{\POD}(t) \\[3mm]
&\quad = \displaystyle{-\bigl(\tfrac{\rm d}{{\rm d}t}{x_{\POD}}(t)\bigr)^T  \tfrac{\rm d}{{\rm d}t}E\,{x_{\POD}}(t) + \bigl(\tfrac{\rm d}{{\rm d}t}{x_{\POD}}(t)\bigr)^TB\,u(t)}\\[3mm]
&\quad = \displaystyle{-\bigl(\tfrac{\rm d}{{\rm d}t}{\tilde{a}}_1(t)\bigr)^TU_{a_1}^TM_{11}U_{a_1}\tfrac{\rm d}{{\rm d}t}{\tilde{a}}_1(t)
-\bigl(\tfrac{\rm d}{{\rm d}t}{x_{\POD}}(t)\bigr)^TBRB^T\tfrac{\rm d}{{\rm d}t}{x_{\POD}}(t)}\\[3mm]
&\quad\quad+\displaystyle{\bigl(\tfrac{\rm d}{{\rm d}t}{x_{\POD}}(t)\bigr)^TBu(t).}
\end{align*}
Taking into account the positive definiteness of the matrices $M_{11}$ and $R$ as well as the relation 
$y_{\POD}(t)=-\tfrac{\rm d}{{\rm d}t}B^Tx_{\POD}(t)+R^{-1}u(t)$, which follows from \eqref{eq:tilde_y}, we obtain that
\[
\arraycolsep=2pt
\begin{array}{rcl}
\tfrac{\rm d}{{\rm d}t}{\mathcal{S}_{\POD}}(x_{\POD}(t)) & \leq &
-\bigl(\tfrac{\rm d}{{\rm d}t}B^Tx_{\POD}(t)\bigr)^TR\bigr(\tfrac{\rm d}{{\rm d}t}B^Tx_{\POD}(t)-R^{-1}u(t)\bigr)\\[2mm]
& = & -y_{\POD}^T(t)R\, y_{\POD}^{}(t) +u^T(t)\,y_{\POD}^{}(t)\, \leq  
u^T(t)\,y_{\POD}^{}(t).
\end{array}
\] 
Integrating this inequality on $[t_0,t_1]\subseteq [0,T]$, we get the dissipation inequality
\[
{\mathcal{S}_{\POD}}(x_{\POD}(t_1))-{\mathcal{S}_{\POD}}(x_{\POD}(t_0)) \leq \int_{t_0}^{t_1} u^T(\tau)\,y_{\POD}^{}(\tau)\, {\rm d}\tau,
\]
which implies the passivity of the reduced-order model \eqref{eq:PODred}. 
\end{proof}

\section{Enforcing passivity for the POD-DEIM-reduced model}
\label{sec:deim}

The simulation of the POD-reduced model \eqref{eq:PODred} can significantly be accelerated by using the DEIM \cite{ChaS10} for fast evaluation of the non\-li\-nea\-ri\-ty 
$A(x_{\POD})x_{\POD}$. Taking into account the structure of $\nu$ on the conducting and non-conducting subdomains, the block $K_{11}(a_1)$ of $K(a)$ in \eqref{eq:blockmatr} can be written as 
$K_{11}(a_1)=K_{11,l}+K_{11,n}(a_1)$ with a constant matrix $K_{11,l}$ and a~matrix-valued nonlinear function $K_{11,n}(a_1)$. Then the nonlinearity of  \eqref{eq:PODred} takes the form
\begin{equation}
A(x_{\POD})x_{\POD}=A_l\,x_{\POD} +\begin{bmatrix}U_{a_1}^T f_1(U_{a_1}^{}\tilde{a}_1)\\ 0\end{bmatrix},
\label{eq:Atildex}
\end{equation}
where $f_1(U_{a_1}\tilde{a}_1)=-K_{11,n}(U_{a_1}\tilde{a}_1)U_{a_1}\tilde{a}_1$ and $A_l=-U^TK_l\,U$ with
\[
K_l= \begin{bmatrix} K_{11,l} &\; K_{12} \\ K_{21} &\; K_{22} \end{bmatrix}.
\] 
Applying the DEIM to $f_1(U_{a_1}\tilde{a}_1)$ as described in \cite{KerBS17}, we obtain a POD-DEIM model 
\begin{equation}
\arraycolsep=2pt
\begin{array}{rcl}
\tfrac{\rm d}{{\rm d}t}E\, {x_{\DEIM}} & = & F(x_{\DEIM})\,x_{\DEIM} + B\,u,
\quad x_{\DEIM}(0)=x_{\DEIM,0}, \\
y_{\DEIM} & = & C\,x_{\DEIM},
\end{array}
\label{eq:POD-DEIMred}
\end{equation}
where $x_{\DEIM}=[\hat{a}_1^T, \hat{a}_{21}^T]^T$, $x_{\DEIM,0}=x_{\POD,0}$, and
\[
F(x_{\DEIM})x_{\DEIM}=A_l\,x_{\DEIM}+\begin{bmatrix}\hat{f}_1(\hat{a}_1)\\0\end{bmatrix},
\]
with the DEIM approximation $\hat{f}_1(\hat{a}_1)\!=\!U_{a_1}^TU_{\!f_1}(S_{\mathcal{K}}^TU_{\!f_1})^{-1}S_{\mathcal{K}}^Tf_1(U_{a_1}\hat{a}_1)$.
Here, \mbox{$U_{\!f_1}\!\in\!\mathbb{R}^{n_1\times \ell}$} is the DEIM basis matrix obtained from the snapshot matrix 
\[
X_{f_1}=\big[f_1(a_1(t_1)),\ldots,f_1(a_1(t_q))\big],
\] 
and \mbox{$S_{\mathcal{K}}\in\mathbb{R}^{n_1\times \ell}$} is the selector matrix associated with a~DEIM index set $\mathcal{K}$ determined by a greedy procedure, see \cite{KerBS17} for detail. One can see that only a few selected components of the nonlinear function $f_1$ need to be evaluated at each time integration step. 

It should be noted, however, that the DEIM does not preserve the underlying symmetric system structure, which was used for the construction of the storage functions above, and, as a~consequence, we can no longer guarantee the preservation of passivity. To remedy this problem, we propose to enforce the io-passivity of the POD-DEIM-reduced model \eqref{eq:POD-DEIMred} by a~small perturbation of the output. 

We aim to find a~scalar function $\delta:[0,T]\to\mathbb{R}$ such that the perturbed control system
\begin{equation}
\arraycolsep=2pt
\begin{array}{rcl}
\tfrac{\rm d}{{\rm d}t}E\,{x_{\DEIM}} &=&  F(x_{\DEIM})\,x_{\DEIM} + B\, u,
\quad x_{\DEIM}(0)=x_{\DEIM,0}, \\
y_\delta & = & C \,x_{\DEIM} + \delta\, u
\end{array}
\label{eq:delta-sys}
\end{equation}
is io-passive and the output error $\|y_{\DEIM}-y_\delta\|$ is small. For this purpose, we consider the POD-reduced system~\eqref{eq:PODred} with the state $x_{\POD}$ and the POD-DEIM-reduced system~\eqref{eq:POD-DEIMred} with the state
$x_{\DEIM}$. Then for the DEIM state error $\varepsilon=x_{\POD} -x_{\DEIM}$, we obtain for all $t\in[0,T]$ that
\begin{align}
  \int_0^{t} &u^T(\tau) \, y_\delta^{}(\tau)\, {\rm d}\tau  =  
  \displaystyle{\int_0^{t} u^T(\tau) \bigl(C x_{\DEIM}(\tau) +\delta(\tau) u(\tau)\bigr)\, {\rm d}\tau} \nonumber\\
  &= \displaystyle{\int_0^{t} u^T(\tau) \bigl(C x_{\POD}(\tau)-C \varepsilon(\tau)+
	\delta(\tau) u(\tau)\bigr) \, {\rm d}\tau}\nonumber\\
  &\geq \displaystyle{\int_0^{t} \!u^T(\tau)\, y_{\POD}(\tau)\, {\rm d}\tau + 
  \int_0^{t} \!\delta(\tau) \|u(\tau)\|^2 {\rm d}\tau- \int_0^{t} \!\|C\|\|\varepsilon(\tau)\|\,
	\|u(\tau)\|\,{\rm d}\tau}. 
  \label{eq:integrals}
\end{align}
Since the POD-reduced system \eqref{eq:PODred} is passive and $\mathcal{S}_{\rm POD}(0)=0$, it is also io-passive by Remark~\ref{rem:iopass}. Therefore, the first integral in \eqref{eq:integrals} is non-negative. 
By choosing 
\begin{equation}
\delta(t) \geq \left\{\begin{array}{ll} 
\displaystyle{\frac{\|C\|\|\varepsilon(t)\|}{\|u(t)\|}}, & \quad\text{if}\enskip \|u(t)\|\neq 0,  \\ 
 0,                                                      & \quad\text{if}\enskip \|u(t)\|= 0,
                      \end{array}
\right.
\label{eq:delta_est}
\end{equation}
for all $t\in[0,T]$, we obtain that $\delta(t) \|u(t)\|^2 - \|C\|\,\|\varepsilon(t)\| \|u(t)\|\geq 0$ for all $t\in[0,T]$ and, hence, 
\[
\int_0^{t} u^T(\tau)\,y_{\delta}(\tau) \,{\rm d}\tau \geq 0.
\]
This implies that the perturbed  system \eqref{eq:delta-sys} is io-passive. 

The computation of $\delta(t)$ relies on the DEIM state error $\varepsilon(t)$ which is not readily available. Instead, we derive a~computable bound $\|\varepsilon(t)\|\leq \theta(t)$ which allows us to determine $\delta(t)$ as
\begin{equation}
\delta(t) = \left\{\begin{array}{ll} 
\displaystyle{\frac{\|C\|\,\theta(t)}{\|u(t)\|}}, & \quad \text{if}\enskip \|u(t)\|\neq 0,  \\ 
                      0, & \quad\text{if}\enskip \|u(t)\|= 0.
                      \end{array}
\right.
\label{eq:delta}
\end{equation}
It, obviously, satisfies \eqref{eq:delta_est} and 
the perturbation $\delta(t)u(t)$ remains bounded if~$\theta(t)$ is bounded.
For $\delta(t)$ as in \eqref{eq:delta}, the output error for systems \eqref{eq:POD-DEIMred} and \eqref{eq:delta-sys} is given by
\[
\|y_{\DEIM}(t)-y_\delta(t)\| = \|\delta(t)u(t)\|= \|C\|\,\theta(t).
\]
Moreover, taking into account that
\[
\|y_{\POD}(t)-y_{\DEIM}(t)\| =\|Cx_{\POD}(t)-Cx_{\DEIM}(t)\|\leq \|C\|\,\theta(t),
\]
we estimate the output error for systems \eqref{eq:PODred} and \eqref{eq:delta-sys} as
\[
\|y_{\POD}(t)-y_\delta(t)\| \leq \|y_{\POD}(t)-y_{\DEIM}(t)\|+\|y_{\DEIM}(t)-y_\delta(t)\| \leq 2 \|C\|\,\theta(t).
\]

In order to derive a bound on $\|\varepsilon(t)\|$, we make use of a~{\em logarithmic Lipschitz constant} $L_2[f]$ for a~nonlinear function $f(z)$ defined as
\begin{equation}
L_2[f] = \sup_{z\neq w} \frac{\big(f(z)-f(w)\big)^T\big(z-w\big)}{\|z-w\|^2}.
\label{eq:llLc}
\end{equation}
Note that for a~linear function $f(z)=Az$ with a~constant matrix $A$, we have
\[
L_2[f]  
= \sup_{z\neq w}\frac{(z-w)^TA(z-w)}{\|z-w\|^2} 
= \sup_{z\neq 0} \frac{z^TAz}{\|z\|^2} 
= \lambda_{\max}\left(\frac{A+A^T}{2}\right)=:L_2[A],
\]
where $\lambda_{\max}(\cdot)$ denotes the largest eigenvalue of the corresponding matrix. The value 
$L_2[A]$ is also known as the logarithmic norm of $A$, see \cite{Dahl1958,Soed2006}, although it is not a~norm in the usual sense. For example, $L_2[A]<0$ for a~symmetric, negative definite matrix $A$.

The following theorem provides a bound on the DEIM state error $\varepsilon$.

\begin{theorem}\label{th:state_err}
Consider the POD-reduced system \eqref{eq:PODred} with the state $x_{\POD}$ and the POD-DEIM-reduced system \eqref{eq:POD-DEIMred} with the state
$x_{\DEIM}$. Then the state error \mbox{$\varepsilon=x_{\POD} -x_{\DEIM}$} can be estimated as 
\begin{equation}
  \|\varepsilon(t)\| \leq \frac{\Delta_{\rm DEIM}}{L_2[f]} 
	\Bigl(e^{\textstyle{\frac{L_2[f]}{\lambda_{\min}(E)}}t}-1\Bigr),
	\label{eq:est_eps}
\end{equation}
where $L_2[f]$ is the~logarithmic Lipschitz constant of $f(x_{\POD})=A(x_{\POD})x_{\POD}$ defined in \eqref{eq:llLc}, $\lambda_{\min}(E)$ is the smallest eigenvalue of $E$, and 
\begin{equation}
\Delta_{\rm DEIM}=\big\|(S_{\mathcal{K}}^TU_{f_1})^{-1}\big\| \sum_{j=\ell+1}^q \Bigl(\varsigma_j^2(X_{f_1})\Bigr)^{1/2}
\label{eq:beta}
\end{equation}
with the truncated singular values $\varsigma_j(X_{f_1})$ of the DEIM snapshot matrix $X_{f_1}$.
\end{theorem}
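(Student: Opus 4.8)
The plan is to derive a scalar differential inequality for an $E$-weighted norm of the error and then integrate it by Grönwall comparison. First I would subtract the state equation of \eqref{eq:POD-DEIMred} from that of \eqref{eq:PODred}. Since both reduced systems carry the same mass matrix $E$ and the same initial data $x_{\DEIM,0}=x_{\POD,0}$, the error $\varepsilon=x_{\POD}-x_{\DEIM}$ satisfies $\varepsilon(0)=0$ and
\begin{equation*}
\tfrac{\rm d}{{\rm d}t}E\,\varepsilon = A(x_{\POD})x_{\POD}-F(x_{\DEIM})x_{\DEIM}.
\end{equation*}
Writing $f(x)=A(x)x$ for the POD nonlinearity, I would split the right-hand side as
\begin{equation*}
A(x_{\POD})x_{\POD}-F(x_{\DEIM})x_{\DEIM}=\bigl(f(x_{\POD})-f(x_{\DEIM})\bigr)+\bigl(f(x_{\DEIM})-F(x_{\DEIM})x_{\DEIM}\bigr),
\end{equation*}
so that the first term is a difference of one and the same map $f$ (controllable by the logarithmic Lipschitz constant), while $r:=f(x_{\DEIM})-F(x_{\DEIM})x_{\DEIM}$ is the pure DEIM interpolation residual.

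Second, I would bound the residual uniformly by $\|r(t)\|\le\Delta_{\rm DEIM}$. Because $A(x_{\POD})$ and $F(x_{\DEIM})$ differ only in their first block, $r$ reduces to $U_{a_1}^T(I-\mathbb{P})f_1(U_{a_1}\hat a_1)$ padded with zeros, where $\mathbb{P}=U_{f_1}(S_{\mathcal{K}}^TU_{f_1})^{-1}S_{\mathcal{K}}^T$ is the oblique DEIM projector. Using $\|U_{a_1}^T\|=1$ and the standard DEIM projection lemma $\|(I-\mathbb{P})w\|\le\|(S_{\mathcal{K}}^TU_{f_1})^{-1}\|\,\|(I-U_{f_1}U_{f_1}^T)w\|$, it then remains to control the orthogonal projection error $\|(I-U_{f_1}U_{f_1}^T)f_1(U_{a_1}\hat a_1(t))\|$ by the truncated singular values $\sum_{j=\ell+1}^q\varsigma_j(X_{f_1})$. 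This in turn follows from the optimality of the POD basis $U_{f_1}$ for the snapshot matrix $X_{f_1}$, together with the fact that each single-column projection error is dominated by the full truncation error. I expect this uniform-in-$t$ residual bound to be the main obstacle: strictly it is an identity only at the snapshot instants, and one must invoke that the trajectory stays within the snapshot regime so that $\Delta_{\rm DEIM}$ acts as the effective residual amplitude.

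Third, I would carry out the energy estimate. Since $E$ is symmetric positive definite (as established for $\mathcal{E}_{\ODE}$ and inherited through $\tilde U$), testing the error equation with $\varepsilon$ gives $\tfrac{\rm d}{{\rm d}t}\bigl(\tfrac12\varepsilon^TE\varepsilon\bigr)=\varepsilon^T\tfrac{\rm d}{{\rm d}t}E\varepsilon$; applying definition \eqref{eq:llLc} to the $f$-difference (note $\varepsilon=x_{\POD}-x_{\DEIM}$ is exactly the argument difference) and Cauchy--Schwarz to the residual yields
\begin{equation*}
\tfrac{\rm d}{{\rm d}t}\bigl(\tfrac12\varepsilon^TE\varepsilon\bigr)=\varepsilon^T\bigl(f(x_{\POD})-f(x_{\DEIM})\bigr)+\varepsilon^Tr\le L_2[f]\,\|\varepsilon\|^2+\Delta_{\rm DEIM}\,\|\varepsilon\|.
\end{equation*}
Setting $q(t)=(\varepsilon^TE\varepsilon)^{1/2}=\|\varepsilon\|_E$ and using $\lambda_{\min}(E)\|\varepsilon\|^2\le\|\varepsilon\|_E^2$, hence $\|\varepsilon\|\le\|\varepsilon\|_E/\sqrt{\lambda_{\min}(E)}$, I would divide by $q$ to obtain
\begin{equation*}
\tfrac{\rm d}{{\rm d}t}q\le\frac{L_2[f]}{\lambda_{\min}(E)}\,q+\frac{\Delta_{\rm DEIM}}{\sqrt{\lambda_{\min}(E)}},\qquad q(0)=0.
\end{equation*}

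Finally, I would integrate this inequality by comparison with the associated scalar linear ODE, giving $\|\varepsilon(t)\|_E\le\frac{\Delta_{\rm DEIM}\sqrt{\lambda_{\min}(E)}}{L_2[f]}\bigl(e^{L_2[f]t/\lambda_{\min}(E)}-1\bigr)$, and then convert back to the Euclidean norm once more via $\|\varepsilon\|\le\|\varepsilon\|_E/\sqrt{\lambda_{\min}(E)}$ to arrive at \eqref{eq:est_eps}. Besides the uniform residual bound already flagged, the remaining points requiring care are keeping the powers of $\lambda_{\min}(E)$ consistent across the two norm conversions, and the sign of $L_2[f]$: the estimate $\|\varepsilon\|^2\le\|\varepsilon\|_E^2/\lambda_{\min}(E)$ used in the first term is in the correct direction when $L_2[f]\ge0$, while the resulting closed form stays valid and nonnegative for $L_2[f]<0$ because its prefactor and the exponential factor change sign together.
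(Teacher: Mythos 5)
Your proposal follows essentially the same route as the paper's proof: the same error equation with the same splitting into the $f$-difference plus the pure DEIM interpolation residual, the same uniform residual bound $\Delta_{\rm DEIM}$ via the oblique-projector lemma of \cite{ChaS10}, the same $E$-weighted norm with the equivalence \eqref{eq:normeq}, and the same Gronwall integration. The two caveats you flag are well placed rather than defects of your argument: the paper itself only asserts the residual bound up to ``$\lesssim$'', and the step $L_2[f]\,\|\varepsilon\|^2/\|\varepsilon\|_E\leq L_2[f]\,\|\varepsilon\|_E/\lambda_{\min}(E)$ in \eqref{eq:est_doteps} is likewise only valid as written for $L_2[f]\geq 0$, a point the paper passes over silently even though later $\mu<0$.
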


\begin{proof}
Subtracting the POD-DEIM-reduced system \eqref{eq:POD-DEIMred} from the POD-reduced system~\eqref{eq:PODred}, we obtain the following differential equation for the error
\begin{equation}\label{eq:odevarepsilon}
\begin{array}{rclrl}
 \tfrac{\rm d}{{\rm d}t}E\,{\varepsilon}(t) = A_l\,\varepsilon(t)+\begin{bmatrix}
                         U_{a_1}^T f_1(U_{a_1}\tilde{a}_1(t))-\hat{f}_1(\hat{a}_1(t))\\0
                        \end{bmatrix}.
\end{array}
\end{equation}
We consider now a~weighted vector norm
$\|w\|_{E}=\sqrt{w^TEw}$ for $w\in\mathbb{R}^{r+m}$. It is well defined since $E$ is symmetric and positive definite. 
Clearly, as this holds for any two norms in $\mathbb{R}^n$, this norm is equivalent to the Euclidean norm $\|\cdot\|$. More precisely, we have the inequalities
\begin{equation}
\sqrt{\lambda_{\min}(E)}\|w\|\leq \|w\|_{E} \leq \sqrt{\lambda_{\max}(E)}\|w\|.
\label{eq:normeq}
\end{equation}
Using \eqref{eq:odevarepsilon} and the definition of $L_2[f]$, we can estimate
\begin{align*}
  \|\varepsilon(t)\|_{E} & \displaystyle{\tfrac{\rm d}{{\rm d}t}} \|\varepsilon(t)\|_{E} 
  = \varepsilon^T(t)\tfrac{\rm d}{{\rm d}t}E\,{\varepsilon}(t) \\
  &= \varepsilon^T(t) \Bigg(A_l\,\varepsilon(t)+\begin{bmatrix}
                         U_{a_1}^T f_1(U_{a_1}\tilde{a}_1(t))-\hat{f}_1(\hat{a}_1(t))\\0
                        \end{bmatrix} \Bigg) \\
  & =  \varepsilon^T(t) \Bigg( A_l\,\varepsilon(t)+\begin{bmatrix}
                         U_{a_1}^T f_1(U_{a_1}\tilde{a}_1(t))-U_{a_1}^T f_1(U_{a_1}\hat{a}_1(t))\\0
                        \end{bmatrix} \Bigg)\\
  & \quad + \varepsilon^T(t) \begin{bmatrix}U_{a_1}^T f_1(U_{a_1}\hat{a}_1(t))-\hat{f}_1(\hat{a}_1(t))\\0\end{bmatrix} \\
  & = \big(x_{\POD}(t)-x_{\DEIM}(t)\big)^T\big(A(x_{\POD}(t))x_{\POD}(t)- 
  A(x_{\DEIM}(t))x_{\DEIM}(t)\big) \\
  & \quad + \big(\tilde{a}_1(t)-\hat{a}_1(t)\big)^T\big(U_{a_1}^T f_1(U_{a_1}\hat{a}_1(t))-\hat{f}_1(\hat{a}_1(t))\big)\\
	&\leq L_2[f]\|\varepsilon(t)\|^2 +\Delta_{\rm DEIM}\|\varepsilon(t)\|.
\end{align*}
In the last inequality, we used the estimate for the DEIM error
\begin{align*}
\big\|U_{a_1}^T f_1(U_{a_1}\hat{a}_1(t))-\hat{f}_1(\hat{a}_1(t))\big\| 
    & \leq \big\|(S_{\mathcal{K}}^TU_{f_1})^{-1}\big\| 
    \big\|(I-U_{f_1}^{}U_{f_1}^T)f_1(U_{a_1}\hat{a}_1(t))\big\|\\
& \lesssim \displaystyle{\big\|(S_{\mathcal{K}}^TU_{f_1})^{-1}\big\|
\Bigl(\sum_{j=\ell+1}^q\varsigma_j^2(\mathcal{X}_{f_1})\Bigr)^{1/2}=:\Delta_{\rm DEIM}}
\end{align*}
derived in \cite{ChaS10}. Taking into account \eqref{eq:normeq}, we obtain that
\begin{equation}
 \displaystyle{\textstyle{\frac{\rm d}{{\rm d}t}}} \|\varepsilon(t)\|_{E} \leq 
\displaystyle{L_2[f]\frac{\|\varepsilon(t)\|^2\,}{\|\varepsilon(t)\|_{E}}
+\Delta_{\rm DEIM}\frac{\|\varepsilon(t)\|\;\;}{\|\varepsilon(t)\|_{E}} \leq
\frac{L_2[f]}{\lambda_{\min}(E)} \|\varepsilon(t)\|_{E}
	+\frac{\Delta_{\rm DEIM}}{\sqrt{\lambda_{\min}(E)}}}.
\label{eq:est_doteps}
\end{equation}
Further, Gronwall's inequality \cite{Pach98} yields
\[
\|\varepsilon(t)\|_{E} \leq   \displaystyle{\int_0^t \frac{\Delta_{\rm DEIM}}{\sqrt{\lambda_{\min}(E)}} 
e^{\textstyle{\int_s^t} \frac{L_2[f]}{\lambda_{\min}(E)}\,d\tau}  \,ds}
=\displaystyle{\frac{\Delta_{\rm DEIM} \sqrt{\lambda_{\min}(E)}}{L_2[f]} \Bigl(
e^{\textstyle{\frac{L_2[f]}{\lambda_{\min}(E)}}t}-1\Bigr)}.
\]
Finally, we use the norm equivalence \eqref{eq:normeq} once again and obtain \eqref{eq:est_eps}. 
\end{proof}

Next, we present a~computable bound on the logarithmic Lipschitz constant $L_2[f]$.

\begin{theorem} \label{th:L2est}
The logarithmic Lipschitz constant $L_2[f]$ for $f(x_{\POD})=A(x_{\POD})x_{\POD}$ can be estimated as $L_2[f] \leq \min(\mu_1,\mu_2)$ with
\begin{equation}
\arraycolsep=2pt
\begin{array}{rcl}
\mu_1 & = & -m_{\nu} \lambda_{\max}(U^TC_d^TM_fC_d^{}U), \\
\mu_2 & = & \lambda_{\max}(A_l) -m_{\nu_C} \lambda_{\max}(U_{a_1}^TC_1^TM_{f,1}C_1^{}U_{a_1}^{}),
\end{array}
\label{eq:mu12}
\end{equation}
where the matrices $M_f$ and $M_{f,1}$ have the entrees 
\[
\displaystyle{
(M_f)_{kl} = \int_{\varOmega} \varphi_l^f\cdot\varphi_k^f\,{\rm d}\xi, \qquad
(M_{f,1})_{kl} = \int_{\varOmega_{1}} \varphi_l^f\cdot\varphi_k^f\,{\rm d}\xi}, \qquad 
k,l=1,\ldots,n_f.
\]
\end{theorem}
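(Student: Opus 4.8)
The plan is to exploit the factored representations \eqref{eq:CMnuC} and the reduced form $A(x_{\POD})=-U^TK(Ux_{\POD})U$ from \eqref{eq:tildeA}, so that the whole estimate reduces to a pointwise monotonicity statement for the reluctivity, and matrix eigenvalues enter only at the very end. Writing $f(z)=A(z)z=-U^TC_d^TM_\nu(C_dUz)C_dUz$ and abbreviating $b=C_dUz$, $\hat b=C_dUw$, the numerator in the definition \eqref{eq:llLc} collapses, since $(z-w)^TU^TC_d^T=(b-\hat b)^T$, to
\[
\bigl(f(z)-f(w)\bigr)^T(z-w)=-(b-\hat b)^T\bigl(M_\nu(b)\,b-M_\nu(\hat b)\,\hat b\bigr).
\]
First I would lift this to the level of the reconstructed face fields $B=\sum_j b_j\varphi_j^f$ and $\hat B=\sum_j\hat b_j\varphi_j^f$ (both $\mathbb{R}^3$-valued), using the entrywise definition of $M_\nu$, obtaining
\[
(b-\hat b)^T\bigl(M_\nu(b)b-M_\nu(\hat b)\hat b\bigr)=\int_\varOmega\bigl(\nu(\xi,\|B\|)B-\nu(\xi,\|\hat B\|)\hat B\bigr)\cdot(B-\hat B)\,{\rm d}\xi .
\]

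The key step, which I expect to be the main obstacle, is to lift the \emph{scalar} strong monotonicity of $\zeta\mapsto\nu(\xi,\zeta)\zeta$ (Assumption~\ref{ass:assumptions}~\ref{ass:reluctivity}) and Remark~\ref{rem:reluct_wind}~\ref{rem:reluct})) to a genuinely \emph{vector-valued} monotonicity estimate in $\mathbb{R}^3$, uniformly in $\xi$:
\[
\bigl(\nu(\xi,\|v\|)v-\nu(\xi,\|w\|)w\bigr)\cdot(v-w)\ge m_\nu\|v-w\|^2\qquad\text{for all }v,w\in\mathbb{R}^3 .
\]
I would establish this by noting that $v\mapsto\nu(\xi,\|v\|)v=\nabla_v\vartheta(\xi,\|v\|^2)$ with $\vartheta$ from \eqref{eq:gamma}, so the claim is equivalent to $m_\nu$-strong convexity of $\vartheta(\xi,\|\cdot\|^2)$. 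Its Hessian has the transverse eigenvalue $\nu(\xi,\|v\|)$ (multiplicity two) and the radial eigenvalue $\tfrac{\rm d}{{\rm d}\zeta}(\nu(\xi,\zeta)\zeta)\big|_{\zeta=\|v\|}$; scalar strong monotonicity forces both to be $\ge m_\nu$ (the transverse bound $\nu(\xi,\|v\|)\ge m_\nu$ already follows from the monotonicity inequality tested against $\zeta_2=0$), and for merely measurable $\nu_C$ one reads the strong convexity off the first-order difference form rather than the Hessian. Integrating over $\varOmega$ and recognizing $\int_\varOmega\|B-\hat B\|^2\,{\rm d}\xi=(b-\hat b)^TM_f(b-\hat b)$ yields
\[
(b-\hat b)^T\bigl(M_\nu(b)b-M_\nu(\hat b)\hat b\bigr)\ge m_\nu(b-\hat b)^TM_f(b-\hat b).
\]

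Substituting $b-\hat b=C_dU(z-w)$ back gives $(f(z)-f(w))^T(z-w)\le-m_\nu(z-w)^TU^TC_d^TM_fC_dU(z-w)$; dividing by $\|z-w\|^2$ and bounding the Rayleigh quotient of the symmetric positive semidefinite Gram matrix $U^TC_d^TM_fC_dU$ by its extreme eigenvalue produces the first bound $\mu_1$. For the sharper alternative $\mu_2$ I would use the additive splitting $K(a)=K_l+K_n(a)$ behind \eqref{eq:Atildex}, so that $f(z)=A_lz+g(z)$, where $g$ vanishes in its $a_{21}$-block and equals $U_{a_1}^Tf_1(U_{a_1}\tilde z_1)$ in its $a_1$-block. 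The symmetric linear part contributes at most $\lambda_{\max}(A_l)\|z-w\|^2$ through the identity $L_2[A_l]=\lambda_{\max}(\tfrac12(A_l+A_l^T))$ noted after \eqref{eq:llLc}; since the nonlinearity $K_n$ is supported on the conducting subdomain only, repeating the monotonicity argument there with the conducting constant $m_{\nu_C}$ and the restricted mass matrix $M_{f,1}$, and tracking that only the $a_1$-block enters through $C_1U_{a_1}$, produces the conducting Gram matrix $U_{a_1}^TC_1^TM_{f,1}C_1U_{a_1}$. Adding the two contributions and taking the minimum of the two admissible estimates gives $L_2[f]\le\min(\mu_1,\mu_2)$. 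The routine parts are the algebraic reductions and the Weyl-type eigenvalue bookkeeping; the genuine content is the scalar-to-vector monotonicity lift carried out uniformly in $\xi$.
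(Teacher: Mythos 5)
Your proposal follows essentially the same route as the paper's proof: the same additive splitting $f=A_l\,\cdot+{}$nonlinearity with $L_2[A_l]=\lambda_{\max}(A_l)$, the same identification of the $L^2$-norm of the reconstructed curl field with the Gram matrices $U^TC_d^TM_f^{}C_d^{}U$ and $U_{a_1}^TC_1^TM_{f,1}^{}C_1^{}U_{a_1}^{}$, and the same final Rayleigh-quotient step. The only substantive difference is that the scalar-to-vector monotonicity lift, which you propose to prove pointwise via the Hessian of $\vartheta(\xi,\|\cdot\|^2)$ (with the difference-quotient fallback for merely measurable $\nu_C$), is not reproved in the paper at all: the authors simply invoke \cite[Lem.~3]{ChiRS23} in the form $\bigl\langle \cAl_{11}(\psi)-\cAl_{11}(\phi),\psi-\phi\bigr\rangle\ge m_\nu\|\nabla\times(\psi-\phi)\|^2_{L^2(\varOmega;\mathbb{R}^3)}$, which is exactly the integrated version of your pointwise inequality; your sketch of that lemma is sound, so this is a matter of self-containment rather than a different method. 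One caveat, which your write-up shares with the paper's own proof: from $(f(z)-f(w))^T(z-w)\le -m_\nu\,(z-w)^TG(z-w)$ with $G\succeq 0$ symmetric, taking the supremum over $z\neq w$ yields $L_2[f]\le -m_\nu\,\lambda_{\min}(G)$, not $-m_\nu\,\lambda_{\max}(G)$, since the negative prefactor turns the supremum of the Rayleigh quotient into an infimum; so the ``extreme eigenvalue'' needed in your last step is the smallest one, and the constants $\mu_1,\mu_2$ as stated in \eqref{eq:mu12} are justified by this argument only with $\lambda_{\max}$ replaced by $\lambda_{\min}$ of the respective Gram matrices.
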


\begin{proof}
Consider the operator $\cAl_{11}$ in \eqref{eq:A11}. By \cite[Lem.~3]{ChiRS23}, 
for all \mbox{$\psi,\phi\in X_0(\curl,\varOmega,\varOmega_C)$}, it holds that
\[
\bigl\langle \cAl_{11}(\psi)-\cAl_{11}(\phi),\psi-\phi\bigr\rangle
\geq m_\nu \, \|\nabla\times (\psi-\phi)\|_{L^2(\varOmega;\mathbb{R}^3)}^2,
\]
where $m_\nu$ is the monotonicity constant of $\nu(\cdot,\zeta)\zeta$.
Using this inequality and \eqref{eq:K}, we obtain for $f(x_{\POD})=A(x_{\POD})x_{\POD}=-U^TK(Ux_{\POD})\,Ux_{\POD}$ that
\begin{align}
\big(f(x_{\POD})\bigr.& - \bigl. f(x_{\DEIM})\big)^T\big(x_{\POD}-x_{\DEIM}\big) \nonumber\\
& =  -\big(K(Ux_{\POD})Ux_{\POD}-K(Ux_{\DEIM})Ux_{\DEIM}\big)^T\big(Ux_{\POD}-Ux_{\DEIM}\big)\nonumber\\[1mm]
& = -\Bigl\langle\cAl_{11}\bigl(\sum_{j=1}^{r+m} \tilde{\alpha}_j\tilde{\psi}_j\bigr)-
	\cAl_{11}\bigl(\sum_{j=1}^{r+m} \hat{\alpha}_j\tilde{\psi}_j\bigr), 
	\sum_{j=1}^{r+m} (\tilde{\alpha}_j\!-\!\hat{\alpha}_j)\tilde{\psi}_j\Bigr\rangle \nonumber\\
& \leq -m_\nu \Bigl\|\nabla\times \sum_{j=1}^{r+m} (\tilde{\alpha}_j-\hat{\alpha}_j)\tilde{\psi}_j\Bigr\|_{L^2(\varOmega;\mathbb{R}^3)}^2 \label{eq:estLip} \\
& = -m_\nu \int_{\varOmega}\Bigl( \nabla\times\sum_{j=1}^{r+m} (\tilde{\alpha}_j-\hat{\alpha}_j)\tilde{\psi}_j\Bigr)\cdot
	\Bigl( \nabla\times\sum_{j=1}^{r+m} (\tilde{\alpha}_j-\hat{\alpha}_j)\tilde{\psi}_j\Bigr)\, d\xi \nonumber   
\\[1mm]
& = -m_\nu \bigl(x_{\POD}-x_{\DEIM}\bigr)^TU^TC_d^TM_f\,C_d^{}\,U\bigl(x_{\POD}-x_{\DEIM}\bigr)\nonumber
\end{align}
for all $x_{\POD}=[\tilde{\alpha}_1,\ldots,\tilde{\alpha}_{r+m}]^T$ and $x_{\DEIM}=[\hat{\alpha}_1,\ldots,\hat{\alpha}_{r+m}]^T$.
Therefore,
\begin{align*}
L_2[f] & = \sup_{x_{\sPOD}\neq x_{\sDEIM}} \frac{\big(f(x_{\POD})-f(x_{\DEIM})\big)^T\big(x_{\POD}-x_{\DEIM} \big)}{\|x_{\POD}-x_{\DEIM}\|^2} \\
& \leq -m_\nu\, \sup_{x_{\sPOD}\neq x_{\sDEIM}} \frac{\bigr(x_{\POD}-x_{\DEIM})\big)^T U^TC_d^TM_f\,C_d^{}\,U \big(x_{\POD}-x_{\DEIM}\big)}{\|x_{\POD}-x_{\DEIM}\|^2}\\
& = -m_{\nu}\,\lambda_{\max}\bigl(U^TC_d^TM_f\,C_d^{}\,U\bigr)=:\mu_1.
\end{align*}  

Alternatively, we can consider the additive decomposition \eqref{eq:Atildex}. Then we have
\begin{align*}
 L_2[f] 
  & \leq L_2[A_l] + \sup_{\tilde{a}_1\neq\hat{a}_1} 
 	\frac{\big(f_1(U_{a_1}\tilde{a}_1)-f_1(U_{a_1}\hat{a}_1)\big)^T\big(U_{a_1}\tilde{a}_1-U\hat{a}_1\big)}{\|\tilde{a}_1-\hat{a}_1\|^2}.
\end{align*}  

Since $A_l$ is symmetric, it holds $L_2[A_l]=\lambda_{\max}(A_l)$. Furthermore, similarly to \eqref{eq:estLip}, we can show that
\[
\big(f_1(U_{a_1}\tilde{a}_1)\!-\!f_1(U_{a_1}\hat{a}_1)\big)^T\big(U_{a_1}\tilde{a}_1\!-\!U\hat{a}_1\big)
\leq \! -m_{\nu_C} (\tilde{a}_1\!-\!\hat{a}_1)^TU_{a_1}^TC_1^TM_{f,1}C_1^{}U_{a_1}^{}(\tilde{a}_1\!-\!\hat{a}_1),
\]
where $m_{\nu_C}$ is the monotonicity constant of $\nu_C(\zeta)\zeta$. Then
\begin{align*}
L_2[f] & \leq \lambda_{\max}(A_l) -m_{\nu_C}\,\displaystyle{\sup_{\tilde{a}_1\neq\hat{a}_1} 
	\frac{(\tilde{a}_1-\hat{a}_1)^T  U_{a_1}^TC_1^TM_{f,1}C_1^{}U_{a_1}^{}(\tilde{a}_1-\hat{a}_1)
	}{\|\tilde{a}_1-\hat{a}_1\|^2} }\\
& =  \lambda_{\max}(A_l) -m_{\nu_C}\,\lambda_{\max}\bigl(U_{a_1}^TC_1^TM_{f,1}C_1^{}U_{a_1}^{}\bigr)=:\mu_2.
\end{align*}  
Thus, $L_2[f]\leq \min\{\mu_1,\mu_2 \}$. 
\end{proof}

Note that since the matrices $M_f$ and $M_{f,1}$ are symmetric, positive definite and the matrix $A_l$ is symmetric, negative semidefinite, we have $\mu=\min\{\mu_1,\mu_2\}<0$. 

\begin{remark}\label{rem:logLip2D}
In the 2D case, the logarithmic Lipschitz constant is estimated as $L_2[f] \leq \min(\mu_1,\mu_2)$ with
\begin{align*}
	\mu_1& = -m_{\nu}\lambda_{\max}(U^TK_LU), \\
	\mu_2& = \lambda_{\max}(A_{l})-m_{\nu_C}\lambda_{\max}(U_{a_1}^TK_{L,1}^{} U_{a_1}^{}),
\end{align*}
	where 
\[
	U = \begin{bmatrix} U_{a_1} &\enskip 0 \\ -Y^{}(Y^TK_{22}Y^{})^{-1}Y^TK_{21}U_{a_1} &\enskip
	(I-Y^{}(Y^TK_{22}Y^{})^{-1}Y^TK_{22})Z\end{bmatrix}
\]
	with $\mbox{\rm im}(Y)=\mbox{\rm ker}(X_2^T)$, $Y^TY=I$  and $Z=X_2^{}(X_2^TX_2^{})^{-1/2}$, and the stiffness matrices $K_L$ and $K_{L,1}$ have the entrees 
\[
	\arraycolsep=2pt
	\begin{array}{rcll}
	(K_L)_{kl} & = & \displaystyle{
		\int_{\varOmega} \nabla\varphi_l\cdot\nabla\varphi_k\,d\xi,} & \qquad k,l=1,\ldots,n_a, \\[3mm]
	(K_{L,1})_{kl} & = & \displaystyle{\int_{\varOmega_C} \nabla\varphi_l\cdot\nabla\varphi_k\,d\xi,} & \qquad k,l=1,\ldots,n_1,
	\end{array}
\]
	with the Lagrange basis functions $\varphi_k$. 
\end{remark}

Combining estimate \eqref{eq:est_doteps} with Theorem~\ref{th:L2est}, we obtain the following estimate for the DEIM state error $\varepsilon$.

\begin{theorem}\label{th:state_err2}
	Consider the POD-reduced system \eqref{eq:PODred} with the state $x_{\POD}$ and the POD-DEIM-reduced system \eqref{eq:POD-DEIMred} with the state $x_{\DEIM}$. Then the DEIM state error \linebreak \mbox{$\varepsilon=x_{\POD} -x_{\DEIM}$} can be estimated as 
	\begin{equation}
	\|\varepsilon(t)\| \leq \frac{\Delta_{\rm DEIM}}{\mu}\Bigl(e^{\textstyle{\frac{\mu}{\lambda_{\min}(E)}}t}-1\Bigr)=:\theta(t),
	\label{eq:epsilon}
	\end{equation}
	where $\mu=\min\{\mu_1,\mu_2\}$ with $\mu_1$ and $\mu_2$ as in \eqref{eq:mu12} and $\Delta_{\rm DEIM}$ is given in \eqref{eq:beta}. 
\end{theorem}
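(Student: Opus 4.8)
The plan is to combine the differential inequality \eqref{eq:est_doteps}, which was already established inside the proof of Theorem~\ref{th:state_err}, with the explicit upper bound on the logarithmic Lipschitz constant supplied by Theorem~\ref{th:L2est}, and then to re-run Gronwall's inequality. Recall that \eqref{eq:est_doteps} reads
\[
\tfrac{\rm d}{{\rm d}t}\|\varepsilon(t)\|_{E} \leq \frac{L_2[f]}{\lambda_{\min}(E)}\|\varepsilon(t)\|_{E} + \frac{\Delta_{\rm DEIM}}{\sqrt{\lambda_{\min}(E)}},
\]
and that Theorem~\ref{th:L2est} gives $L_2[f]\leq \mu=\min\{\mu_1,\mu_2\}$ with $\mu_1,\mu_2$ as in \eqref{eq:mu12}. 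The idea is to insert this bound \emph{before} integrating, so that the final estimate comes out with $\mu$ in place of $L_2[f]$.

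First I would substitute $L_2[f]\leq\mu$ into the differential inequality. Since $E$ is symmetric positive definite, $\lambda_{\min}(E)>0$, and by construction $\|\varepsilon(t)\|_{E}\geq 0$; multiplying $L_2[f]\leq\mu$ by the nonnegative factor $\|\varepsilon(t)\|_{E}/\lambda_{\min}(E)$ therefore preserves the inequality and yields
\[
\tfrac{\rm d}{{\rm d}t}\|\varepsilon(t)\|_{E} \leq \frac{\mu}{\lambda_{\min}(E)}\|\varepsilon(t)\|_{E} + \frac{\Delta_{\rm DEIM}}{\sqrt{\lambda_{\min}(E)}}.
\]
Next I would apply Gronwall's inequality exactly as in the proof of Theorem~\ref{th:state_err}, now with $\mu$ instead of $L_2[f]$, obtaining $\|\varepsilon(t)\|_{E}\leq \frac{\Delta_{\rm DEIM}\sqrt{\lambda_{\min}(E)}}{\mu}\bigl(e^{\mu t/\lambda_{\min}(E)}-1\bigr)$, and finally pass from the weighted norm back to the Euclidean norm via the lower bound in the norm equivalence \eqref{eq:normeq}. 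This produces exactly the claimed estimate \eqref{eq:epsilon} with $\theta(t)$ as stated.

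The only delicate point — and the step where I expect the sole scope for error — is the sign bookkeeping in the substitution: as noted right after Theorem~\ref{th:L2est}, both $L_2[f]$ and $\mu$ are negative, so one must resist the instinct that enlarging a negative quantity reverses the estimate. What is actually used is only that the coefficient $\|\varepsilon(t)\|_{E}/\lambda_{\min}(E)$ multiplying $L_2[f]$ is nonnegative, while the additive $\Delta_{\rm DEIM}$ term is left untouched. Performing the replacement at the level of the differential inequality, rather than in the final closed-form bound, also has the advantage of sidestepping any need to verify monotonicity of the map $L\mapsto \frac{1}{L}\bigl(e^{Lt/\lambda_{\min}(E)}-1\bigr)$, which would otherwise be required to justify substituting $\mu$ into the integrated expression directly.
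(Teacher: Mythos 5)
Your proposal is correct and follows essentially the same route as the paper: the paper's proof likewise inserts the bound $L_2[f]\leq\mu$ from Theorem~\ref{th:L2est} directly into the differential inequality \eqref{eq:est_doteps} and then invokes the norm equivalence \eqref{eq:normeq} and Gronwall's inequality exactly as in the proof of Theorem~\ref{th:state_err}. Your remark about performing the substitution at the level of the differential inequality (where only nonnegativity of the coefficient $\|\varepsilon(t)\|_{E}/\lambda_{\min}(E)$ matters) rather than in the integrated bound is a sound observation, but it is the same argument the paper uses.
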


\begin{proof}
	Using Theorem~\ref{th:L2est},  we obtain from \eqref{eq:est_doteps} that
\[
 \displaystyle{\textstyle{\frac{\rm d}{{\rm d}t}}} \|\varepsilon(t)\|_{E} \leq 
\displaystyle{\frac{\mu}{\lambda_{\min}(E)} \|\varepsilon(t)\|_{E}
	+\frac{\Delta_{\rm DEIM}}{\sqrt{\lambda_{\min}(E)}}}.
\]
Then the DEIM state error  \eqref{eq:epsilon} follows from the norm equivalence \eqref{eq:normeq} and Gronwall's inequality
similarly to the proof of Theorem~\ref{th:state_err}. 
\end{proof}

In order to compute the state error bound $\theta(t)$ in \eqref{eq:epsilon}, we have to calculate the corresponding eigenvalues
of the reduced matrices $E, A_l, U^TC_d^TM_fC_d^{}U\in\mathbb{R}^{(r+m)\times(r+m)}$ and $U_{a_1}^TC_1^TM_{f,1}C_1^{}U_{a_1}^{}\in\mathbb{R}^{r\times r}$. Thus, the computational cost does not depend on the dimension $n_e+m$ of the original problem \eqref{eq:FEM-DAE}.

\section{Numerical results}
\label{sec:num}

In this section, we present some results of nume\-ri\-cal experiments for a single-phase 2D transformer model with an~iron core and two coils of wire in an air domain as described in \cite{KerBS17, Schoeps11}. For the mesh generation and the FEM discretization, we used the software package \mbox{FEniCS}\footnote{http://fenicsproject.org}. 
The time integration of the full models is done by the sparse DAE solver PyDAESI, 
whereas the reduced-order dense systems are solved by the implicit differential-algebraic (IDA) solver from the simulation package Assimulo\footnote{http://www.jmodelica.org/assimulo}.
Both solvers are based on the backward differentiation formula methods. We use them according to the sparse or dense structure of the problem. The computations were performed on a computer with an Intel(R) Core(TM) i7-3720QM processor with 2.60GHz.

The winding function $\chi$ has two components resulting in input dimension \mbox{$m=2$}.
The FEM discretization with linear Lagrange elements on a uniform triangular mesh yields  
a~semidiscretized MQS system \eqref{eq:FEM-DAE}, \eqref{eq:FEM-DAE-matr} with a~positive definite matrix \mbox{$K(a)\in\mathbb{R}^{n_a\times n_a}$} with $n_a=51543=n_1+n_2$, where $n_1=19688$ and $n_2=31855$ are the numbers of conductive and non-conductive state components, respectively. 

The snapshots were collected by solving this system with the training input $u(t)$ and the reduced models were tested by using the input $u_{\rm test}(t)$ given by
\begin{equation} \label{eq:NonlinExinput}
u(t)=\begin{bmatrix}
45.5 \cdot 10^3 \sin(900\pi t)\\
77 \cdot 10^3 \sin(1700\pi t)
\end{bmatrix},\quad
u_{\rm test}(t)=\begin{bmatrix}
46.5 \cdot 10^3 \sin(1010\pi t)\\
78 \cdot 10^3 \sin(1900\pi t)
\end{bmatrix}.
\end{equation}
The reduced dimensions were chosen as $r=35$ for the POD method and $\ell=9$ for the DEIM method with the relations \vspace*{-1.5mm}
\begin{equation*}
\frac{\varsigma_{36}(X_{a_1})}{\varsigma_1(X_{a_1})}=1.14\cdot 10^{-7},\qquad \frac{\varsigma_{10}(X_{f_1})}{\varsigma_1(X_{f_1})}=1.40\cdot 10^{-3} \vspace*{-1.5mm}
\end{equation*} 
for the singular values $\varsigma_j(X_{a_1})$ and $\varsigma_j(X_{f_1})$ of the POD and DEIM snapshot matrices $X_{a_1}$ and $X_{f_1}$, respectively. The resulting reduced models have dimension $r+m=37$. 

\begin{table}[t]
\caption{Constants for the state error bound $\theta$}
	\label{tab:DEIMerrorconst}
	\centering
    \begin{tabular}{lll}
		\toprule
		$\Delta_{\rm DEIM}=4.1467$  &	$\qquad\lambda_{\min}(E)=0.1969$ & $\qquad\|C\|=1.591\cdot 10^{-3}$\\
		$\mu_1=-7.7916$ &	$\qquad m_\nu=1,\enskip m_{\nu_C}=396.2$  & $\qquad \lambda_{\max}(U^TK_L\,U)=7.7916$\\
		$\mu_2=-2755$   & 
		$\qquad \lambda_{\max}(A_l)=-1.4307\cdot10^{-4}$ 
		&	$\qquad \lambda_{\max}(U_{a_1}^TK_{L,1}\, U_{a_1}^{})=6.9569$\\
		\botrule
	\end{tabular}
\end{table}

\begin{figure}[t] 
 \centering
 \includegraphics{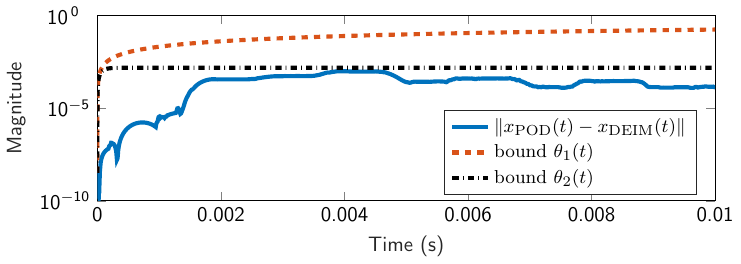} 
 \caption{Absolute error in the state and error bounds}
 \label{fig:NumExNonstateerror}
\end{figure}

\begin{figure}[t] 
\centering
\includegraphics{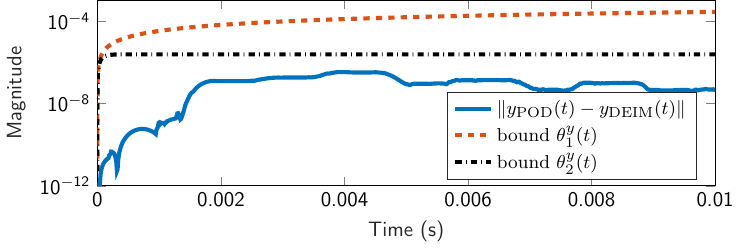} 
\caption{Absolute error in the output and error bounds}
\label{fig:NumExNonouterror}
\end{figure}

In Table~\ref{tab:DEIMerrorconst}, we collect the constants involved in the DEIM state error bound $\theta$ in~\eqref{eq:epsilon} required for the passivi\-ty enforcement of the POD-DEIM system \eqref{eq:POD-DEIMred}. Figure~\ref{fig:NumExNonstateerror} shows the absolute error ${\|\varepsilon(t)\|=\|x_{\POD}(t)-x_{\DEIM}(t)\|}$ and 
the state error bounds~$\theta_i(t)$ as in \eqref{eq:epsilon} computed with $\mu=\mu_i$, $i=1,2$, as in Remark~\ref{rem:logLip2D}. One can see that 
the error bound $\theta_1(t)$ overestimates the true error by about two orders of magnitude, while the error bound $\theta_2(t)$ is quite sharp.
In Figure~\ref{fig:NumExNonouterror}, we present the output error $\|y_{\POD}(t)-y_{\DEIM}(t)\|$ and the output error bounds $\theta^y_i(t)=\|C\|\theta_i(t)$ for $i=1,2$. 

\begin{figure}[tbp] 
\centering
\hspace*{2mm}
\includegraphics{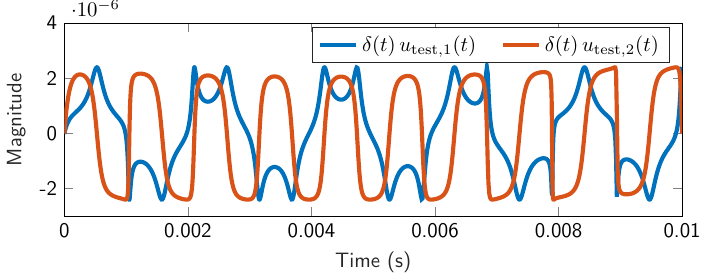} 
\caption{Components of the perturbation $\delta(t)u_{\rm test}(t)$} 
\label{fig:NumExNondelta}
\end{figure}

\begin{figure}[ht] 
\centering
\includegraphics{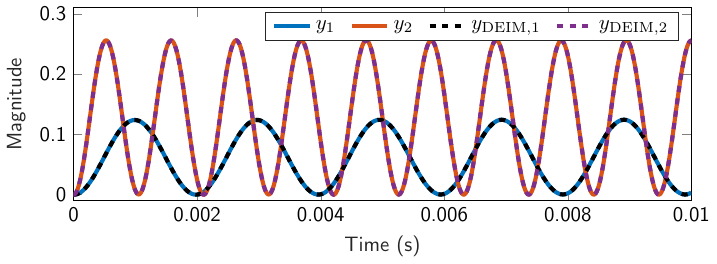} 
\caption{Output components of the original and POD-DEIM-reduced models}
\label{fig:NumExNonout}
\end{figure}

To enforce passivity, the output of the POD-DEIM model \eqref{eq:POD-DEIMred} with $u(t)=u_{\rm test}(t)$ as in \eqref{eq:NonlinExinput} is perturbed by $\delta(t) u_{\rm test}(t)$, where $\delta(t)$ is given in \eqref{eq:delta}.
Figure~\ref{fig:NumExNondelta} shows the components of the perturbation $\delta(t) u_{\rm test}(t)$. In Figure~\ref{fig:NumExNonout}, we present the output components $y_i$ and $y_{\delta, i}$, $i=1,2$, of the original and perturbed POD-DEIM systems. The relative errors $\Delta(y,y_{\DEIM})$ and $\Delta(y,y_{\delta})$ for the POD-DEIM and perturbed POD-DEIM  systems are given in Figure~\ref{fig:NumExNonerlerrordelta}. Here, the relative error is defined as 
\[
\Delta(y, z)=\sqrt{\sum_{i=1}^2
	\left(\frac{y_i(t)-z_i(t)}{\max\limits_{\tau\in [0,0.01]} |y_i(\tau)|}\right)^2}.
\]
We see that the error of the perturbed system \eqref{eq:delta-sys}
is only slightly larger than that of the POD-DEIM system \eqref{eq:POD-DEIMred}. 

\begin{figure}[ht] 
\centering
\includegraphics{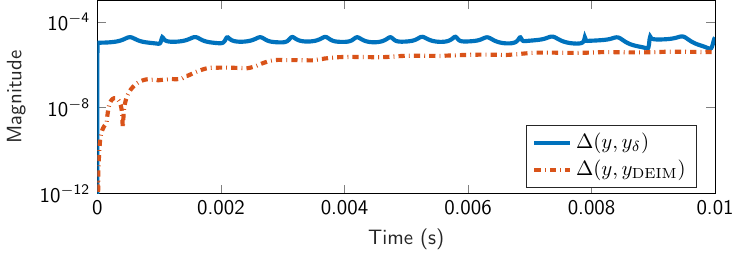} 
		\caption{Relative errors in the output for the POD-DEIM and perturbed POD-DEIM systems} 
\label{fig:NumExNonerlerrordelta}
\end{figure}

\section{Conclusion}
\label{sec:concl}

In this paper, we have studied MQS systems arising in the simulation of 
low-frequency electromagnetic devices coupled to electrical circuits. 
Passivity of such systems in the strong and weak formulations has been analyzed
by defining a~storage function which describes the magnetic energy of the system. 
A~FEM discretization of the MQS field problems on 3D domains leads to a~singular DAE system. 
We have investigated the structural properties of the resulting nonlinear DAE system and
presented a~new regularization approach based on projecting out singular state components. 
For this purpose, a~condensed form for the underlying system pencil has been derived which 
allows to decompose the semidiscretized MQS system into the regular and singular parts and 
to determine the subspaces corresponding to the infinite and zero generalized eigenvalues.
This makes it possible to transform the regularized system into the ODE form and to apply 
the POD-DEIM model reduction method. For the FEM model, its regularized formulation and the POD model, we have proved that passivity is preserved. Furthermore, for the POD-DEIM model, we have presented a~passivity enforcement method based on perturbation of the output which depends on the errors introduced by DEIM. Numerical experiments for a~model problem demonstrate the performance of the presented model reduction methods and the passivity enforcement technique.

\section*{Acknowledgments} We would like to thank Caren Tischendorf for providing the sparse DAE solver PyDAESI.



\begin{thebibliography}{33}
	\ifx \bisbn   \undefined \def \bisbn  #1{ISBN #1}\fi
	\ifx \binits  \undefined \def \binits#1{#1}\fi
	\ifx \bauthor  \undefined \def \bauthor#1{#1}\fi
	\ifx \batitle  \undefined \def \batitle#1{#1}\fi
	\ifx \bjtitle  \undefined \def \bjtitle#1{#1}\fi
	\ifx \bvolume  \undefined \def \bvolume#1{\textbf{#1}}\fi
	\ifx \byear  \undefined \def \byear#1{#1}\fi
	\ifx \bissue  \undefined \def \bissue#1{#1}\fi
	\ifx \bfpage  \undefined \def \bfpage#1{#1}\fi
	\ifx \blpage  \undefined \def \blpage #1{#1}\fi
	\ifx \burl  \undefined \def \burl#1{\textsf{#1}}\fi
	\ifx \doiurl  \undefined \def \doiurl#1{\url{https://doi.org/#1}}\fi
	\ifx \betal  \undefined \def \betal{\textit{et al.}}\fi
	\ifx \binstitute  \undefined \def \binstitute#1{#1}\fi
	\ifx \binstitutionaled  \undefined \def \binstitutionaled#1{#1}\fi
	\ifx \bctitle  \undefined \def \bctitle#1{#1}\fi
	\ifx \beditor  \undefined \def \beditor#1{#1}\fi
	\ifx \bpublisher  \undefined \def \bpublisher#1{#1}\fi
	\ifx \bbtitle  \undefined \def \bbtitle#1{#1}\fi
	\ifx \bedition  \undefined \def \bedition#1{#1}\fi
	\ifx \bseriesno  \undefined \def \bseriesno#1{#1}\fi
	\ifx \blocation  \undefined \def \blocation#1{#1}\fi
	\ifx \bsertitle  \undefined \def \bsertitle#1{#1}\fi
	\ifx \bsnm \undefined \def \bsnm#1{#1}\fi
	\ifx \bsuffix \undefined \def \bsuffix#1{#1}\fi
	\ifx \bparticle \undefined \def \bparticle#1{#1}\fi
	\ifx \barticle \undefined \def \barticle#1{#1}\fi
	\bibcommenthead
	\ifx \bconfdate \undefined \def \bconfdate #1{#1}\fi
	\ifx \botherref \undefined \def \botherref #1{#1}\fi
	\ifx \url \undefined \def \url#1{\textsf{#1}}\fi
	\ifx \bchapter \undefined \def \bchapter#1{#1}\fi
	\ifx \bbook \undefined \def \bbook#1{#1}\fi
	\ifx \bcomment \undefined \def \bcomment#1{#1}\fi
	\ifx \oauthor \undefined \def \oauthor#1{#1}\fi
	\ifx \citeauthoryear \undefined \def \citeauthoryear#1{#1}\fi
	\ifx \endbibitem  \undefined \def \endbibitem {}\fi
	\ifx \bconflocation  \undefined \def \bconflocation#1{#1}\fi
	\ifx \arxivurl  \undefined \def \arxivurl#1{\textsf{#1}}\fi
	\csname PreBibitemsHook\endcsname
	
	\bibitem[\protect\citeauthoryear{Bossavit}{1998}]{Boss98}
	\begin{bbook}
		\bauthor{\bsnm{Bossavit}, \binits{A.}}:
		\bbtitle{Computational Electromagnetism: Variational Formulations,
			Complementarity, Edge Elements}.
		\bpublisher{Academic Press},
		\blocation{Boston}
		(\byear{1998})
	\end{bbook}
	\endbibitem
	
	\bibitem[\protect\citeauthoryear{Griffiths}{2017}]{Grif17}
	\begin{bbook}
		\bauthor{\bsnm{Griffiths}, \binits{D.J.}}:
		\bbtitle{Introduction to Electrodynamics}.
		\bpublisher{Cambridge University Press},
		\blocation{Cambridge}
		(\byear{2017}).
		\doiurl{10.1017/9781108333511}
	\end{bbook}
	\endbibitem
	
	\bibitem[\protect\citeauthoryear{Jackson}{1999}]{Jack99}
	\begin{bbook}
		\bauthor{\bsnm{Jackson}, \binits{J.D.}}:
		\bbtitle{Classical Electrodynamics}.
		\bpublisher{John Wiley \& Sons},
		\blocation{New York}
		(\byear{1999})
	\end{bbook}
	\endbibitem
	
	\bibitem[\protect\citeauthoryear{Chill et~al.}{2023}]{ChiRS23}
	\begin{barticle}
		\bauthor{\bsnm{Chill}, \binits{R.}},
		\bauthor{\bsnm{Reis}, \binits{T.}},
		\bauthor{\bsnm{Stykel}, \binits{T.}}:
		\batitle{Analysis of a quasilinear coupled magneto-quasistatic model:
			solvability and regularity of solutions}.
		\bjtitle{J. Math. Anal. Appl.}
		\bvolume{523}(\bissue{2}),
		\bfpage{127033}
		(\byear{2023})
		\doiurl{10.1016/j.jmaa.2023.127033}
	\end{barticle}
	\endbibitem
	
	\bibitem[\protect\citeauthoryear{Reis and Stykel}{2023}]{ReiS23}
	\begin{barticle}
		\bauthor{\bsnm{Reis}, \binits{T.}},
		\bauthor{\bsnm{Stykel}, \binits{T.}}:
		\batitle{Passivity, port-hamiltonian formulation and solution estimates for a
			quasilinear coupled magneto-quasistatic system}.
		\bjtitle{Evol. Equ. Control Theory}
		\bvolume{12}(\bissue{4}),
		\bfpage{1208}--\blpage{1232}
		(\byear{2023})
		\doiurl{10.3934/eect.2023008}
	\end{barticle}
	\endbibitem
	
	\bibitem[\protect\citeauthoryear{Bartel et~al.}{2011}]{BartBS11}
	\begin{barticle}
		\bauthor{\bsnm{Bartel}, \binits{A.}},
		\bauthor{\bsnm{Baumanns}, \binits{S.}},
		\bauthor{\bsnm{Sch\"ops}, \binits{S.}}:
		\batitle{Structural analysis of electrical circuits including
			magnetoquasistatic devices}.
		\bjtitle{Appl. Numer. Math.}
		\bvolume{61}(\bissue{12}),
		\bfpage{1257}--\blpage{1270}
		(\byear{2011})
		\doiurl{10.1016/j.apnum.2011.08.004}
	\end{barticle}
	\endbibitem
	
	\bibitem[\protect\citeauthoryear{Cortes~Garcia et~al.}{2020}]{CGdGS20}
	\begin{barticle}
		\bauthor{\bsnm{Cortes~Garcia}, \binits{I.}},
		\bauthor{\bsnm{Gersem}, \binits{H.}},
		\bauthor{\bsnm{Sch\"ops}, \binits{S.}}:
		\batitle{A structural analysis of field/circuit coupled problems based on a
			generalised circuit element}.
		\bjtitle{Numer. Algor.}
		\bvolume{83},
		\bfpage{373}--\blpage{394}
		(\byear{2020})
		\doiurl{10.1007/s11075-019-00686-x}
	\end{barticle}
	\endbibitem
	
	\bibitem[\protect\citeauthoryear{Alonso~Rodr\'iguez and Valli}{2010}]{AloV10}
	\begin{bbook}
		\bauthor{\bsnm{Alonso~Rodr\'iguez}, \binits{A.}},
		\bauthor{\bsnm{Valli}, \binits{A.}}:
		\bbtitle{Eddy Current Approximation of {M}axwell Equations: Modeling,
			Simulation and Applications}.
		\bpublisher{Springer},
		\blocation{Heidelberg}
		(\byear{2010}).
		\doiurl{10.1007/978-88-470-1506-7}
	\end{bbook}
	\endbibitem
	
	\bibitem[\protect\citeauthoryear{Bossavit}{2001}]{Boss01}
	\begin{barticle}
		\bauthor{\bsnm{Bossavit}, \binits{A.}}:
		\batitle{"{S}tiff" problems in eddy-current theory and the regularization of
			{M}axwell's equations}.
		\bjtitle{IEEE Trans. Magn.}
		\bvolume{37}(\bissue{5}),
		\bfpage{3542}--\blpage{3545}
		(\byear{2001})
		\doiurl{10.1109/20.952657}
	\end{barticle}
	\endbibitem
	
	\bibitem[\protect\citeauthoryear{Clemens and Weiland}{2001}]{CleW01}
	\begin{barticle}
		\bauthor{\bsnm{Clemens}, \binits{M.}},
		\bauthor{\bsnm{Weiland}, \binits{T.}}:
		\batitle{Discrete electromagnetism with the finite integration technique}.
		\bjtitle{Prog. Electromagn. Res.}
		\bvolume{32},
		\bfpage{65}--\blpage{87}
		(\byear{2001})
		\doiurl{10.2528/PIER00080103}
	\end{barticle}
	\endbibitem
	
	\bibitem[\protect\citeauthoryear{Sch\"ops}{2011}]{Schoeps11}
	\begin{botherref}
		\oauthor{\bsnm{Sch\"ops}, \binits{S.}}:
		Multiscale modeling and multirate time-integration of field/circuit coupled
		problems.
		{Ph.D.} thesis,
		Bergische Universit\"at Wuppertal
		(2011)
	\end{botherref}
	\endbibitem
	
	\bibitem[\protect\citeauthoryear{Manges and Cendes}{1995}]{ManC95}
	\begin{barticle}
		\bauthor{\bsnm{Manges}, \binits{J.B.}},
		\bauthor{\bsnm{Cendes}, \binits{Z.J.}}:
		\batitle{A generalized tree-cotree gauge for magnetic field computation}.
		\bjtitle{IEEE Trans. Magn.}
		\bvolume{31}(\bissue{3}),
		\bfpage{1342}--\blpage{1347}
		(\byear{1995})
		\doiurl{10.1109/20.376275}
	\end{barticle}
	\endbibitem
	
	\bibitem[\protect\citeauthoryear{Hiptmair}{2000}]{Hip00}
	\begin{barticle}
		\bauthor{\bsnm{Hiptmair}, \binits{R.}}:
		\batitle{Multilevel gauging for edge elements}.
		\bjtitle{Computing}
		\bvolume{64}(\bissue{2}),
		\bfpage{97}--\blpage{122}
		(\byear{2000})
		\doiurl{10.1007/s006070050005}
	\end{barticle}
	\endbibitem
	
	\bibitem[\protect\citeauthoryear{Clemens and Weiland}{2002}]{CleW02}
	\begin{barticle}
		\bauthor{\bsnm{Clemens}, \binits{M.}},
		\bauthor{\bsnm{Weiland}, \binits{T.}}:
		\batitle{Regularization of eddy-current formulations using discrete grad-div
			operators}.
		\bjtitle{IEEE Trans. Magn.}
		\bvolume{38}(\bissue{2}),
		\bfpage{569}--\blpage{572}
		(\byear{2002})
		\doiurl{10.1109/20.996149}
	\end{barticle}
	\endbibitem
	
	\bibitem[\protect\citeauthoryear{Schoenmaker et~al.}{2002}]{SchMM02}
	\begin{barticle}
		\bauthor{\bsnm{Schoenmaker}, \binits{W.}},
		\bauthor{\bsnm{Magnus}, \binits{W.}},
		\bauthor{\bsnm{Meuris}, \binits{P.}}:
		\batitle{Ghost fields in classical gauge theories}.
		\bjtitle{Phys. Rev. Lett.}
		\bvolume{88},
		\bfpage{181602}
		(\byear{2002})
		\doiurl{10.1103/PhysRevLett.88.181602}
	\end{barticle}
	\endbibitem
	
	\bibitem[\protect\citeauthoryear{Benner et~al.}{2021}]{MOR21}
	\begin{bbook}
		\beditor{\bsnm{Benner}, \binits{P.}},
		\beditor{\bsnm{Grivet-Talocia}, \binits{S.}},
		\beditor{\bsnm{Quarteroni}, \binits{A.}},
		\beditor{\bsnm{Rozza}, \binits{G.}},
		\beditor{\bsnm{Schilders}, \binits{W.}},
		\beditor{\bsnm{Silveira}, \binits{L.M.}} (eds.):
		\bbtitle{Model Order Reduction. Volume 1: System- and Data-Driven Methods and
			Algorithms; Volume 2: Snapshot-Based Methods and Algorithms; Volume 3:
			Applications}.
		\bpublisher{De Gruyter},
		\blocation{Berlin, Boston}
		(\byear{2021})
	\end{bbook}
	\endbibitem
	
	\bibitem[\protect\citeauthoryear{Kerler-Back and Stykel}{2017}]{KerBS17}
	\begin{barticle}
		\bauthor{\bsnm{Kerler-Back}, \binits{J.}},
		\bauthor{\bsnm{Stykel}, \binits{T.}}:
		\batitle{Model order reduction for linear and nonlinear magneto-quasistatic
			equations}.
		\bjtitle{Int. J. Numer. Meth. Engng}
		\bvolume{111}(\bissue{13}),
		\bfpage{1274}--\blpage{1299}
		(\byear{2017})
		\doiurl{10.1016/j.ifacol.2015.05.126}
	\end{barticle}
	\endbibitem
	
	\bibitem[\protect\citeauthoryear{Kerler-Back and Stykel}{2022}]{KerS22}
	\begin{bchapter}
		\bauthor{\bsnm{Kerler-Back}, \binits{J.}},
		\bauthor{\bsnm{Stykel}, \binits{T.}}:
		\bctitle{Balanced truncation model reduction for {3D} linear
			magneto-quasistatic field problems}.
		In: \beditor{\bsnm{Beattie}, \binits{C.}},
		\beditor{\bsnm{Benner}, \binits{P.}},
		\beditor{\bsnm{Embree}, \binits{M.}},
		\beditor{\bsnm{Gugercin}, \binits{S.}},
		\beditor{\bsnm{Lefteriu}, \binits{S.}} (eds.)
		\bbtitle{Realization and Model Reduction of Dynamical Systems - A~Festschrift
			in Honor of the 70th Birthday of Thanos Antoulas},
		pp. \bfpage{273}--\blpage{297}.
		\bpublisher{Springer},
		\blocation{Cham}
		(\byear{2022}).
		\doiurl{10.1007/978-3-030-95157-3_15}
	\end{bchapter}
	\endbibitem
	
	\bibitem[\protect\citeauthoryear{Montier et~al.}{2017}]{MonPHC17}
	\begin{barticle}
		\bauthor{\bsnm{Montier}, \binits{L.}},
		\bauthor{\bsnm{Pierquin}, \binits{A.}},
		\bauthor{\bsnm{Henneron}, \binits{T.}},
		\bauthor{\bsnm{Cl{\'e}net}, \binits{S.}}:
		\batitle{Structure preserving model reduction of low frequency electromagnetic
			problem based on {POD} and {DEIM}}.
		\bjtitle{IEEE Trans. Magn.}
		\bvolume{53}(\bissue{6}),
		\bfpage{1}--\blpage{4}
		(\byear{2017})
		\doiurl{10.1109/TMAG.2017.2663761}
	\end{barticle}
	\endbibitem
	
	\bibitem[\protect\citeauthoryear{Sato and Igarashi}{2016}]{SatCI16}
	\begin{barticle}
		\bauthor{\bsnm{Sato}, \binits{M.} \bsuffix{Y.~Clemens}},
		\bauthor{\bsnm{Igarashi}, \binits{H.}}:
		\batitle{Adaptive subdomain model order reduction with discrete empirical
			interpolation method for nonlinear magneto-quasi-static problems}.
		\bjtitle{IEEE Trans. Magn.}
		\bvolume{52}(\bissue{3}),
		\bfpage{1}--\blpage{4}
		(\byear{2016})
		\doiurl{10.1109/TMAG.2015.2489264}
	\end{barticle}
	\endbibitem
	
	\bibitem[\protect\citeauthoryear{Sch\"ops et~al.}{2013}]{SchoepsDGW13}
	\begin{barticle}
		\bauthor{\bsnm{Sch\"ops}, \binits{S.}},
		\bauthor{\bsnm{De~Gersem}, \binits{H.}},
		\bauthor{\bsnm{Weiland}, \binits{T.}}:
		\batitle{Winding functions in transient magnetoquasistatic field-circuit
			coupled simulations}.
		\bjtitle{COMPEL}
		\bvolume{32}(\bissue{6}),
		\bfpage{2063}--\blpage{2083}
		(\byear{2013})
		\doiurl{10.1108/COMPEL-01-2013-0004}
	\end{barticle}
	\endbibitem
	
	\bibitem[\protect\citeauthoryear{Girault and Raviart}{1986}]{GiraRavi86}
	\begin{bbook}
		\bauthor{\bsnm{Girault}, \binits{V.}},
		\bauthor{\bsnm{Raviart}, \binits{P.-A.}}:
		\bbtitle{Finite Element Methods for the Navier–Stokes Equations - Theory and
			Algorithms}.
		\bsertitle{Springer Series in Computational Mathematics},
		vol. \bseriesno{5}.
		\bpublisher{Springer},
		\blocation{Berlin Heidelberg}
		(\byear{1986}).
		\doiurl{10.1007/978-3-642-61623-5}
	\end{bbook}
	\endbibitem
	
	\bibitem[\protect\citeauthoryear{Anderson and Vongpanitlerd}{1973}]{AndeV73}
	\begin{bbook}
		\bauthor{\bsnm{Anderson}, \binits{B.D.O.}},
		\bauthor{\bsnm{Vongpanitlerd}, \binits{S.}}:
		\bbtitle{Network Analysis and Synthesis}.
		\bpublisher{Prentice Hall},
		\blocation{Englewood Cliffs, NJ}
		(\byear{1973})
	\end{bbook}
	\endbibitem
	
	\bibitem[\protect\citeauthoryear{Willems}{1972}]{Will72}
	\begin{barticle}
		\bauthor{\bsnm{Willems}, \binits{J.C.}}:
		\batitle{Dissipative dynamical systems part {I}: {G}eneral theory}.
		\bjtitle{Arch. Rational Mech. Anal.}
		\bvolume{45}(\bissue{5}),
		\bfpage{321}--\blpage{351}
		(\byear{1972})
		\doiurl{10.1007/BF00276493}
	\end{barticle}
	\endbibitem
	
	\bibitem[\protect\citeauthoryear{Br{\"u}ll}{2010}]{Brue2010}
	\begin{botherref}
		\oauthor{\bsnm{Br{\"u}ll}, \binits{T.}}:
		Dissipativity of {Linear Quadratic Systems}.
		PhD thesis,
		Technische Universit{\"a}t Berlin
		(2010)
	\end{botherref}
	\endbibitem
	
	\bibitem[\protect\citeauthoryear{Hill and Moylan}{1980}]{HillM80}
	\begin{barticle}
		\bauthor{\bsnm{Hill}, \binits{D.J.}},
		\bauthor{\bsnm{Moylan}, \binits{P.J.}}:
		\batitle{Dissipative dynamical systems: Basic input-output and state
			properties}.
		\bjtitle{J. Franklin Inst.}
		\bvolume{309}(\bissue{5}),
		\bfpage{327}--\blpage{357}
		(\byear{1980})
		\doiurl{10.1016/0016-0032(80)90026-5}
	\end{barticle}
	\endbibitem
	
	\bibitem[\protect\citeauthoryear{N\'ed\'elec}{1980}]{Ned80}
	\begin{barticle}
		\bauthor{\bsnm{N\'ed\'elec}, \binits{J.C.}}:
		\batitle{Mixed finite elements in {$R^3$}}.
		\bjtitle{Numer. Math.}
		\bvolume{35},
		\bfpage{315}--\blpage{341}
		(\byear{1980})
		\doiurl{10.1007/BF01396415}
	\end{barticle}
	\endbibitem
	
	\bibitem[\protect\citeauthoryear{Clemens et~al.}{2011}]{CleSDGB11}
	\begin{barticle}
		\bauthor{\bsnm{Clemens}, \binits{M.}},
		\bauthor{\bsnm{Sch\"ops}, \binits{S.}},
		\bauthor{\bsnm{De~Gersem}, \binits{H.}},
		\bauthor{\bsnm{Bartel}, \binits{A.}}:
		\batitle{Decomposition and regularization of nonlinear anisotropic curl-curl
			{DAE}s}.
		\bjtitle{COMPEL}
		\bvolume{30}(\bissue{6}),
		\bfpage{1701}--\blpage{1714}
		(\byear{2011})
		\doiurl{10.1108/03321641111168039}
	\end{barticle}
	\endbibitem
	
	\bibitem[\protect\citeauthoryear{Lamour et~al.}{2013}]{LamMT13}
	\begin{bbook}
		\bauthor{\bsnm{Lamour}, \binits{R.}},
		\bauthor{\bsnm{M\"arz}, \binits{R.}},
		\bauthor{\bsnm{Tischendorf}, \binits{C.}}:
		\bbtitle{Differential-Algebraic Equations: A Projector Based Analysis}.
		\bsertitle{Differential-Algebraic Equations Forum}.
		\bpublisher{Springer},
		\blocation{Berlin, Heidelberg}
		(\byear{2013}).
		\doiurl{10.1007/978-3-642-27555-5}
	\end{bbook}
	\endbibitem
	
	\bibitem[\protect\citeauthoryear{Chaturantabut and Sorensen}{2010}]{ChaS10}
	\begin{barticle}
		\bauthor{\bsnm{Chaturantabut}, \binits{S.}},
		\bauthor{\bsnm{Sorensen}, \binits{D.C.}}:
		\batitle{Nonlinear model reduction via discrete empirical interpolation}.
		\bjtitle{SIAM J. Sci. Comput.}
		\bvolume{32}(\bissue{5}),
		\bfpage{2737}--\blpage{2764}
		(\byear{2010})
		\doiurl{10.1137/090766498}
	\end{barticle}
	\endbibitem
	
	\bibitem[\protect\citeauthoryear{Dahlquist}{1959}]{Dahl1958}
	\begin{bbook}
		\bauthor{\bsnm{Dahlquist}, \binits{G.}}:
		\bbtitle{Stability and Error Bounds in the Numerical Integration of Ordinary
			Differential Equations}.
		\bsertitle{Transactions of the Royal Institute of Technology},
		vol. \bseriesno{130}.
		\bpublisher{Stockholm},
		\blocation{Sweden}
		(\byear{1959})
	\end{bbook}
	\endbibitem
	
	\bibitem[\protect\citeauthoryear{S{\"o}derlind}{2006}]{Soed2006}
	\begin{barticle}
		\bauthor{\bsnm{S{\"o}derlind}, \binits{G.}}:
		\batitle{The logarithmic norm. {History} and modern theory}.
		\bjtitle{BIT}
		\bvolume{46},
		\bfpage{631}--\blpage{652}
		(\byear{2006})
		\doiurl{10.1007/s10543-006-0069-9}
	\end{barticle}
	\endbibitem
	
	\bibitem[\protect\citeauthoryear{Pachpatte}{1998}]{Pach98}
	\begin{bbook}
		\bauthor{\bsnm{Pachpatte}, \binits{B.G.}}:
		\bbtitle{Inequalities for Differential and Integral Equations}.
		\bpublisher{Academic Press},
		\blocation{San Diego}
		(\byear{1998})
	\end{bbook}
	\endbibitem
	
\end{thebibliography}
\end{document}